\documentclass[12pt]{article}

\usepackage[T1]{fontenc}
\usepackage{lmodern}
\usepackage{amsmath,amssymb,amsthm,mathtools}
\usepackage[margin=1in]{geometry}
\usepackage{microtype}
\usepackage{needspace}
\usepackage[hidelinks]{hyperref}
\hypersetup{
  pdftitle={Stability of isometries of quantum states},
  pdfauthor={Sam Looi}
}

\usepackage{setspace}
\numberwithin{equation}{section}

\newtheorem{theorem}{Theorem}[section]
\newtheorem{lemma}[theorem]{Lemma}

\newtheorem{corollary}[theorem]{Corollary}
\theoremstyle{definition}
\newtheorem*{stabilityproblem}{Hyers--Ulam stability problem}
\theoremstyle{remark}
\newtheorem{remark}[theorem]{Remark}

\newcommand{\C}{\mathcal C}
\newcommand{\St}{\mathcal S}
\newcommand{\Tr}{\operatorname{Tr}}
\newcommand{\Fid}{\operatorname{F}}
\newcommand{\db}{d_{\mathrm b}}
\newcommand{\done}{d_1}

\title{Stability of isometries of quantum states}
\author{Sam Looi}
\date{August 29, 2026}

\begin{document}

\maketitle \begin{abstract} 
We prove dimension-independent Hyers--Ulam stability for surjective approximate isometries of the positive trace-class cone in both the trace and Bures metrics. Here $\varepsilon\geq0$ is a uniform upper bound on the additive error in preserving distances. Such an approximate isometry is uniformly approximated by a Wigner symmetry (a unitary or antiunitary conjugation), with error at most $9\varepsilon/2$ in the trace metric and $2\sqrt2\varepsilon$ in the Bures metric, even if it does not fix zero. For maps fixing zero, the trace bound improves to $3\varepsilon$ even under the hypothesis of $\delta$-surjectivity, independently of $\delta$. In infinite dimensions, we construct bijections of the full state space, including mixed states, whose distortions tend to zero in both metrics but which stay a fixed uniform distance from every Wigner symmetry. For state spaces, stability holds in each fixed finite dimension with a dimension-dependent modulus tending to zero as $\varepsilon\to0$, but no such modulus can be chosen independently of dimension.
\end{abstract}

\section{Introduction} Let $H$ be a nonzero complex Hilbert space. In quantum mechanics, bounded observables are represented by bounded self-adjoint operators on $H$, while normal states are represented by positive trace-class operators on $H$ with trace one. Several distances (that is, metrics) between quantum states have been studied in connection with different physical problems; Hadjisavvas \cite{Had} discusses the metrics of Jauch, Misra and Gibson, Gudder, Cantoni and Wootters. The definitions of each of these metrics may look very different, but each distance can be obtained from either the Bures or trace distance by a simple formula \cite{Had,MT}. This motivates our consideration of the Bures and trace distances in this article.

Moln\'ar and Timmermann \cite{MT} considered four isometry problems: the positive cone and the state space, each equipped with either the Bures or trace metric. The purpose of this paper is to study stability in these four cases. We show that both cone cases admit estimates which do not depend on the dimension. For the Bures metric the bound is $2\sqrt2 \varepsilon$, without assuming that zero is preserved, while for the trace metric the bound is $9\varepsilon/2$, and it improves to $3\varepsilon$ when zero is preserved. In the latter case, the same bound holds under $\delta$-surjectivity for any finite $\delta$, as defined below. For the two state space cases, stability fails in infinite dimension and no modulus tending to zero can be uniform in the dimension. In each fixed finite dimension, however, stability holds with a dimension-dependent modulus tending to zero.

\subsection{The spaces and their metrics}

Given a bounded operator $A$ on $H$, let $A^*$ denote its adjoint. We write $A\geq0$ if $A$ is self-adjoint and $\langle Ax,x\rangle\geq0$ for every $x\in H$, and $A\leq B$ if $B-A\geq0$. For $A\geq0$, $A^{1/2}$ will denote its unique positive square root.

We recall some notation. If $A\geq0$ and $(e_j)$ is an orthonormal basis, then $\Tr A=\sum_j\langle Ae_j,e_j\rangle$; this value does not depend on the basis. An operator $X$ is trace-class if $\Tr|X|<\infty$, where $|X|=(X^*X)^{1/2}$. The space of these operators is $S_1(H)$, and $S_1(H)_+$ denotes its positive cone. For $X\in S_1(H)$,
\[
 \|X\|_1=\Tr|X|.
\]
For a bounded operator $X$ on $H$, its operator norm is
\[
 \|X\|_\infty=\sup_{\|x\|=1}\|Xx\|.
\]
For a trace-class operator $X$, let $s_j(X)$ denote its singular values, that is, the eigenvalues of $|X|$ counted with multiplicity. Then
\[
 \|X\|_1=\sum_j s_j(X),
 \qquad
 \|X\|_\infty=\sup_j s_j(X).
\]
Thus the subscripts $1$ and $\infty$ correspond to the $\ell^1$ and $\ell^\infty$ norms of the singular-value sequence. We put
\[
 \C=S_1(H)_+,
 \qquad
 \St=\{A\in\C:\Tr A=1\}.
\]
Thus $\C$ is the positive trace-class cone and $\St$ is its state space. For positive trace-class operators, $\|X\|_1=\Tr X$. The set $\C$ is a cone because it is closed under addition and multiplication by nonnegative scalars. The state space $\St$ is its trace-one part.

For $A,B\in\C$, we make the following definitions
\begin{equation}
 \begin{aligned}
 \db(A,B)^2&:=\Tr A+\Tr B
 -2\Tr \left[(A^{1/2}BA^{1/2})^{1/2}\right],\\
 \done(A,B)&:=\|A-B\|_1.
 \end{aligned}
 \label{eq:bures-definition}
\end{equation}
The elements of $\St$ are the density operators, also called quantum states.

For operators diagonal in the same orthonormal basis, say $A=\operatorname{diag}(a_j)$ and $B=\operatorname{diag}(b_j)$, these formulas become
\[
 \db(A,B)^2=\sum_j(\sqrt{a_j}-\sqrt{b_j})^2,\qquad
 \done(A,B)=\sum_j|a_j-b_j|.
\]
Thus, in this case, Bures distance is the Hilbert space distance between the square root sequences, while trace distance is the $\ell^1$ distance between the original sequences.

We study the Bures and trace metrics, the two metrics in the classification of Moln\'ar and Timmermann.

\subsection{The stability problem and main results}

Recall that an isometry is a map that preserves every distance, while a unitary is a linear surjection $U:H\to H$ satisfying $\langle Ux,Uy\rangle=\langle x,y\rangle$. A Wigner symmetry is defined to be a map $A\mapsto UAU^*$ with the map $U$ either unitary or antiunitary.

One can equip either the positive trace-class cone or the state space with either the Bures metric or the trace metric, giving a total of four metric spaces. The basic question is whether, for each of the four metric spaces, every bijective isometry is a Wigner symmetry. As reported by Moln\'ar and Timmermann \cite{MT}, Uhlmann asked for the classification of Bures isometries of the state space and the positive trace-class cone, and for their relation to the symmetry transformations of quantum mechanics. Moln\'ar and Timmermann \cite[Theorems~1--4]{MT} classified the bijective isometries for both the Bures and trace metrics, on both the positive trace-class cone and the state space. Their results answer Uhlmann's Bures isometry questions and establish the corresponding classifications for the trace metric: in all four settings, the bijective isometries are exactly the Wigner symmetries. An antiunitary is a conjugate linear surjection $U:H\to H$ satisfying $\langle Ux,Uy\rangle=\overline{\langle x,y\rangle}$. Although $U$ is not linear, $UAU^*$ is a linear operator whenever $A$ is, and this conjugation preserves positivity and the trace of positive operators.

\begin{theorem}[Moln\'ar-Timmermann]\label{thm:MT} For each of the four metric spaces
\[
 (\C,\db),\qquad(\C,\done),\qquad(\St,\db),\qquad(\St,\done),
\]
every bijective isometry is of the form
\begin{equation}
 A\mapsto UAU^*,
 \label{eq:wigner-form}
\end{equation}
where $U$ is unitary or antiunitary on $H$. \end{theorem}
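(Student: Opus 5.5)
Our plan is to reduce all four cases to Wigner's theorem on the projective space of rank-one projections, which we take as known. For a bijective isometry $\phi$ we first identify metrically, in each of the four spaces, a distinguished set that $\phi$ must preserve: the pure states (rank-one projections) in $\St$, or the rank-one elements $tP$ in $\C$. We then show that $\phi$ restricted to this set preserves transition probabilities $\Tr PQ$. Wigner's theorem then yields a unitary or antiunitary $U$ with $\phi(P)=UPU^*$ for pure $P$. Composing $\phi$ with $A\mapsto U^*AU$, we may assume $\phi$ fixes every pure state (respectively every rank-one element), and it remains to show that $\phi$ is the identity.

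\textbf{Characterizing rank-one elements.} In $(\St,\done)$ we use that $\St$ has diameter $2$, and that $\|A-B\|_1=2$ exactly when $A\perp B$, i.e.\ $AB=0$. A state $A$ is pure iff the set of states orthogonal to it is maximal; equivalently, pure states are those with the most orthogonal partners, and we can express ``$A\perp B$'' as $\done(A,B)=2$. In $(\St,\db)$ we have $\db(A,B)^2=2-2\Fid(A,B)$, and $\db$ is maximal ($=\sqrt2$) iff $A\perp B$, so the same characterization applies. For pure $P,Q$ we have $\Fid(P,Q)=\sqrt{\Tr PQ}$ and $\|P-Q\|_1=2\sqrt{1-\Tr PQ}$, so transition probabilities are preserved. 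In the cones we first show $\phi(0)=0$; in $(\C,\done)$ zero is the point with $\done(0,A)=\tfrac12(\done(0,B)+\ldots)$, but it is simpler to characterize the trace as a metric quantity via the additivity $\done(A,B)=\Tr A+\Tr B$ iff $A\perp B$. We then characterize rays and rank one through orthogonal decomposability: $A$ has rank one iff it cannot be written as the metric ``sum'' of two nonzero orthogonal pieces. In $(\C,\db)$ the analogous fact is $\db(A,B)^2=\Tr A+\Tr B$ iff $A\perp B$. The scaling $t$ is recovered from distances to $0$.

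\textbf{Extending from rank one to everything.} This is the main obstacle. Once $\phi$ fixes every $tP$ (respectively every pure $P$), we must recover a general $A$ from its distances to rank-one elements. For the Bures metric, $\db(A,tP)^2=\Tr A+t-2\sqrt{t\langle Ax,x\rangle}$ with $P=x\otimes x$, so the distances determine all diagonal values $\langle Ax,x\rangle$ and hence $A$ by polarization. In the state case we use pure $P$ only, and $\Fid(A,P)^2=\langle Ax,x\rangle$ again determines $A$. For the trace metric we use $\|A-tP\|_1$ and look at its behavior as $t\to\infty$ or $t\to0$. Alternatively, in the state space, $\|A-P\|_1$ is an explicit function of $\langle Ax,x\rangle$ and spectral data, which recovers the quadratic form $\langle Ax,x\rangle$; we would verify this via a rank-two perturbation computation of $\|A-P\|_1=2\lambda_+$, where $\lambda_+$ is the positive eigenvalue of $P-A$, and show that $\lambda_+$ determines $\langle A x,x\rangle$ given $A$'s distances to all pure states. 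If that is awkward, we would instead note that the identity on pure states forces $\phi$ to preserve orthogonality and convex combinations of orthogonal pure states, and then conclude via spectral decomposition.
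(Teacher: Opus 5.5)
\textbf{Verdict: two genuine gaps.} The overall Wigner-reduction architecture is sound, but zero-preservation in the cones is never established, and the final step for $(\St,\done)$ is not carried out. Note that the paper does not prove this statement: it quotes it from Moln\'ar and Timmermann \cite{MT} and uses it as a black box. So your outline has to stand on its own.

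The architecture is the natural one: metrically characterize pure or rank-one elements, apply Wigner's theorem, then show that a map fixing all of them is the identity. Several pieces are correct.
On $\St$, orthogonality is exactly maximal distance, and purity is inclusion-maximality of $\{B:B\perp A\}$.
The formulas $\Fid(P,Q)=\sqrt{\Tr PQ}$ and $\|P-Q\|_1=2\sqrt{1-\Tr PQ}$ put you in position to apply Wigner's theorem.
The identity $\Fid(A,P_x)^2=\langle Ax,x\rangle$ finishes both Bures cases.
On $(\C,\done)$, the limit $\|tP_x-A\|_1-t\to\Tr A-2\langle Ax,x\rangle$ (the paper's Lemma~\ref{lem:trace-busemann}) recovers $A$.
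The last two points, in the cone cases, hold only once $\phi(0)=0$ is known.

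\textbf{Zero in the cones.} Every device you propose for the cones reads $\Tr A$ as $\done(A,0)=\db(A,0)^2$. This includes $\done(A,B)=\Tr A+\Tr B$ or $\db(A,B)^2=\Tr A+\Tr B$ iff $A\perp B$, and ``the scaling is recovered from distances to $0$''. That quantity is transported by $\phi$ only if $\phi(0)=0$, which is exactly what has not been shown. The sentence about $\done(0,A)=\tfrac12(\done(0,B)+\ldots)$ is not an argument. Surjectivity must genuinely enter here, since $A\mapsto VAV^*+P$ in Remark~\ref{rem:range-assumption} is an isometric embedding that moves $0$.

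For $\db$ the fix is short. If $A,B\neq0$, then $\db(A,B)^2\le\Tr A+\Tr B<(\sqrt{\Tr A}+\sqrt{\Tr B})^2$, so $0$ is never strictly between two other points. By contrast, $Z\neq0$ is strictly between $0$ and $4Z$, because $\db(Z,4Z)=\sqrt{\Tr Z}$.

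For $\done$ there is no such cheap test, because $0$ lies metrically between any two orthogonal elements. One workable route:
\begin{itemize}
\item $Z$ and $B\neq Z$ have a unique metric midpoint iff $B-Z$ has rank one.
\item $0$ is the only $Z$ such that every such $B$ lies strictly between $Z$ and another such $B'$.
\item For $Z\neq0$ with $Zx=\lambda x$ and $\lambda=\|Z\|_\infty$, the point $B=Z-\lambda P_x$ admits no such $B'$.
\end{itemize}
Once $\phi(0)=0$ is known, characterize rank one exactly as on $\St$, by inclusion-maximality of $\{B:B\perp A\}$. Your ``orthogonal decomposability'' criterion is then unnecessary, and for $\db$ its obvious metric encoding is unclear.

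\textbf{Trace metric on $\St$.} Here the extension from pure states is not actually carried out.

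The claim that $\lambda_+$ ``determines $\langle Ax,x\rangle$'' is false pointwise. In $\mathbb C^2$, $A=I/2$ and $A'=P_{(e_1+e_2)/\sqrt2}$ agree at $x=e_1$, yet $\|A-P_{e_1}\|_1=1\neq\sqrt2=\|A'-P_{e_1}\|_1$. Read globally, the claim just restates the goal.

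The fallback is also unsupported. Metric segments between orthogonal pure states do show that $\phi$ fixes $sP+(1-s)Q$. But nothing in your outline extends this to mixtures of three or more orthogonal pure states, or shows that $\phi(A)$ shares the eigenvectors of $A$.

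What closes the step is a duality formula. Since $P_x-A$ has trace zero and at most one positive eigenvalue,
\[
 \tfrac12\|A-P_x\|_1=\lambda_{\max}(P_x-A)=\max_{\|y\|=1}\bigl(|\langle x,y\rangle|^2-\langle Ay,y\rangle\bigr).
\]
Conversely,
\[
 \langle Ay,y\rangle=\sup_{\|x\|=1}\bigl(|\langle x,y\rangle|^2-\tfrac12\|A-P_x\|_1\bigr).
\]
The bound $\langle Ay,y\rangle\ge\sup$ follows at once from the first formula. For attainment, proceed as follows.
\begin{itemize}
\item Set $\alpha=\langle Ay,y\rangle$ and $\beta=\|Ay\|^2$; note $\beta\le\alpha$ because $A^2\le A$.
\item Let $\lambda\ge0$ be the larger root of $\lambda^2+(2\alpha-1)\lambda+\beta-\alpha=0$.
\item Put $x=(A+\lambda)y/\|(A+\lambda)y\|$.
\item Then $\|(A+\lambda)y\|^2=\alpha+\lambda$ and $(P_x-A)y=\lambda y$, so $y$ is a top eigenvector of $P_x-A$.
\end{itemize}
Hence the trace distances from a state to all pure states determine its quadratic form. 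A trace-norm isometry fixing every pure state is therefore the identity.
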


Throughout the paper, and in particular in our theorems, no linearity, affinity or continuity assumptions whatsoever on the approximate isometries are assumed.

We consider additive errors: let $\varepsilon\geq0$, and let $d$ be either $\db$ or $\done$. A map $S:\C\to\C$ is called an $\varepsilon$-isometry if
\begin{equation}
 \bigl|d(S(A),S(B))-d(A,B)\bigr|\leq\varepsilon,
 \qquad A,B\in\C.
 \label{eq:eps-isometry}
\end{equation}
The $\varepsilon$-isometry $S$ is called standard if $S(0)=0$. On $\St$, where zero is absent, we only require the distance inequality. A map into a metric space is $\delta$-surjective if every point of that space lies within distance $\delta$ of some value of the map (hence surjectivity is the case $\delta=0$). We use the same terminology for maps between Banach spaces with their norm metrics.

We study quantitative versions of Theorem~\ref{thm:MT}. \begin{stabilityproblem} Fix one of the four metric spaces $(X,d)$ above. We ask whether there is a constant $K<\infty$ such that, for every $\varepsilon\geq0$ and every surjective $\varepsilon$-isometry $T:X\to X$, there is a unitary or antiunitary $U$ on $H$ satisfying
\[
 \sup_{A\in X}d\bigl(T(A),UAU^*\bigr)\leq K\varepsilon.
\]
The constant $K$ may depend on the metric space, but must be independent of $\varepsilon$ and $T$; the same unitary or antiunitary $U$ must satisfy this error bound for every $A\in X$. \end{stabilityproblem}

Our cone results give such bounds with constants independent of dimension, without requiring the maps to fix zero.\footnote{The operator $U$ acts on the Hilbert space $H$, whereas the corresponding isometry on operators is $\Phi(A)=UAU^*$. For the cone equipped with the trace metric, $\Phi$ extends to a real-linear surjective isometry of the real Banach space $E=S_1(H)_{\mathrm{sa}}$ (``sa'' for self-adjoint), equipped with the trace norm. The estimate in that case would be $\|T(A)-\Phi(A)\|_1\leq K\varepsilon$ for every $A\in E_+$. Thus $\Phi$ plays the role of the linear approximating isometry in Banach space stability. The domain of $T$, however, is only the positive cone $E_+$, so this is a positive cone version of that problem.}

For the state space $\mathcal{S}$, we also consider the weaker stability question obtained by replacing $K\varepsilon$ with a function $\omega(\varepsilon)\to0$ as $\varepsilon\to0$, again independent of the particular map. Even this weaker stability fails in infinite dimension. It holds in each fixed finite dimension, but no such function can be chosen independently of dimension. Thus our answer to the Hyers--Ulam stability problem on the cone is uniform in the dimension and the constants are explicit, while on the state space the dependence on the dimension is unavoidable, for both metrics.

Our cone arguments use zero as a reference point and in particular, the trace can be recovered from the Bures distance to zero: $\Tr A=\db(A,0)^2.$ These cone arguments also use arbitrarily large scalar multiples. Neither zero nor these large scalar multiples belong to the state space, whose elements all have trace one (Section~\ref{sec:spheres}).

We first prove the result for the Bures metric on the cone.

\begin{theorem}\label{thm:bures-main} Let $S:(\C,\db)\to(\C,\db)$ be a surjective $\varepsilon$-isometry. There is a unitary or antiunitary $U$ on $H$ such that
\begin{equation}
 \db\bigl(S(A),UAU^*\bigr)\leq2\sqrt2 \varepsilon,
 \qquad A\in\C.
 \label{eq:bures-main}
\end{equation}
Moreover,
\begin{equation}
 \db(S(0),0)\leq\frac{1+\sqrt7}{2}\varepsilon.
 \label{eq:bures-zero}
\end{equation}
If $S(X)=0$, then $\db(X,0)\leq\frac{1+\sqrt7}{2}\varepsilon$ as well. Neither $S(0)=0$ nor injectivity is assumed. \end{theorem}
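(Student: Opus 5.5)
The plan is to reduce the Bures problem on $\C$ to a Hyers--Ulam problem for an isometry of a real Hilbert space. The reduction uses the square-root map $A\mapsto A^{1/2}$, which sends $\C$ onto the positive cone of Hilbert--Schmidt operators. The model case is commuting operators: by the diagonal formula, $\db(A,B)=\|A^{1/2}-B^{1/2}\|_2$ there. For general $A,B$ one has $\db(A,B)=\min_V\|A^{1/2}-B^{1/2}V\|_2$ over unitaries $V$, so the Bures distance is a quotient of Hilbert--Schmidt distance and is not Euclidean on the nose.

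\textbf{Step 1 (zero and norms).} We first control $S(0)$ and preimages of $0$.
\begin{itemize}
\item The trace is recovered as $\Tr A=\db(A,0)^2$, and $\db(tA,sA)=|\sqrt t-\sqrt s|\,\db(A,0)$ on rays.
\item Let $Z=S(0)$ and let $X$ be a preimage of $0$, which exists by surjectivity.
\item Compare $\db(S(A),Z)$ with $\db(A,0)$ and $\db(S(A),0)$ with $\db(A,X)$, each within $\varepsilon$.
\item Take $A=tP$ with $P$ a rank-one projection orthogonal to the supports, and use $\db(A,B)^2\ge\Tr A+\Tr B-2\sqrt{\Tr A\,\Tr B}$ together with the exact orthogonal formula $\db(A,B)^2=\Tr A+\Tr B$.
\end{itemize}
This should yield a quadratic inequality $r^2\le r\varepsilon+\tfrac32\varepsilon^2$ for $r=\db(Z,0)$, whose root is $\tfrac{1+\sqrt7}{2}\varepsilon$. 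That gives \eqref{eq:bures-zero}, and symmetrically the bound on $X$. I am guessing the exact form of the inequality from the constant. The idea is that large scalar multiples amplify any displacement of zero.

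\textbf{Step 2 (asymptotic rescaling).} Define $\Phi(A)=\lim_n n^{-1}S(n^2A)$ along a subsequence or ultrafilter. We need $\db(\lambda^2A,\lambda^2B)=\lambda\,\db(A,B)$, which holds since Bures is homogeneous of degree $1/2$ under $A\mapsto\lambda^2A$. Then $\Phi$ is an exact isometry fixing $0$. The difficulty is that in infinite dimensions the limit has to be taken weakly or in an ultrapower. To stay in $\C$, I would use trace control from Step 1, which gives $\Tr S(n^2A)$ within $O(n\varepsilon)$ of $n^2\Tr A$, combined with a Fuglede--Kadison or Omladi\v c--\v Semrl type argument.

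Surjectivity of $\Phi$ comes from surjectivity of $S$ by the same limit applied to preimages. Theorem~\ref{thm:MT} then gives $\Phi(A)=UAU^*$.

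\textbf{Step 3 (uniform bound), the main obstacle.} We must show $\db(S(A),UAU^*)\le 2\sqrt2\varepsilon$ for every $A$, not just asymptotically. I would compose with the inverse Wigner symmetry, so we may assume $U=I$; then $S$ is an $\varepsilon$-isometry with $S(tA)\approx tA$ at large scales.

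For fixed $A$ and a large orthogonal test element $B$, compare $\db(S(A),S(B))$ with $\db(A,B)$. Expanding squares, the large terms cancel and one is left with fidelity quantities $\Tr(A^{1/2}BA^{1/2})^{1/2}$. Since the test elements range over a set rich enough to determine $A$, for example all $n^2P$ with $P$ a rank-one projection, the fidelity of $S(A)$ against every pure state is pinned down. Converting fidelity-with-pure-states control into Bures control is where the constant is produced:
\begin{itemize}
\item Each one-sided $\varepsilon$ slack contributes an error of order $2\varepsilon^2$ to $\db^2$ in a parallelogram-type identity.
\item Two such contributions give $\db^2\le 8\varepsilon^2$, that is, $2\sqrt2\varepsilon$.
\end{itemize}

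The hard point is making this step dimension-free and rigorous for non-commuting $S(A)$ and $A$. The approach I would try first is to pass to purifications. Write $\db(A,B)=\min\|\xi_A-\xi_B\|$ over vector purifications $\xi_A$ of $A$ and $\xi_B$ of $B$. The estimate then becomes a Hilbert-space statement, which can be handled with the classical Hyers--Ulam argument for approximate isometries of a Hilbert space.
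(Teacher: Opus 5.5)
Your skeleton matches the paper's: rescale by $t^2$, pass to an exact isometry $\Phi$ fixing $0$, show $\Phi$ is onto using preimages, apply Theorem~\ref{thm:MT}, and read off fidelities from distances to far points. But each step, as you describe it, has a gap.

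\textbf{Step~1 cannot work as designed.} Take $A=tP$ with $P$ orthogonal to the supports of $Z=S(0)$ and $X$. Every comparison you list, and also those among $0$, $X$ and the points $tP$, holds with zero error for every value of $r$ if $S(tP)=tP'+Z$, with $P'$ a rank one projection orthogonal to $Z$, and $\Tr X=\Tr Z$. This is the pattern of the embedding in Remark~\ref{rem:range-assumption}. So these comparisons cannot bound $r$, and your quadratic is only guessed.

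In the paper the zero estimates come last. Comparing the pairs $(t^2B,0)$ and $(t^2B,X_0)$ and passing to the limit with Lemma~\ref{lem:bures-busemann} gives $\Fid(C,S(0))+\Fid(C,\Phi(X_0))\leq2\varepsilon$ for every state $C$. Surjectivity of $\Phi$ then lets one take $C$ proportional to $S(0)+\Phi(X_0)$, i.e.\ \emph{aligned with} the displaced points rather than orthogonal to them. With $a=\db(S(0),0)$ and $b=\db(X_0,0)$ this gives $\sqrt{a^2+b^2}\leq2\varepsilon$, which together with $|a-b|\leq\varepsilon$ is your quadratic.

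\textbf{Step~2.} The normalization must be $n^{-2}S(n^2A)$; with $n^{-1}$ the traces grow like $n$. Weak or ultrapower limits are not a workable substitute. A weak$^*$ limit can lose trace, so it need not preserve $\db$ (recall $\db(\cdot,0)^2=\Tr$). An ultrapower limit does not lie in $\C$, so Theorem~\ref{thm:MT} would not apply. The missing idea is that the blow-down converges in the Bures metric itself along $t=2^n$. Images of points on a ray stay nearly collinear with $0$ in the sense of \eqref{eq:angular-excess}. The identity \eqref{eq:cone-algebra} turns this into the summable bound \eqref{eq:dyadic-increment}, and completeness (Lemma~\ref{lem:bures-complete}) puts the limit in $\C$.

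\textbf{Step~3: the conversion you propose fails.} Fidelities against pure states do not control the Bures distance independently of dimension. Let $P_1,P_2$ be orthogonal rank $n$ projections, $A=P_1/n$ and $C=P_2/n$. Then $|\Fid(P_x,A)-\Fid(P_x,C)|\leq n^{-1/2}$ for every unit vector $x$, while $\db(A,C)=\sqrt2$.

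You need far points $t^2B$ with $B$ ranging over \emph{all} states, and then the profile inequality $\db(A,C)\leq\sqrt2\,\Delta(A,C)$ of Lemma~\ref{lem:profile}. That lemma is proved with the Bures--Wasserstein transport operator $T$ and the states $R_+AR_+/p$ and $R_-AR_-/q$ built from the positive and negative parts of $I-T$; these are in general mixed. The lemma is sharp by its orthogonal projection example, and it is where $\sqrt2$ comes from, not a parallelogram identity.

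A single comparison of $\db(S(A),S(t^2B))$ with $\db(A,t^2B)$ also does not cancel the growing terms, because $\sqrt{\Tr S(t^2B)}-t\sqrt{\Tr B}$ need not tend to $0$. For example, the surjective map $A\mapsto(\sqrt{\Tr A}-c)_+^2\,A/\Tr A$ (with $0\mapsto0$) moves each point by at most $c$, so it is a $2c$-isometry, yet this offset equals $-c$. The paper instead compares two distances from the same far point. Writing $e_D(B)=\Fid(\Phi(B),S(D))-\Fid(B,D)$, this gives $|e_D(B)-e_E(B)|\leq2\varepsilon\sqrt{\Tr B}$. It then uses the anchors $E=0$, where $e_0\geq0$, and $E=X_0$, where $e_{X_0}\leq0$, to obtain $|e_D(B)|\leq2\varepsilon\sqrt{\Tr B}$; that is the factor $2$.

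\textbf{The purification fallback.} This does not work either. $\db$ is the quotient of the Hilbert--Schmidt distance on amplitudes by the right unitary action. An $\varepsilon$-isometry of $(\C,\db)$ gives no control of Hilbert--Schmidt distances between chosen amplitudes of its values, which also depend on relative phases that $S$ does not see. So there is no Hilbert space map to which the classical Hyers--Ulam theorem could be applied.
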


After that, we prove the corresponding result for the trace metric.

\begin{theorem}\label{thm:trace-global} Let $T:(\C,\done)\to(\C,\done)$ be a surjective $\varepsilon$-isometry. There is a unitary or antiunitary $U$ on $H$ such that
\begin{equation}
 \|T(A)-UAU^*\|_1\leq\frac92\varepsilon,
 \qquad A\in\C.
 \label{eq:trace-global}
\end{equation}
If $T(0)=0$, the right side can be replaced by $3\varepsilon$. The estimate \eqref{eq:trace-global} requires neither injectivity nor $T(0)=0$. In addition, every $X\in\C$ with $T(X)=0$ satisfies
\begin{equation} \Tr T(0)+\Tr X\leq3\varepsilon. \label{eq} \end{equation}
\end{theorem}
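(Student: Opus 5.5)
We would reduce to the standard case and then use Banach-space stability on $E=S_1(H)_{\mathrm{sa}}$. First, we control $T(0)$ and preimages of zero. Surjectivity gives some $X$ with $T(X)=0$. For any $A\in\C$, positivity gives $\|A-B\|_1\geq|\Tr A-\Tr B|$, with equality when $A\ge B$ or $B\ge A$. We compare distances from $0$ and from $X$ along large multiples $tP$ of rank-one projections, and along preimages of such multiples. Write $\Tr T(0)=a$ and $\Tr X=b$. For a preimage $Y$ of $tP$ with $t$ large, we have $\|Y\|_1\approx t$ up to $a+\varepsilon$, and $\|Y-X\|_1\approx t$ up to $\varepsilon$. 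Choosing $P$ so that its range is orthogonal (approximately, after a limiting argument in infinite dimension) to the supports, or using the triangle-type identity $\|Y-X\|_1\ge \Tr Y+\Tr X-2\Tr(\text{overlap})$, should give $a+b\le 3\varepsilon$; this is inequality \eqref{eq}.

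Next, we define the standard map $T_0(A)=T(A)$ for $A\neq0$ and $T_0(0)=0$. By \eqref{eq}, the map $T_0$ is a $(\varepsilon+a)$-isometry, but this is too lossy. Instead, we directly prove the $3\varepsilon$ bound for standard $T$, and for general $T$ we pass to the standard map at cost: $\|T(A)-T_0(A)\|$ only matters at $0$. We also check that $T_0$ is still $\delta$-surjective and an $\varepsilon'$-isometry. Bookkeeping $a\le 3\varepsilon/2$ from a symmetric refinement of step one should turn $3\varepsilon$ into $9\varepsilon/2$. So the general bound will be $3\varepsilon+\tfrac32\varepsilon$, with the extra term coming from the shift of zero.

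For standard $T$, the key point is that traces are nearly preserved, since $|\Tr T(A)-\Tr A|\le\varepsilon$. Moreover, $\|T(A)-T(B)\|_1$ nearly equals $\Tr A+\Tr B$ exactly when $A\perp B$. We extend $T$ to all of $E$ via $F(A-B)$-type constructions. More concretely, we use the Omladič–Šemrl/Dilworth-style result that a $\delta$-surjective standard $\varepsilon$-isometry between Banach spaces lies within $2\varepsilon$ of a linear surjective isometry, after restricting attention to the cone. On $E$, the linear surjective isometries of $(S_1)_{\mathrm{sa}}$ are $\pm$ Wigner maps, or more general maps of the form $A\mapsto UAU^*$ or $UA^{t}U^*$ composed with a sign. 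Positivity of $T$ on the cone, together with near-trace preservation, excludes the minus sign.

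The main obstacle is that $T$ is defined only on the cone $E_+$, not on $E$, so the Banach space theorem does not apply directly. Our first attempt would be the Gevirtz/Šemrl asymptotic approach. We define $L(A)=\lim_{n\to\infty} T(2^nA)/2^n$ along a Banach limit or weak$^*$ limit. Using large multiples, we show that $L$ is an exact isometry of $\C$ for $\done$ which fixes $0$ and is surjective. Then Theorem~\ref{thm:MT} gives $L(A)=UAU^*$. Finally, we use a Lindenstrauss–Szankowski or Figiel-type argument to bound $\|T(A)-L(A)\|_1\le 3\varepsilon$. This last step compares $T(A)$ with $T(tB)$ for large $t$ and uses the near-additivity of distances along the orthogonal directions that witness $\|\cdot\|_1$.
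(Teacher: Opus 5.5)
There is a genuine gap. The two steps that carry all the weight are only named, and the tools you name do not apply. Omladi\v{c}--\v{S}emrl and \v{S}emrl--V\"ais\"al\"a need a map defined on all of $S_1(H)_{\mathrm{sa}}$, which, as you note yourself, you do not have.

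Your fallback, $L(A)=\lim_n T(2^nA)/2^n$, is exactly where the difficulty lies. For the Bures metric the paper proves the dyadic sequence is Cauchy via the cone identity \eqref{eq:cone-algebra}, but nothing analogous is available for the trace norm. A Banach or weak$^*$ limit does not rescue it. The trace norm is only weak$^*$ lower semicontinuous, and trace can escape to infinity, so such an $L$ is merely a contraction and need not preserve trace. You also give no argument for its surjectivity, so Theorem~\ref{thm:MT} cannot be invoked. The paper does not construct $L$ this way. It swaps the values of $T$ at $0$ and at a preimage $X_0$ of $0$ to obtain a standard surjective map, imports the finite operator norm bound of \cite[Theorem~3.15]{DLLZ}, and then upgrades operator norm convergence plus convergence of traces to trace norm convergence $S(nB)/n\to B$ (Lemma~\ref{lem:trace-scaling}).

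Likewise, your ``Figiel-type argument'' is the missing key idea. In Theorem~\ref{thm:trace-rigidity}:
\begin{itemize}
\item one writes $\|G\|_1=\Tr(JG)$, where $J$ is the sign symmetry of $G=(S(D)-S(E))-(D-E)$;
\item one chooses faithful positive probes $A,C$ whose supports are the $\pm1$ eigenprojections of $J$;
\item one shows that the sign symmetries $J_n$ of $S(nA)-S(nC)$ converge strongly to $J$ on the relevant separable subspace, and this is where trace norm convergence of $S(nA)/n$ is essential;
\item one bounds $\Tr J_n(S(D)-S(E))$ by $\|S(D)-S(nC)\|_1-\|S(nA)-S(nC)\|_1+\|S(nA)-S(E)\|_1$, using the $\varepsilon$-isometry inequality three times together with the Busemann limit of Lemma~\ref{lem:trace-busemann}.
\end{itemize}

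Your first two steps also fail as stated. Comparing $0$ and $X$ with $tP$, or with a preimage $Y$ of $tP$, only yields bounds on the difference $a-b$. To bound the sum $a+b$ you need to know how the unknown outputs ($T(tP)$, or the preimage $Y$) sit relative to $P_0$ or $X$, namely near orthogonality or near domination. You can only arrange that once the asymptotic form of $T$ is known. In the paper, \eqref{eq} is a consequence of the rigidity estimate with $D=X_0$, $E=0$.

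The reduction to the standard map $T_0$ is also too costly. $T_0$ is only an $(\varepsilon+a)$-isometry and only $a$-surjective, so a standard-case $3\varepsilon$ theorem gives roughly $3(\varepsilon+a)$. Nothing in your outline gives $a\le 3\varepsilon/2$: from $a+b\le3\varepsilon$ one only gets $\min\{a,b\}\le 3\varepsilon/2$. What is needed is the difference estimate \eqref{eq:trace-difference} for the unmodified nonstandard map. Anchoring it at $E=0$ costs $a$, anchoring it at $E=X_0$ costs $b$, and together they give $3\varepsilon+\min\{a,b\}\le\frac92\varepsilon$.
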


The $3\varepsilon$ bound remains valid when surjectivity is weakened to $\delta$-surjectivity, with no dependence on $\delta$:

\begin{corollary}\label{cor:trace-delta-surjective} Let $\varepsilon,\delta\geq0$, and let $T:(\C,\done)\to(\C,\done)$ be a $\delta$-surjective $\varepsilon$-isometry with $T(0)=0$. There is a unitary or antiunitary $U$ on $H$ such that
\begin{equation}
 \|T(A)-UAU^*\|_1\leq3\varepsilon,
 \qquad A\in\C.
 \label{eq:trace-delta-surjective}
\end{equation}
\end{corollary}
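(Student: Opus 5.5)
The plan is to reduce the $\delta$-surjective case to the surjective case by passing to the real Banach space $E=S_1(H)_{\mathrm{sa}}$ and using a Banach-space stability result for standard $\varepsilon$-isometries in which $\delta$ plays no role. The point is that when $T(0)=0$, the trace is almost preserved: $|\Tr T(A)-\Tr A|=\bigl|\|T(A)-T(0)\|_1-\|A-0\|_1\bigr|\le\varepsilon$. Hence $T$ is a standard $\varepsilon$-isometry of the cone $E_+$ into $E_+$ that almost preserves the trace functional. I would use that the $3\varepsilon$ argument for Theorem~\ref{thm:trace-global} with $T(0)=0$ relies on surjectivity only to produce, for each target, a preimage whose image is close to it, and that the argument is asymptotic along large scalar multiples $tA$, $t\to\infty$. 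There an additive error $\delta$ becomes negligible after dividing by $t$.

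Concretely, I would first define the limit map $\Phi(A)=\lim_{n\to\infty}2^{-n}T(2^nA)$, or better a limit along a free ultrafilter of $t^{-1}T(tA)$ taken in a weak$^*$ sense, with the Hyers-type argument adapted to the cone. The map $\Phi$ is then a positively homogeneous exact isometry of $(\C,\done)$. Since $t^{-1}T$ is $(\varepsilon/t)$-isometric and $(\delta/t)$-surjective after rescaling, $\Phi$ becomes surjective onto $\C$ in the limit. This uses closedness of the range of an isometry together with density of the asymptotic range. I would then invoke Theorem~\ref{thm:MT} to write $\Phi(A)=UAU^*$. Alternatively, and more robustly, I would apply Theorem~\ref{thm:trace-global} directly to a surjective modification: define $\tilde T(A)$ to be $T(A)$ corrected so that it becomes surjective. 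Since the final estimate in Theorem~\ref{thm:trace-global} is uniform, a compactness-free approach should be preferred.

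The second step is to bound $\|T(A)-UAU^*\|_1\le3\varepsilon$ pointwise. For fixed $A$ and large $t$, compare $T(A)$ with $T(tB)$ for suitable $B$ and use $\Phi(tB)\approx T(tB)$ up to $o(t)$. The key estimate would mirror the surjective proof: by the triangle inequality in $\ell^1$ of eigenvalues together with the near trace-preservation, we get $\Tr(T(A)-UAU^*)_\pm$ controlled by $\varepsilon$-terms. We take $B$ to be rank-one projections $U^*PU$ and large multiples. Since $\|T(A)-T(tP')\|_1\approx\|A-tP'\|_1=t-\langle\cdot\rangle+\dots$, this yields lower bounds on $\langle T(A)x,x\rangle$ versus $\langle UAU^*x,x\rangle$ with errors $\varepsilon$ plus $o(1)$ terms. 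Combining these with the trace identity gives $3\varepsilon$.

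The main obstacle is showing that $\Phi$ is surjective, or otherwise that it is a Wigner symmetry, without surjectivity of $T$. The only information is $\delta$-surjectivity, which after scaling gives $\Phi$ dense, hence full, range. Justifying existence of the limit and its isometric property on the cone, where Omladič–Šemrl-type arguments need the whole space, is delicate. I would first try to prove directly that $t^{-1}T(tA)$ converges (Cauchy in $t$) using standardness and the cone's lattice-like structure with rank-one projections, and only then fall back on an ultrafilter limit.
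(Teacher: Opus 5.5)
Your proposal has a genuine gap in each of its two steps, and it misses the one idea that makes the bound independent of $\delta$.

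\textbf{The pointwise estimate.} Probing with large multiples of rank-one projections cannot give a trace norm bound. Grant yourself the limit formula of Lemma~\ref{lem:trace-busemann} on the image side, and write $G=T(A)-UAU^*$. The probe $tP_x$ then only yields $|\Tr G-2\langle Gx,x\rangle|\le2\varepsilon$. Together with $|\Tr G|\le\varepsilon$, this bounds the quadratic form, giving $\|G\|_\infty\le\frac32\varepsilon$. That is far from a trace norm bound. For example, $G=\varepsilon(P_1-P_2)$ with orthogonal rank-$N$ projections $P_1,P_2$ satisfies both constraints, yet $\|G\|_1=2N\varepsilon$.

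To reach $\|G\|_1$ you must probe with operators whose supports are the entire positive and negative spectral subspaces of $G$. A second difficulty then appears. The finite remainder $\lim_t\bigl(\|T(tB)-X\|_1-\|T(tB)\|_1\bigr)$ is not determined by the limit of $T(tB)/t$. An error in $T(tB)$ that is bounded in trace norm but lives off $s(B)$ changes this remainder, because support projections are not continuous.

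This is exactly what Theorem~\ref{thm:trace-rigidity} circumvents, through three devices:
\begin{enumerate}
\item two probes $A,C$ with $s(A)=P$ and $s(C)=Q$;
\item the exact norming operator $J_n$ of $S(nA)-S(nC)$, which makes the middle term an equality;
\item the argument that $J_n\to J$ strongly on $M$.
\end{enumerate}
The three uses of the $\varepsilon$-isometry inequality then produce $3\varepsilon$. Your sentence ``combining these with the trace identity gives $3\varepsilon$'' does not replace this argument.

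\textbf{The construction of $\Phi$.} You have no proof that $2^{-n}T(2^nA)$ converges in trace norm. The Bures blow-down rests on the identity \eqref{eq:cone-algebra}, which has no trace norm analogue. Already in the positive cone of two-dimensional $\ell^1$ there are exact geodesic rays from $0$ whose directions $x_t/t$ oscillate. In the paper, Lemma~\ref{lem:trace-scaling} gives $S(nB)/n\to B$ only after a finite operator norm bound is already known. Your fallback, a weak$^*$ ultrafilter limit, only gives lower semicontinuity of the trace norm. So $\Phi$ need not be an isometry, and Theorem~\ref{thm:MT} cannot be applied.

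Your ``alternative'' surjective modification is the right start. It is \cite[Lemma~5.1]{DLL}, which gives a standard surjective $\alpha$-isometry $R$ with $\sup_A\|T(A)-R(A)\|_1\le\beta$. But applying Theorem~\ref{thm:trace-global} to $R$ only yields $\|T(A)-UAU^*\|_1\le3\alpha+\beta$, which depends on $\delta$.

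The missing idea is that this bound need only be finite. It shows that $S(A)=U^*T(A)U$ satisfies \eqref{eq:operator-bound}. Then Theorem~\ref{thm:trace-rigidity}, applied to the original standard $\varepsilon$-isometry $S$ with $E=0$, gives $\|T(A)-UAU^*\|_1\le3\varepsilon$ with no dependence on $\alpha$, $\beta$ or $\delta$. This is the paper's proof; it obtains $U$ by applying \cite[Theorem~3.15]{DLLZ} to $R$.
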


The surjectivity hypotheses in the cone results, including $\delta$-surjectivity in the corollary, cannot simply be omitted, even for exact isometries; see Remark 2.1.

We next state the results for the state space, whose proofs are given in Section \ref{sec:spheres}.

\begin{theorem}\label{thm:sphere-instability} Let $H$ be infinite dimensional. There are bijections $\Theta_n:\St\to\St$ and numbers $\varepsilon_n\to0$ such that each $\Theta_n$ is an $\varepsilon_n$-isometry for both $\db$ and $\done$, but
\begin{align}
 \inf_U\sup_{\rho\in\St}
 \db(\Theta_n(\rho),U\rho U^*)
 &\geq\beta_{\mathrm b}
 :=\sqrt{2-\sqrt 3},
 \label{eq:fixed-bures-gap}\\
 \inf_U\sup_{\rho\in\St}
 \|\Theta_n(\rho)-U\rho U^*\|_1
 &\geq 1,
 \label{eq:fixed-trace-gap}
\end{align}
where the infima run over all unitary and antiunitary $U$. As a result, for neither metric is there a function $\omega(\varepsilon)\to0$ such that every surjective $\varepsilon$-isometry is within $\omega(\varepsilon)$ of a unitary or antiunitary conjugation. No such function can be uniform in the dimension. \end{theorem}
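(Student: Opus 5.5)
The plan is to build $\Theta_n$ as a \emph{state-dependent} conjugation,
\[
 \Theta_n(\rho)=V_n(\rho)\,\rho\,V_n(\rho)^*,
\]
where $V_n(\rho)$ is a unitary chosen from a slowly varying family. The point of the family is that nearby states are conjugated by nearly equal unitaries, while the family as a whole is not close to any single unitary or antiunitary.

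Fix an orthonormal sequence $(e_k)_{k\geq1}$ in $H$. Let $W_\theta$ be the unitary that rotates $\operatorname{span}\{e_1,e_2\}$ by angle $\theta$ and is the identity elsewhere. I will use one of two designs for $V_n(\rho)$.

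\textbf{Design (a).} Set $V_n(\rho)=W_{\theta_n(\rho)}$, with $\theta_n(\rho)$ a function of the spectral data of $\rho$ that oscillates between $0$ and $\pi/6$ as $\rho$ moves through a long chain of states. The states in the chain are chosen so that consecutive ones are $\varepsilon_n$-close, which is possible because in infinite dimensions one can pass through $n\to\infty$ almost orthogonal intermediate states.

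\textbf{Design (b), a Hilbert-hotel patch.} Let $\sigma$ be the unilateral shift $e_k\mapsto e_{k+1}$. Conjugation by $\sigma$ is an exact but non-surjective isometry of $\St$. I will interpolate between the identity on states supported near $e_1,\dots,e_n$ and this shift on states in the tail. The transition is spread over $n$ steps, each moving by $O(1/n)$. The target angle $\pi/6$ is suggested by the constant $\beta_{\mathrm b}=\sqrt{2-\sqrt3}$: for pure states at angle $\pi/6$ one has $\db^2=2-2\cos(\pi/6)=2-\sqrt3$.

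The key steps, in order, are as follows.

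First, verify bijectivity. If $\rho\mapsto V_n(\rho)$ depends only on invariants preserved by the conjugation (for instance, the weights $\langle\rho e_j,e_j\rangle$ for $j\geq3$, which $W_\theta$ does not change), then the inverse is $\rho\mapsto V_n(\rho)^*\rho V_n(\rho)$. This gives bijectivity for free, and I will arrange the construction so that this holds.

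Second, bound the distortion. Use
\[
 d\bigl(V\rho V^*,W\sigma W^*\bigr)\leq d(\rho,\sigma)+d\bigl(V\sigma V^*,W\sigma W^*\bigr),
\]
together with $\|V\sigma V^*-W\sigma W^*\|_1\leq2\|V-W\|_\infty$ and the analogous Bures bound, which follows from $\db\leq\sqrt{\done}$. This reduces the $\varepsilon_n$-isometry property to an estimate
\[
 \|V_n(\rho)-V_n(\sigma)\|_\infty\lesssim\varepsilon_n+(\text{small})\cdot d(\rho,\sigma).
\]
That estimate must hold additively, which is why the invariants used to define $V_n$ must have oscillation at most $\varepsilon_n$ over pairs at any given distance, up to an additive error.

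Third, establish the gap: for every unitary or antiunitary $U$, $\Theta_n$ must be far from $\rho\mapsto U\rho U^*$ at some $\rho$. To do this, exhibit two families of test states. On one, $\Theta_n$ acts as the identity; on the other, it acts as $W_{\pi/6}$ (respectively the shift). Show that a single Wigner symmetry within $\beta_{\mathrm b}$ (respectively within trace distance $1$) of both families would have to fix $e_1$ and also rotate it by $\pi/6$, which is impossible. Here I will use Wigner's theorem on transition probabilities to pass from approximate preservation on pure states to a contradiction.

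Finally, the last sentence of the theorem follows formally. If a modulus $\omega$ existed, then $\omega(\varepsilon_n)\to0$ would contradict the fixed gaps. For the uniformity claim, the maps built this way only involve finitely many dimensions for each $n$, so they give finite-dimensional counterexamples with growing dimension.

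The main obstacle is the second step: making $V_n$ depend on $\rho$ with only \emph{additive} error $\varepsilon_n$, while its range has diameter of order one. A Lipschitz function on a set of diameter $2$ cannot do this. So I must exploit that the distortion only needs to be small for pairs where $V_n(\rho)\neq V_n(\sigma)$ actually matters, namely where $\rho$ and $\sigma$ put mass on $\operatorname{span}\{e_1,e_2\}$.

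The first thing I will try is to let the angle depend on $\rho$ only through the mass it places far out in the sequence, while the rotation acts on the first coordinates. Then a large change in angle happens only for states whose mass on the rotated block is of order $1/n$, so the rotation barely moves them. The rotation angle must then be scaled up as the mass on the block scales down, keeping the product below $\varepsilon_n$. Whether this tradeoff still leaves a gap of order one is exactly what has to be checked.
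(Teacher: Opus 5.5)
Your bijectivity device (the conjugating unitary depends only on quantities the conjugation preserves) and the final formal deduction match the paper. But there is a genuine gap. No construction is actually fixed, and the mechanism you propose for design (a) cannot give an order-one gap.

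\textbf{Why design (a), as proposed, fails.} Put $K=\operatorname{span}\{e_1,e_2\}$ and $\mu(\rho)=\Tr(P_K\rho)$. If a unitary $R$ is the identity on $K^\perp$, then $R\rho R^*-\rho=(R-I)P_K\rho R^*+\rho P_K(R^*-I)$. Together with $\|P_K\rho\|_1\le\|P_K\rho^{1/2}\|_2\|\rho^{1/2}\|_2$, this gives $\|R\rho R^*-\rho\|_1\le2\|R-I\|_\infty\sqrt{\mu(\rho)}$. Since $W_{\theta_0}^*W_\theta=W_{\theta-\theta_0}$, suppose $\theta(\rho)$ stays within $\delta$ of some $\theta_0$ on all states with $\mu(\rho)\ge\mu_0$. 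Then $\sup_\rho\|\Theta_n(\rho)-W_{\theta_0}\rho W_{\theta_0}^*\|_1\le\max\{4\sqrt{\mu_0},2\delta\}$. That is exactly the situation you aim for, with large changes of angle only where the block mass is $O(1/n)$. So your map is uniformly close to a single rotation. A trace gap $1$ would instead require, for every $\theta_0$, states of block mass bounded below (essentially $\mu(\rho)\ge\frac1{16}$) at which $\theta(\rho)$ differs from $\theta_0$ by an amount of order one.

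\textbf{The angle range is too small.} Take $\theta(\rho)\in[0,\pi/6]$, $U=W_{\pi/12}$, and put $\phi=\theta(\rho)-\pi/12$. Then $\Fid(\Theta_n(\rho),U\rho U^*)\ge|\Tr(W_\phi\rho)|\ge\cos(\pi/12)$, because $\Tr(W_\phi\rho)$ lies in the convex hull of the spectrum $\{1,e^{\pm i\phi}\}$. So every state is within Bures distance $\sqrt{2-2\cos(\pi/12)}<\sqrt{2-\sqrt3}$ and trace distance $2\sqrt{2-2\cos(\pi/12)}<1$. The constant $\sqrt{2-\sqrt3}$ measures the distance to the \emph{best} $U$, not the spread of your family. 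Wigner's theorem is about exact preservation and yields no quantitative lower bound of this kind.

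\textbf{Design (b).} It is not specified: the shift is not unitary, so neither the invariants nor bijectivity are in place. It also rests on a proper isometric embedding, so it is intrinsically infinite dimensional and cannot supply the examples on $\mathbb C^d$ needed for the last sentence of the theorem.

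\textbf{The missing idea.} The paper uses the logarithmic spiral $\Theta_\tau(\rho)=D_\tau(\rho)\rho D_\tau(\rho)^*$, with $D_\tau(\rho)e_k=p_k^{i\tau/2}e_k$ and $p_k=\langle\rho e_k,e_k\rangle$. Every coordinate receives a phase governed by the state's own weight there. So the quantity to control is not $\|V(\rho)-V(\sigma)\|_\infty$, which you rightly observe cannot be small, but the weighted error
$\|\rho^{1/2}(D_\tau(\rho)^*D_\tau(\sigma)-I)\sigma^{1/2}\|_1\le\tau\sum_i\sqrt{p_iq_i}\,|\log\sqrt{q_i/p_i}|\le2\tau$,
using $\sqrt{pq}\,|\log\sqrt{q/p}|\le|q-p|$. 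A large phase difference on a coordinate requires $q_i/p_i$ far from $1$, and is then paid for by $|q_i-p_i|$. This controls fidelity directly, and trace distance through the Gram-operator argument of Lemma~\ref{lem:block-trace} (via Powers--St{\o}rmer). The resulting distortions are $2\sqrt\tau$ and $4\sqrt{2\tau}$.

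\textbf{Where the gap comes from.} It comes from dimension rather than from a large rotation. With $\tau_d=2\pi/\log(d-1)\to0$, the unit vector with one coordinate $1/\sqrt2$ and $d-1$ coordinates $1/\sqrt{2(d-1)}$ is sent to an orthogonal vector, while $d^{-1/2}(1,\dots,1)$ is fixed. To beat every unitary or antiunitary $U$, the paper compresses $A\mapsto UAU^*$ to a complex linear map on $M_d$, using a transpose in the antiunitary case. It then averages this map over diagonal unitaries and permutations, which commute with the spiral, reducing it to $\alpha\Tr(A)I+\beta A+\gamma\operatorname{diag}(A)$. The two test vectors force an operator-norm error at least $\frac12\max\{|\beta+1|,|\beta-1|\}\ge\frac12$. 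This converts into the trace gap $1$ and the Bures gap $\sqrt{2-\sqrt3}$.
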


\begin{corollary}\label{cor:fixed-dimensional} For each $d<\infty$ and each choice of $\mathsf d\in\{\db,\done\}$ on $\St(\mathbb C^d)$, there is a function $\omega_{d,\mathsf d}(\varepsilon)\to0$ as $\varepsilon\to0^+$ such that every surjective $\varepsilon$-isometry $T:(\St(\mathbb C^d),\mathsf d)\to(\St(\mathbb C^d),\mathsf d)$ satisfies
\[
 \sup_{\rho\in\St(\mathbb C^d)}
 \mathsf d\bigl(T(\rho),U\rho U^*\bigr)
 \leq\omega_{d,\mathsf d}(\varepsilon)
\]
for some unitary or antiunitary $U$. \end{corollary}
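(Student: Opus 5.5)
Fix $d<\infty$ and one of the two metrics $\mathsf d\in\{\db,\done\}$. The plan is a compactness argument by contradiction, with Theorem~\ref{thm:MT} (in dimension $d$, for $(\St,\mathsf d)$) supplying the limiting isometry. First I would define the candidate modulus directly:
\[
 \omega_{d,\mathsf d}(\varepsilon):=\sup_{T}\,\inf_{U}\,\sup_{\rho\in\St(\mathbb C^d)}\mathsf d\bigl(T(\rho),U\rho U^*\bigr),
\]
where $T$ ranges over surjective $\varepsilon$-isometries of $(\St(\mathbb C^d),\mathsf d)$ and $U$ over unitaries and antiunitaries. This quantity is finite, since the state space has finite diameter. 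It is nondecreasing in $\varepsilon$. The infimum over $U$ is attained, because the unitary group together with its antiunitary coset ($U=VJ$ with $J$ entrywise conjugation) is compact and $U\mapsto U\rho U^*$ is continuous uniformly in $\rho$. So the whole task reduces to proving $\omega_{d,\mathsf d}(\varepsilon)\to0$ as $\varepsilon\to0^+$.

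Suppose this fails. Then there exist $\eta>0$, $\varepsilon_n\to0$, and surjective $\varepsilon_n$-isometries $T_n$ with $\sup_\rho\mathsf d(T_n(\rho),U\rho U^*)\geq\eta$ for every $U$. The maps $T_n$ need not be continuous, so Arzel\`a--Ascoli does not apply as stated. I would instead use a diagonal argument on a countable dense subset $D\subset\St(\mathbb C^d)$. Since $\St$ is compact, a subsequence satisfies $T_n(q)\to T(q)$ for every $q\in D$, and the limit $T$ is an exact isometry on $D$. It therefore extends uniquely to an isometry $T:\St\to\St$. An isometry of a compact metric space into itself is surjective, so $T$ is a bijective isometry, and by Theorem~\ref{thm:MT} it has the form $T(\rho)=U\rho U^*$.

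Next I would upgrade the pointwise convergence on $D$ to uniform convergence on all of $\St$. Given $\gamma>0$, choose a finite $\gamma$-net $F\subset D$ and take $n$ large enough that $\mathsf d(T_n(q),T(q))<\gamma$ for all $q\in F$ and $\varepsilon_n<\gamma$. For any $\rho$, pick $q\in F$ with $\mathsf d(\rho,q)<\gamma$. Then $\mathsf d(T_n(\rho),T_n(q))\leq\mathsf d(\rho,q)+\varepsilon_n<2\gamma$ and $\mathsf d(T(q),T(\rho))<\gamma$. By the triangle inequality, $\mathsf d(T_n(\rho),U\rho U^*)<4\gamma$ for all $\rho$. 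Taking $4\gamma<\eta$ contradicts the choice of $T_n$, which proves $\omega_{d,\mathsf d}(\varepsilon)\to0$.

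The step I expect to need the most care is the passage to the limit isometry. Two points matter there: the limit must be surjective, which compactness supplies, so that Theorem~\ref{thm:MT} applies; and the uniform step must work even though each $T_n$ may be discontinuous, which is handled by the $\varepsilon_n$-almost-Lipschitz estimate above. Surjectivity of the $T_n$ is not actually used beyond the existence of the limit. The argument also shows why the modulus depends on $d$: the nets $F$ have size growing with the dimension, and Theorem~\ref{thm:sphere-instability} confirms that no uniform modulus exists.
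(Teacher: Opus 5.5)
Your proof is correct and follows the paper's argument essentially step for step: contradiction, diagonal extraction on a countable dense set, extension of the limit to an isometry, and a finite-net estimate of the form $\mathsf d(T_n\rho,T\rho)\le 2\gamma+\varepsilon_n+\mathsf d(T_nq,Tq)$, which gives uniform convergence even though the $T_n$ may be discontinuous. The one real difference is how you show the limit is onto: you use the fact that a distance-preserving self-map of a compact metric space is surjective, whereas the paper chooses preimages $y_n$ with $T_n(y_n)=y$ and passes to a convergent subsequence. Your route shows that the surjectivity hypothesis on $T$ is not needed in fixed finite dimension.
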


\subsection{Related work}

The stability problem for isometries was introduced by Hyers and Ulam \cite{HU}. For real Banach spaces, Gevirtz \cite{Gevirtz} proved that every standard surjective $\varepsilon$-isometry is uniformly within $5\varepsilon$ of a surjective linear isometry. Omladi\v{c} and \v{S}emrl \cite{OS} obtained the optimal bound $2\varepsilon$. \v{S}emrl and V\"ais\"al\"a \cite{SV} proved that the same bound holds for standard $\delta$-surjective $\varepsilon$-isometries, independently of $\delta$. Vestfrid \cite{Vestfrid15} obtained a $2\varepsilon$ approximation by an affine surjective isometry under a weaker asymptotic condition on the range.

Weak stability for nonsurjective maps was studied by Cheng, Dong and Zhang \cite{CDZ} and by Cheng and Dong \cite{CD}. In the form proved in \cite{CD}, if $f:X\to Y$ is a standard $\varepsilon$-isometry between real Banach spaces, then for every $x^*\in X^*$ there is $\varphi\in Y^*$ with $\|\varphi\|=\|x^*\|$ such that
\begin{equation}
 |\varphi(f(x))-x^*(x)|\leq3\varepsilon\|x^*\|,
 \qquad x\in X.
 \label{eq:weak-stability-background}
\end{equation}
The functional $\varphi$ may depend on $x^*$. In our trace norm argument, positive operators with complementary supports identify the limit of the norming functionals with a prescribed sign operator.

The preceding results concern maps defined on whole Banach spaces. Cheng, Tu and Zhang \cite{CTZ} studied weak stability for maps defined on wedges of Banach spaces. For positive cones of $L^p$ spaces, $1<p<\infty$, Sun \cite{Sun} proved a stability theorem with bound $4\varepsilon$ for standard almost-surjective $\varepsilon$-isometries. Sun et al. \cite{SSW} obtained the bound $2\varepsilon$ for standard $\delta$-surjective maps between positive cones of continuous function spaces when the underlying compact Hausdorff spaces are perfectly normal. Vestfrid \cite{Vestfrid23} proved a sharp $2\varepsilon$ bound for standard surjective maps between such cones over arbitrary compact Hausdorff spaces.

Dong, Leung and Li \cite{DLL} proved stability results for several classes of ordered Banach spaces, including the self-adjoint Schatten classes for $1<p<\infty$ with the Schatten class result not covering the trace-class endpoint $p=1$. For positive $L^1$ cones, the same three authors and Zhang \cite{DLLZ} then proved the optimal bound $2\varepsilon$ for standard $\delta$-surjective maps. Their Theorem~3.15 gives a uniform operator norm estimate: for every standard surjective $\varepsilon$-isometry $R$ of $\C$, there is a unitary or antiunitary $U$ on $H$ such that
\[
 \sup_{A\in\C}\|R(A)-UAU^*\|_\infty<\infty.
\]
Their Corollary~3.16 gives a trace norm estimate in finite dimension with a constant proportional to the dimension. Passing from operator norm to trace norm using $\|X\|_1\leq d\|X\|_\infty$ costs a factor of $d$ and gives no bound in infinite dimension. In the treatment in this paper we instead use the dual formula for the trace norm to estimate the error with a constant independent of dimension. Theorem~\ref{thm:trace-global} answers their question about infinite-dimensional trace norm stability, giving $3\varepsilon$ when zero is fixed and $9\varepsilon/2$ without this assumption.

\subsection{Organization of the paper} Section~\ref{sec:preliminaries} collects the operator and metric identities used below and explains the measurement interpretation of the trace metric. We then prove stability for the Bures metric on the cone (Section~\ref{sec:blowdown}). We construct an exact isometry as a limit of rescaled maps, then use distance comparisons along rays to bound its distance from the original map uniformly. After proving Bures stability, we turn to the trace metric. The main step in proving our trace norm stability results is Theorem~\ref{thm:trace-rigidity}, which upgrades a finite uniform operator norm bound to a trace norm estimate for differences, with a constant independent of dimension (Subsection~\ref{sec:trace-rigidity}). Combined with the known operator norm estimate, this gives our trace norm stability results: Theorem~\ref{thm:trace-global} for surjective maps and Corollary~\ref{cor:trace-delta-surjective} for maps that fix zero and are $\delta$-surjective. These applications are proved in Subsection~\ref{sec:trace-main}. Section~\ref{sec:spheres} constructs state space examples showing that stability fails in infinite dimension and cannot hold uniformly in the dimension. We also present a compactness argument establishing a stability modulus tending to zero in each fixed finite dimension.

\section{Preliminaries}\label{sec:preliminaries}

We collect the elementary operator facts and metric calculations used in the proofs.

\subsection{Operator notation and inequalities}

An operator $X$ is Hilbert-Schmidt if $\Tr X^*X<\infty$; its Hilbert-Schmidt norm is $\|X\|_2=(\Tr X^*X)^{1/2}$. The subscripts $1$, $2$ and $\infty$ follow the Schatten norm convention: for compact operators, these norms are the $\ell^1$, $\ell^2$ and $\ell^\infty$ norms of the singular values, respectively. We will use the Schatten form of the Cauchy-Schwarz inequality
\begin{equation}
 \|XY\|_1\leq\|X\|_2\|Y\|_2
 \label{eq:schatten-holder}
\end{equation}
and trace norm duality
\begin{equation}
 \|X\|_1=\sup_{\|K\|_\infty\leq1}|\Tr(KX)|.
 \label{eq:trace-duality}
\end{equation}
For completeness, if $XY=V|XY|$ is the polar decomposition, then $V$ acts isometrically on the closure of the range of $|XY|$ and is zero on its orthogonal complement. Such an operator is called a partial isometry. We have
\[
 \|XY\|_1=\Tr(V^*XY)
 \leq\|X^*V\|_2\|Y\|_2
 \leq\|X\|_2\|Y\|_2.
\]
This proves \eqref{eq:schatten-holder} from the ordinary Cauchy-Schwarz inequality for the Hilbert-Schmidt inner product. Formula \eqref{eq:trace-duality} says that the bounded operators form the dual space of the trace-class.

For $A\geq0$, the support projection $s(A)$ is the orthogonal projection onto the closure of the range of $A$, or equivalently onto $(\ker A)^\perp$. We say that $A$ is supported on a closed subspace $M$ if $A$ vanishes on $M^\perp$ and maps $M$ into itself.

For a self-adjoint operator $X$, its positive and negative parts are $X_+=(|X|+X)/2$ and $X_-=(|X|-X)/2$; both of these operators are positive and have orthogonal supports. The positive and negative spectral subspaces of $X$ are the ranges of its spectral projections for $(0,\infty)$ and $(-\infty,0)$, respectively.

\subsection{Metric identities and examples}

On positive definite matrices, $\db$ is also called the Bures-Wasserstein distance; see Bhatia, Jain and Lim \cite{BJL} for a matrix analysis treatment.

For $A,B\in\C$, put
\[
 \Fid(A,B):=\Tr\bigl(A^{1/2}BA^{1/2}\bigr)^{1/2}.
\]
The quantity $\Fid$ is called the root fidelity, while $\Fid(A,B)^2$ is called the fidelity. For the diagonal operators in the introduction,
\[
 \Fid(A,B)=\sum_j\sqrt{a_jb_j}.
\]

We will also use the identity
\begin{equation}
 \Fid(A,B)=\|A^{1/2}B^{1/2}\|_1.
 \label{eq:fidelity-trace-norm}
\end{equation}
Indeed, the absolute value of $B^{1/2}A^{1/2}$ is $(A^{1/2}BA^{1/2})^{1/2}$, and an operator and its adjoint have the same trace norm. We will use the identities
\begin{equation}
 \db(tA,tB)=\sqrt t\,\db(A,B),
 \qquad
 \done(tA,tB)=t\,\done(A,B),
 \qquad t\geq0.
 \label{eq:homogeneity}
\end{equation}
These identities follow from $\Fid(tA,tB)=t\Fid(A,B)$ and homogeneity of the trace norm. On the state space, the two metrics satisfy
\begin{equation}
 \db(\rho,\sigma)^2\leq\done(\rho,\sigma)
 \leq2\db(\rho,\sigma),
 \qquad \rho,\sigma\in\St.
 \label{eq:state-metric-comparison}
\end{equation}
In finite dimension, this follows from the Fuchs-van de Graaf inequalities \cite{FvdG}. Lemma \ref{lem:bures-complete} gives the corresponding estimate for arbitrary positive trace-class operators, without a dimension assumption.

We briefly recall how these objects occur in quantum theory (but we will not use the quantum formalism in the proofs). A unit vector $x\in H$ gives the rank one state $P_x=x\otimes x$, where $P_xz=\langle z,x\rangle x$. Convex combinations of the $P_x$ give examples of mixed states. A two-outcome measurement is described by $E$ and $I-E$, where $0\leq E\leq I$. In state $\rho$, the first outcome has probability $\Tr(\rho E)$, so $\Tr((\rho-\sigma)E)$ is the difference between its probabilities in the states $\rho$ and $\sigma$.

The trace metric then has the following interpretation: for two states $\rho,\sigma$, one has
\begin{equation}
 \sup_{0\leq E\leq I}
 \big|\Tr\big((\rho-\sigma)E\big)\big|
 =\frac12\done(\rho,\sigma).
 \label{eq:trace-distinguishability}
\end{equation}
Thus $\done(\rho,\sigma)/2$ is the largest possible difference between the probabilities of the same measurement outcome in the states $\rho$ and $\sigma$.

The proof of \eqref{eq:trace-distinguishability} is as follows: let $X=\rho-\sigma$, and $X=X_+-X_-$, where $X_\pm=(|X|\pm X)/2$. Since $\Tr X=0$, we have $\Tr X_+=\Tr X_-=\|X\|_1/2$. For $0\leq E\leq I$,
\[
 -\Tr X_-\leq\Tr(XE)\leq\Tr X_+.
\]
Let $E$ be the support projection of $X_+$ so $X_+E=X_+$; the positive and negative parts of $X$ act on orthogonal subspaces, so $X_-E=0$, and
\[ \Tr(XE)=\Tr X_+=\frac12\|X\|_1. \]
This attains the upper bound and proves \eqref{eq:trace-distinguishability}.

For pure states, the Bures distance is determined by the overlap of their vectors. If $x,y$ are unit vectors, then
\[ \Fid(P_x,P_y)=|\langle x,y\rangle|, \qquad \db(P_x,P_y)^2=2-2|\langle x,y\rangle|. \]
The absolute value reflects the fact that multiplying either vector by a phase does not change its state. The formula in \eqref{eq:bures-definition} extends this comparison to mixed states.

We briefly give an interpretation of the cone $\C$. If $A\in\C$ and $\Tr A\leq1$, then $A$ is often called a subnormalized state; its trace may give the probability that a conditioning step succeeds. More generally, every nonzero $A\in\C$ is a positive multiple of a state:
\[ A=(\Tr A)\rho,\qquad \rho=\frac{A}{\Tr A}\in\St. \]
This matters below because we can rescale elements of $\C$, while every element of $\St$ has trace one.

\begin{remark} \label{rem:range-assumption} The cone conclusions fail without an assumption on the range, even when $\varepsilon=0$. For example, suppose that $H$ is infinite dimensional, let $V:H\to H$ be an isometry whose range is not all of $H$, and choose a nonzero $P\in\C$ supported on $(VH)^\perp$. The map
\[
 A\mapsto VAV^*+P
\]
is an exact isometric embedding for both metrics. For the trace metric this is immediate, since the fixed term $P$ cancels and the trace norm is preserved under conjugation by an isometry. For the Bures metric, the orthogonal block decomposition gives
\[
 \Fid(VAV^*\oplus P,VBV^*\oplus P)=\Fid(A,B)+\Tr P,
\]
so the terms involving $P$ cancel in \eqref{eq:bures-definition}. The map is not onto, and it sends zero to $P$, so it is not one of the conjugations in \eqref{eq:wigner-form}.

Taking $P=0$ instead gives a standard exact isometric embedding $A\mapsto VAV^*$ which is not surjective. Thus the range assumption cannot be omitted even for maps fixing zero. Indeed, this embedding is not $\delta$-surjective for any finite $\delta$: positive multiples of a rank one projection supported on $(VH)^\perp$ have unbounded distance from its range in both metrics. \end{remark}

For separable $H$, Busch \cite{Busch} classified the linear trace norm isometries of the positive cone without assuming surjectivity.

\section{Bures stability on the cone}\label{sec:blowdown}

This section proves Theorem~\ref{thm:bures-main}, including its estimates concerning zero. Throughout the section, $S:(\C,\db)\to(\C,\db)$ is an $\varepsilon$-isometry; surjectivity is assumed only where stated. The proof has two main steps. First, we construct an exact isometry by rescaling. For $t>0$, the identity $\db(t^2A,t^2B)=t\,\db(A,B)$ shows that the map $S_t(A)=t^{-2}S(t^2A)$ is an $(\varepsilon/t)$-isometry. Lemma~\ref{lem:blowdown} proves that these maps converge pointwise in Bures distance along $t=2^n\to\infty$ to an exact isometry $\Phi$ fixing zero. If $S$ is surjective, Lemma~\ref{lem:phi-onto} shows that $\Phi$ is also surjective, and Theorem~\ref{thm:MT} gives $\Phi(A)=UAU^*$. Second, we estimate the distance between $S$ and $\Phi$. Distance comparisons along rays, together with the Busemann limit formula, give $|\Fid(B,S(A))-\Fid(B,\Phi(A))|\leq2\varepsilon$ for every $A\in\C$ and every state $B$. The fidelity profile estimate in Lemma~\ref{lem:profile} then yields \eqref{eq:bures-main}. Further use of the same distance comparisons gives the estimates concerning zero.

\subsection{Construction of an exact isometry} For $A\in\C$, we put
\[
 r(A)=\sqrt{\Tr A}=\db(A,0).
\]
An amplitude of $A\in\C$ is a Hilbert-Schmidt operator $X$ such that $XX^*=A$. In particular, $A^{1/2}$ is an amplitude of $A$, and every amplitude satisfies
\[
 \|X\|_2^2=\Tr(XX^*)=\Tr A.
\]
Fidelity is hard to estimate directly; Uhlmann's theorem expresses it through amplitudes and turns the Bures distance into a Hilbert-Schmidt distance between amplitudes, where the Cauchy-Schwarz inequality is available. Our usage of this theorem is why amplitudes appear. We shall use the following form of the theorem \cite{Uhlmann}. If $X$ is any fixed amplitude of $A$, then
\begin{align}
 \Fid(A,B)
 &=\sup_{YY^*=B}\operatorname{Re}\Tr(X^*Y),
 \label{eq:amplitude-fidelity}\\
 \db(A,B)
 &=\inf_{YY^*=B}\|X-Y\|_2.
 \label{eq:amplitude-distance}
\end{align}
The supremum and infimum are taken over all amplitudes $Y$ of $B$. The second formula follows from the first because
\[
 \|X-Y\|_2^2
 =\Tr A+\Tr B-2\operatorname{Re}\Tr(X^*Y).
\]
Thus the Bures distance is the infimum of the Hilbert-Schmidt distances between such amplitudes. Moreover, if $X,Y$ are amplitudes of $A,B$, then
\[
 \|X^*Y\|_1=\Fid(A,B).
\]
Indeed, $\|X^*Y\|_1=\|Y^*X\|_1$, and the squared singular values of $Y^*X$ are the eigenvalues of $X^*BX$. These are the nonzero eigenvalues of $B^{1/2}AB^{1/2}$, which are also those of $A^{1/2}BA^{1/2}$. This gives the displayed identity and also shows that fidelity is symmetric. If $X^*Y\geq0$, its trace equals its trace norm, the pair $X,Y$ attains the supremum in \eqref{eq:amplitude-fidelity}, and $\Fid(A,B)=\Tr(X^*Y)$. This is useful because it allows one to compute the fidelity by an ordinary trace. These amplitude formulas hold for both separable and non-separable Hilbert spaces $H$, because for trace-class operators, their ranges lie in a separable closed subspace anyway.

We will also use the following square root estimate for positive trace-class operators, which was proved in the work \cite{PS}:
\begin{equation}
 \|A^{1/2}-B^{1/2}\|_2^2\leq\|A-B\|_1.
 \label{eq:powers-stormer}
\end{equation}

We first prove a comparability result which will also give completeness of the metric space $(\C,\db)$.

\begin{lemma}\label{lem:bures-complete} For $A,B\in\C := S_1(H)_+$,
\begin{equation}
 \db(A,B)^2\leq\|A-B\|_1
 \leq \db(A,B)\bigl(r(A)+r(B)\bigr).
 \label{eq:bures-trace-comparison}
\end{equation}
Thus $(\C,\db)$ is a complete metric space. \end{lemma}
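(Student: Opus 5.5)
The plan is to prove the two inequalities in \eqref{eq:bures-trace-comparison} separately, using the Powers--St{\o}rmer inequality \eqref{eq:powers-stormer} for the lower bound and Uhlmann's amplitude formula \eqref{eq:amplitude-distance} for the upper bound. Completeness will then follow by transferring Cauchy sequences between the two metrics.

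For the first inequality, I take the amplitudes $X=A^{1/2}$ and $Y=B^{1/2}$ in \eqref{eq:amplitude-distance}. This gives $\db(A,B)\leq\|A^{1/2}-B^{1/2}\|_2$. Squaring and applying \eqref{eq:powers-stormer} yields $\db(A,B)^2\leq\|A-B\|_1$.

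For the second inequality, I fix the amplitude $X=A^{1/2}$ and take an arbitrary amplitude $Y$ of $B$. Since $A-B=XX^*-YY^*$, I would use the algebraic identity
\[
 XX^*-YY^*=(X-Y)X^*+Y(X-Y)^*.
\]
Applying \eqref{eq:schatten-holder} to each term then bounds the trace norm:
\[
 \|A-B\|_1\leq\|X-Y\|_2\bigl(\|X\|_2+\|Y\|_2\bigr)=\|X-Y\|_2\bigl(r(A)+r(B)\bigr),
\]
where I used $\|X\|_2=r(A)$ and $\|Y\|_2=r(B)$ for amplitudes. Taking the infimum over $Y$ and invoking \eqref{eq:amplitude-distance} gives the claim. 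The only delicate point is that the infimum in \eqref{eq:amplitude-distance} might not be attained. This causes no difficulty, since the right-hand factor $r(A)+r(B)$ does not depend on $Y$.

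For completeness, let $(A_n)$ be $\db$-Cauchy. Since $r(A)=\db(A,0)$ is $1$-Lipschitz for $\db$ by the triangle inequality, the values $r(A_n)$ are bounded. The upper bound in \eqref{eq:bures-trace-comparison} then makes $(A_n)$ Cauchy in trace norm. Because $S_1(H)$ is a Banach space and $\C$ is closed in it, $A_n\to A$ in $\|\cdot\|_1$ for some $A\in\C$. The lower bound gives $\db(A_n,A)\leq\|A_n-A\|_1^{1/2}\to0$.

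I expect no serious obstacle. The only steps needing a little care are the bookkeeping in the identity for $XX^*-YY^*$, and the closedness of the positive cone, which holds because $A\mapsto\langle Ax,x\rangle$ is trace-norm continuous.
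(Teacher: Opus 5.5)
Your proposal is correct and follows the paper's proof essentially step for step. You use the same amplitude choice plus Powers--St{\o}rmer for the lower bound, the same factorization $A-B=(X-Y)X^*+Y(X-Y)^*$ with the Schatten Cauchy--Schwarz inequality for the upper bound, and the same transfer of Cauchy sequences for completeness. Fixing $X=A^{1/2}$ and taking the infimum over $Y$ alone, rather than choosing a near-optimal pair $X,Y$ as the paper does, is an equivalent variant.
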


\begin{proof} We begin with a proof of the first inequality. By \eqref{eq:amplitude-distance}, the particular amplitudes $A^{1/2}$ and $B^{1/2}$ give
\[
 \db(A,B)^2\leq\|A^{1/2}-B^{1/2}\|_2^2.
\]
We combine this with \eqref{eq:powers-stormer} to obtain the first inequality.

We prove the second inequality. Let us choose amplitudes $X,Y$ with $XX^*=A$, $YY^*=B$ and $\|X-Y\|_2$ arbitrarily close to $\db(A,B)$. Since
\[
 A-B=(X-Y)X^*+Y(X^*-Y^*),
\]
\eqref{eq:schatten-holder} gives
\[
 \|A-B\|_1
 \leq\|X-Y\|_2\|X\|_2+\|Y\|_2\|X-Y\|_2.
\]
We have $\|X\|_2=r(A)$ and $\|Y\|_2=r(B)$. We now let $\|X-Y\|_2$ tend to $\db(A,B)$. The second inequality follows.

It remains to prove completeness. If $(A_n)$ is Bures Cauchy, then $r(A_n)=\db(A_n,0)$ is Cauchy by the reverse triangle inequality. In particular, the sequence $(r(A_n))$ is bounded. The second inequality now shows that $(A_n)$ is trace norm Cauchy; let $A$ be its trace norm limit. Trace norm convergence implies operator norm convergence; by the operator norm convergence, $\langle Ax,x\rangle=\lim_n\langle A_nx,x\rangle\geq0$ for every $x\in H$, so $A\geq0$. The first inequality gives $\db(A_n,A)^2\leq\|A_n-A\|_1\to0$. \end{proof}

For $t>0$, we define
\begin{equation}
 S_t(A):=t^{-2}S(t^2A).
 \label{eq:blowdown}
\end{equation}
The family $S_t$ is the blow-down of $S$: it examines $S$ on larger and larger scales and rescales the result back to the original scale and, as $t\to\infty$, it captures the large-scale, or asymptotic, behavior of $S$. In our present setting, the $t^2$ (rather than a $t$) is natural, because $\db(t^2A,t^2B)=t\db(A,B)$, that is, $t^2$-scaling of operators corresponds to $t$-scaling of Bures distances.

The map $S_t$ is an $(\varepsilon/t)$-isometry with respect to the Bures metric. Indeed, using the homogeneity $ d_{\mathrm b}(t^2A,t^2B)=t\,d_{\mathrm b}(A,B)$, we have
\[ d_{\mathrm b}(S_t(A),S_t(B)) =\frac1t\,d_{\mathrm b}\bigl(S(t^2A),S(t^2B)\bigr). \]
Since $S$ is an $\varepsilon$-isometry,
\[ \left| d_{\mathrm b}\bigl(S(t^2A),S(t^2B)\bigr) -d_{\mathrm b}(t^2A,t^2B) \right|\le \varepsilon. \]
Dividing by $t$ and using the earlier display gives
\[
\left| d_{\mathrm b}(S_t(A),S_t(B)) -d_{\mathrm b}(A,B) \right| \le \frac{\varepsilon}{t}. \]
We will obtain an exact isometry by letting $t$ tend to infinity through powers of two.

\begin{lemma}[Blow-down]\label{lem:blowdown} Let $\C := S_1(H)_+$. Let $S:(\C,\db)\to(\C,\db)$ be an $\varepsilon$-isometry, and let $S_t(A):=t^{-2}S(t^2A)$ for $t>0$. For every $A\in\C$, the limit
\begin{equation}
 \Phi(A):=\lim_{n\to\infty}S_{2^n}(A)
 \label{eq:phi-limit}
\end{equation}
exists in Bures distance and belongs to $\C$. The resulting map $\Phi:\C\to\C$ is an exact isometry satisfying $\Phi(0)=0$. \end{lemma}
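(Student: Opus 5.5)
The plan is a Hyers-type argument adapted to the Bures metric. I will show that the sequence $S_{2^n}(A)$ is Cauchy in $\db$ with summable increments, and then use completeness of $(\C,\db)$ from Lemma~\ref{lem:bures-complete}. The doubling step compares $S(B)$ with $\tfrac14 S(4B)$, where $B=t^2A$. The reason this should work is that $B$ is an exact metric midpoint of $0$ and $4B$:
\[
 \db(0,B)=r(B),\qquad \db(0,4B)=2r(B),\qquad \db(B,4B)^2=\Tr B+4\Tr B-2\cdot 2\Tr B=\Tr B .
\]
The approximate isometry nearly preserves these three distances. So the task is an approximate-midpoint rigidity statement in $(\C,\db)$, which I will prove by lifting to amplitudes.

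\textbf{Handling the base point.} Put $c=\db(S(0),0)$, a finite constant depending only on $S$. Write $Z=S(B)$, $Y=S(4B)$ and $r=r(B)=t\,r(A)$. The $\varepsilon$-isometry property and the triangle inequality through $S(0)$ give
\[
 |r(Z)-r|\le\varepsilon+c,\qquad |r(Y)-2r|\le\varepsilon+c,\qquad |\db(Z,Y)-r|\le\varepsilon .
\]

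\textbf{Midpoint rigidity.} Fix an amplitude $W$ of $Y$. By \eqref{eq:amplitude-distance}, choose an amplitude $V$ of $Z$ with $\|V-W\|_2\le\db(Z,Y)+\eta$, where $\eta>0$ is arbitrarily small. Then $\|V\|_2=r(Z)$ and $\|W\|_2=r(Y)$, and the parallelogram law in Hilbert--Schmidt space gives
\[
 \Bigl\|V-\tfrac12W\Bigr\|_2^2=\tfrac12\|V\|_2^2+\tfrac12\|V-W\|_2^2-\tfrac14\|W\|_2^2 .
\]
Substituting the three estimates, the leading terms $\tfrac12r^2+\tfrac12r^2-r^2$ cancel, leaving a bound of order $C(\varepsilon+c)\,r+C(\varepsilon+c)^2$. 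Since $\tfrac12W$ is an amplitude of $\tfrac14Y$, formula \eqref{eq:amplitude-distance} yields
\[
 \db\bigl(Z,\tfrac14Y\bigr)\le C\sqrt{(\varepsilon+c)\,r}+C(\varepsilon+c).
\]
Now rescale using \eqref{eq:homogeneity}, noting that $S_t(A)=Z/t^2$ and $S_{2t}(A)=Y/(4t^2)$:
\[
 \db\bigl(S_t(A),S_{2t}(A)\bigr)=\tfrac1t\,\db\bigl(Z,\tfrac14Y\bigr)\le C\sqrt{(\varepsilon+c)\,r(A)}\;t^{-1/2}+C(\varepsilon+c)\,t^{-1}.
\]
Along $t=2^n$ this is summable, since it is a geometric series in $2^{-n/2}$. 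Hence $(S_{2^n}(A))_n$ is Cauchy, and by Lemma~\ref{lem:bures-complete} it converges in $(\C,\db)$ to some $\Phi(A)\in\C$. The bound is uniform for $r(A)$ in bounded sets, though that is not needed here.

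\textbf{Exactness and $\Phi(0)=0$.} Each $S_{2^n}$ is an $(\varepsilon/2^n)$-isometry, and $\db$ is continuous in both arguments. Passing to the limit in
\[
 \bigl|\db(S_{2^n}(A),S_{2^n}(B))-\db(A,B)\bigr|\le\varepsilon/2^n
\]
therefore gives exact isometry of $\Phi$. Finally, $S_{2^n}(0)=4^{-n}S(0)$ satisfies $\db(4^{-n}S(0),0)=2^{-n}c\to0$, so $\Phi(0)=0$.

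The main obstacle is the midpoint rigidity step. The parallelogram identity only controls a specific pair of amplitudes, and its bound is of square-root order $\sqrt{\varepsilon r}$ rather than linear in $\varepsilon$. One must check that this weaker rate still becomes summable after the $1/t$ rescaling, which it does since $\sqrt{t}/t=t^{-1/2}$. One must also check that the unknown offset $c=\db(S(0),0)$ only enters through lower-order terms. The infimum in \eqref{eq:amplitude-distance} need not be attained, which is why the argument carries the slack $\eta\to0$.
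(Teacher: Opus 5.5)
Your proof is correct and follows the paper's route: the radial estimates through $S(0)$, a dyadic increment bound of order $t^{-1/2}$, completeness from Lemma~\ref{lem:bures-complete}, and passage to the limit for exactness and $\Phi(0)=0$. Your Apollonius identity in amplitude space is exactly the case $s=2t$ of the paper's identity \eqref{eq:cone-algebra}, and it already holds with equality from the fidelity formula: since $\Fid(Z,\tfrac14Y)=\tfrac12\Fid(Z,Y)$, one gets $\db(Z,\tfrac14Y)^2=\tfrac12\Tr Z+\tfrac12\db(Z,Y)^2-\tfrac14\Tr Y$, so neither the amplitudes nor the slack $\eta$ is needed.
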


\begin{proof} Recall that $S : (S_1(H)_+,\db) \to (S_1(H)_+,\db)$. We put $\alpha=r(S(0))$, and $\delta=\varepsilon+\alpha$. We fix $A$ and set $m=r(A)$. For $0<t<s$, we use the following notation:
\[
 X=S(t^2A),\quad Y=S(s^2A),\quad r=r(X),\quad q=r(Y),
 \quad D=\db(X,Y),\quad L=(s-t)m.
\]
The approximate isometry inequality gives $|\db(X,S(0))-tm|\leq\varepsilon$ and by the reverse triangle inequality one has $|r-tm|\leq\delta$.

By the Bures formula, one has
\[
 \db(t^2A,s^2A)=(s-t)\sqrt{\Tr A}=L
\]
so that $L$ is the distance between the inputs, while $D$ is the distance between their images under $S$. By the approximate isometry inequality,
\[
 |D-L|\leq\varepsilon\leq\delta,
\]
where the last inequality follows from $\delta=\varepsilon+\alpha$ and $\alpha\geq0$.

Thus
\begin{equation}
 |r-tm|\leq\delta,\qquad |q-sm|\leq\delta,\qquad
 |D-L|\leq\delta.
 \label{eq:radial-errors}
\end{equation}
Since $\Tr X=r^2$ and $\Tr Y=q^2$, the Bures distance formula \eqref{eq:bures-definition} gives
\[
 D^2=r^2+q^2-2f, \qquad f:=\Fid(X,Y).
\]
Applying that same Bures distance formula to $t^{-2}X$ and $s^{-2}Y$, and using $2(rq-f)=D^2-(r-q)^2$, we obtain
\begin{equation}
 \db(S_tA,S_sA)^2
 =\left(\frac rt-\frac qs\right)^2
   +\frac{D^2-(r-q)^2}{ts}.
 \label{eq:cone-algebra}
\end{equation}
The first term compares the rescaled distances from zero. For the second term, note that points on the same ray satisfy $D=|r-q|$, so its numerator vanishes. Since $t^2A$ and $s^2A$ lie on the same ray, \eqref{eq:radial-errors} lets us bound the deviation $D-|r-q|$ for their images $X$ and $Y$. The numerator $D^2-(r-q)^2$ is nonnegative by the reverse triangle inequality:
\[
 |r-q|=|\db(X,0)-\db(Y,0)|\leq\db(X,Y)=D.
\]

By \eqref{eq:radial-errors} and $|r-q|\leq D$,
\[
 0\leq D-|r-q|\leq3\delta,
 \qquad D+|r-q|\leq2D\leq2(L+\delta).
\]
``Multiplying'' these two inequalities gives
\begin{equation}
 0\leq D^2-(r-q)^2\leq6\delta(L+\delta).
 \label{eq:angular-excess}
\end{equation}

With $s:=2t$, the first two estimates in \eqref{eq:radial-errors} give
\[
 \left|\frac rt-\frac q{2t}\right|
 \leq\frac{3\delta}{2t},
\]
and now \eqref{eq:cone-algebra} and \eqref{eq:angular-excess} yield
\begin{equation}
 \db(S_tA,S_{2t}A)^2
 \leq \frac{3m\delta}{t}
      +\frac{6\delta^2}{t^2}.
 \label{eq:dyadic-increment}
\end{equation}
For $t=2^n$, the distance in \eqref{eq:dyadic-increment} is bounded by a constant times $2^{-n/2}+2^{-n}$. The sum of these bounds is finite, so $(S_{2^n}A)$ is a Cauchy sequence in $(\C,\db)$. By completeness (Lemma~\ref{lem:bures-complete}), it converges to an element $\Phi(A)\in\C$; this shows that $\Phi$ maps $\C$ into $\C$.

For $A,B\in\C$, the approximate isometry property of $S_t$ gives
\[
 \bigl|\db(S_tA,S_tB)-\db(A,B)\bigr|\leq\frac\varepsilon t.
\]
We let $t=2^n\to\infty$ and use continuity of the metric to obtain $\db(\Phi(A),\Phi(B))=\db(A,B)$; in other words, $\Phi$ is an exact isometry (onto its image). Also $S_t(0)=t^{-2}S(0)$, and therefore $\db(S_t(0),0)=\alpha/t\to0$, showing that $\Phi(0)=0$. \end{proof}

Our current goal is to prove Theorem~\ref{thm:bures-main}: given a surjective $\varepsilon$-isometry $S$, we seek a unitary or antiunitary $U$ such that $\db(S(A),UAU^*)\leq2\sqrt2\,\varepsilon$ for every $A\in\C$. The exact isometry $\Phi$ constructed above is our candidate for the conjugation $A\mapsto UAU^*$. We must first identify $\Phi$ as such a conjugation, and then prove the required bound on $\db(S(A),\Phi(A))$.

For the first step, we use Theorem~\ref{thm:MT}, which applies to bijective isometries. Since $\Phi$ preserves distances, it is already injective, so surjectivity is the only remaining thing to prove. This is where we use the assumption that $S$ is onto: we choose a right inverse of $S$ by selecting one preimage of each point, then show that its dyadic rescalings converge to a right inverse of $\Phi$. This will prove that $\Phi$ is onto and allow us to apply Theorem~\ref{thm:MT}.

\begin{lemma}\label{lem:phi-onto} Let $\C:= S_1(H)_+$. If $S : (\C,\db)\to (\C,\db)$ is a surjective $\varepsilon$-isometry, then the map $\Phi : \C\to\C$ in Lemma \ref{lem:blowdown} is bijective. As a result, there is a unitary or antiunitary $U : H\to H$ such that
\begin{equation}
 \Phi(A)=UAU^*.
 \label{eq:phi-wigner}
\end{equation}
\end{lemma}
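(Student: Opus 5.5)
We will show that $\Phi$ has a right inverse. Injectivity is automatic, since $\Phi$ is an exact isometry by Lemma~\ref{lem:blowdown}. So once a right inverse exists, $\Phi$ is a bijective isometry of $(\C,\db)$, and Theorem~\ref{thm:MT} gives \eqref{eq:phi-wigner}. To build the right inverse we use surjectivity of $S$. For each $B\in\C$ we select one preimage (axiom of choice) and call it $R(B)$, so that $S(R(B))=B$ for all $B$.

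The first step is to check that $R$ is itself an $\varepsilon$-isometry of $(\C,\db)$. For $B,C\in\C$ we have $\db(B,C)=\db(S(RB),S(RC))$. The $\varepsilon$-isometry inequality for $S$, applied to the pair $RB,RC$, gives
\[
\bigl|\db(RB,RC)-\db(B,C)\bigr|\leq\varepsilon .
\]
Lemma~\ref{lem:blowdown} uses only the $\varepsilon$-isometry property, not surjectivity or $R(0)=0$. So we may apply it to $R$. Writing $R_t(B)=t^{-2}R(t^2B)$, it yields an exact isometry
\[
\Psi(B)=\lim_{n\to\infty}R_{2^n}(B),
\]
with the limit taken in Bures distance.

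The second step is to compose the rescalings. For every $t>0$,
\[
S_t(R_t(B))=t^{-2}S\bigl(t^2\cdot t^{-2}R(t^2B)\bigr)=t^{-2}\,t^2B=B.
\]
Now fix $B$ and put $t=2^n$. By the triangle inequality and the $(\varepsilon/t)$-isometry property of $S_t$,
\[
\db(\Phi(\Psi B),B)\leq \db\bigl(\Phi(\Psi B),S_t(\Psi B)\bigr)+\db\bigl(S_t(\Psi B),S_t(R_tB)\bigr)
\]
\[
\leq \db\bigl(\Phi(\Psi B),S_t(\Psi B)\bigr)+\db(\Psi B,R_tB)+\frac{\varepsilon}{t}.
\]
As $n\to\infty$, the first term tends to $0$ by the definition of $\Phi$ at the point $\Psi B$. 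The second term tends to $0$ by the definition of $\Psi$. The third term tends to $0$ trivially. Hence $\Phi(\Psi(B))=B$, so $\Phi$ is onto.

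The only delicate point is that $\Phi$ is defined as a pointwise limit while $R_t(B)$ moves with $t$. So we cannot directly pass to the limit in the identity $S_t(R_tB)=B$. This is handled by inserting the fixed point $\Psi B$, as above, and using that each $S_t$ is uniformly close to distance-preserving with error $\varepsilon/t\to0$. That is a uniform equicontinuity-type control, and it lets us exchange the two limits. With surjectivity established, Theorem~\ref{thm:MT} for $(\C,\db)$ finishes the proof.
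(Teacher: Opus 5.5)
Your proof is correct and follows essentially the same route as the paper. You choose a right inverse of $S$, check that it is an $\varepsilon$-isometry, and blow it down to $\Psi$ via Lemma~\ref{lem:blowdown}. You then combine $S_t(R_tB)=B$ with the $(\varepsilon/t)$-isometry property of $S_t$ at the fixed point $\Psi B$ to get $\Phi(\Psi B)=B$, and finish with Theorem~\ref{thm:MT}.
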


\begin{proof} It suffices to prove that $\Phi$ is surjective ($\Phi$ is already injective because it preserves distances). Since $S$ is surjective, it already has preimages; the idea is to blow down those preimages at the same time that we blow down $S$.

Since $S$ is surjective, for each $B\in\C$ choose an element $W(B)\in\C$ such that $S(W(B))=B$. This defines a right inverse $W:\C\to\C$ of $S$. Our next goal is to apply Lemma~\ref{lem:blowdown} to $W$, but in order to apply it, we will first need to show that $W$ is an $\varepsilon$-isometry.

Recall also that $S$ is assumed to be an $\varepsilon$-isometry; applying the approximate isometry inequality for $S$ to $W(X)$ and $W(Y)$, and $S(W(X))=X$ and $S(W(Y))=Y$, gives
\[
 \bigl|\db(W(X),W(Y))-\db(X,Y)\bigr|\leq\varepsilon,
 \qquad X,Y\in\C.
\]
Thus $W$ is itself an $\varepsilon$-isometry. For $t>0$, we define its rescaled maps by
\[
 W_t(B) := t^{-2}W(t^2B).
\]
Lemma~\ref{lem:blowdown}, applied to $W$, shows that
\[
 \Psi(B):=\lim_{n\to\infty}W_{2^n}(B)
\]
exists in the Bures metric for every $B\in\C$. Given any $B\in\C$ we will show that $\Psi B$ is a preimage of $B$ under $\Phi$. Since $S\circ W=\mathrm{id}$, the rescaled maps satisfy
\[
 S_t(W_tB)=t^{-2}S(W(t^2B))=B.
\]
We compare this identity with what happens at the fixed input $\Psi B$.

As shown earlier, $S_t$ is an $(\varepsilon/t)$-isometry, and hence
\[
 \db(S_t(\Psi B),B)
 =\db(S_t(\Psi B),S_t(W_tB))
 \leq\db(\Psi B,W_tB)+\frac{\varepsilon}{t}.
\]
Along $t=2^n\to\infty$, we have $W_tB\to\Psi B$, so the right side tends to zero and so $S_t(\Psi B)\to B$. Since $\Psi B$ is fixed as $t$ varies, the definition of $\Phi$ also gives
\[
 S_t(\Psi B)\to\Phi(\Psi B).
\]
Uniqueness of the limit gives $\Phi(\Psi B)=B$.

Since $\Phi$ is a bijective isometry, an immediate application of Molnár--Timmermann's Theorem~\ref{thm:MT} gives that $\Phi(A)$ must equal $UAU^*$ for some $U$ that is either unitary or antiunitary. \end{proof}

\subsection{Comparison with the exact isometry}\label{sec:profiles}

The limit $\Phi$ describes $S$ on large scalar multiples, and recall that when $S$ is surjective, then $\Phi(D)=UDU^*$ is the symmetry that we want to show approximates $S$. To compare the distance between $S(D)$ and $\Phi(D)$ for every fixed $D \in \mathcal C$, we examine what remains of these large distances after subtracting their common growing part---this latter part is the idea behind a Busemann function, which makes sense in any metric space containing a geodesic ray.\footnote{The reader may have encountered Busemann functions in hyperbolic geometry, where the level sets of Busemann functions in the hyperbolic plane are horocycles.} For a unit speed geodesic ray $\gamma$ starting at $o$, a Busemann function is defined by
\[ b_\gamma(x)=\lim_{t\to\infty} \bigl(d(x,\gamma(t))-t\bigr). \]
Here $t=d(o,\gamma(t))$, so the expression inside the limit is equal to $d(x,\gamma(t))-d(o,\gamma(t))$. A quick verification by the reader would show that the triangle inequality makes the expression inside the limit nonincreasing and bounded below, so that the limit must exist. For background on Busemann functions, see \cite[Section~5.3.3]{BBI}.

For comparison, in a real Hilbert space,
\[
 \|tv-x\|-t\|v\|
 \to -\left\langle\frac{v}{\|v\|},x\right\rangle
 \qquad(v\ne0).
\]
Taking the difference of these limits for $x$ and $y$, then the supremum over $v$, recovers $\|x-y\|$. For the Bures metric, distances to $t^2B$ similarly record fidelity with $B$ after the growing term is removed. The following lemma also allows $B$ to vary with $t$, as needed for $S(t^2B)=t^2S_t(B)$. This lemma says that, essentially, fidelity appears as the finite remainder of a distance to a point travelling towards infinity---which is helpful for our proof because we already know how $S$ behaves on large multiples through the limiting isometry $\Phi$.

\begin{lemma}\label{lem:bures-busemann} Let $B_t,B,D\in\C$, suppose $B_t\to B$ in Bures distance, and assume $B\neq0$. Then
\begin{equation}
 \lim_{t\to\infty}
 \left(\db(t^2B_t,D)-t\sqrt{\Tr B_t}\right)
 =-\frac{\Fid(B,D)}{\sqrt{\Tr B}}.
 \label{eq:bures-busemann}
\end{equation}
The same assertion holds along any sequence $t\to\infty$. \end{lemma}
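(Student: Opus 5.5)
Since $\db(t^2B_t,D)^2=t^2\Tr B_t+\Tr D-2t\,\Fid(B_t,D)$ (using $\Fid(t^2B_t,D)=t\,\Fid(B_t,D)$ from homogeneity of fidelity), the plan is to expand the difference directly. Write $a_t=t\sqrt{\Tr B_t}$ and $E_t=\db(t^2B_t,D)$. Then
\[
 E_t-a_t=\frac{E_t^2-a_t^2}{E_t+a_t}
 =\frac{\Tr D-2t\,\Fid(B_t,D)}{E_t+a_t}.
\]
Dividing numerator and denominator by $t$, the numerator becomes $\Tr D/t-2\Fid(B_t,D)$ and the denominator becomes $E_t/t+\sqrt{\Tr B_t}$.

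The key steps are as follows.

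First, $\Tr B_t=\db(B_t,0)^2\to\db(B,0)^2=\Tr B>0$, since the Bures distance to zero is continuous. So $\sqrt{\Tr B_t}\to\sqrt{\Tr B}$.

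Second, by the triangle inequality, $|E_t-a_t|=|\db(t^2B_t,D)-\db(t^2B_t,0)|\le\db(D,0)=\sqrt{\Tr D}$. Hence $E_t/t\to\sqrt{\Tr B}$, and the denominator tends to $2\sqrt{\Tr B}\neq0$.

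Third, we need $\Fid(B_t,D)\to\Fid(B,D)$. From the definition,
\[
 2\Fid(B_t,D)=\Tr B_t+\Tr D-\db(B_t,D)^2 .
\]
By the triangle inequality, $\db(B_t,D)\to\db(B,D)$, and we already have $\Tr B_t\to\Tr B$. So the right side converges to $2\Fid(B,D)$.

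Combining these, the numerator tends to $-2\Fid(B,D)$ and the denominator to $2\sqrt{\Tr B}$, which gives \eqref{eq:bures-busemann}. Nothing in the argument uses continuity in $t$, so the same proof works verbatim along any sequence $t\to\infty$.

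The only point needing care is the continuity of fidelity in the third step. It is handled by expressing $\Fid$ through $\db$ and traces, which avoids any operator square-root continuity arguments. The argument also has to use $B\neq0$, which it does: it keeps the denominator away from zero. I expect no substantial obstacle.
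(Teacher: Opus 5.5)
Your proof is correct, and its skeleton matches the paper's: the same rationalization, the same use of $\Tr B_t=\db(B_t,0)^2$, and the same bound $|\db(t^2B_t,D)-t\sqrt{\Tr B_t}|\le\sqrt{\Tr D}$ to control the denominator.

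The one genuine difference is how you obtain $\Fid(B_t,D)\to\Fid(B,D)$. You read it off from the definition, $2\Fid(B_t,D)=\Tr B_t+\Tr D-\db(B_t,D)^2$, together with continuity of the metric. The paper instead proves the quantitative estimate \eqref{eq:fidelity-lipschitz}, $|\Fid(C,D)-\Fid(C',D)|\le\sqrt{\Tr D}\,\db(C,C')$, using Uhlmann's amplitude formula and Cauchy--Schwarz. For this lemma your route is shorter and needs only the triangle inequality for $\db$.

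The paper takes the detour because the sharp constant $\sqrt{\Tr D}$ is reused. Applied with a state in place of $D$, it gives the upper bound $\Delta(A,C)\le\db(A,C)$ in Lemma~\ref{lem:profile}, and also the approximation step $|\Delta(A_\eta,C)-\Delta(A,C)|\le\db(A_\eta,A)$ there. Your identity by itself only yields a Lipschitz constant of the form $\tfrac12\bigl(\sqrt{\Tr C}+\sqrt{\Tr C'}+\db(C,D)+\db(C',D)\bigr)$. So if you adopted your proof, that estimate would have to be established separately later.
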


\begin{proof} We put $b_t=\Tr B_t$ and $f_t=\Fid(B_t,D)$. Rationalization gives
\[
 \db(t^2B_t,D)-t\sqrt{b_t}
 =\frac{\Tr D-2tf_t}
 {\db(t^2B_t,D)+t\sqrt{b_t}}.
\]
We first prove the continuity estimate which is needed here. Given $C,C'\in\C$, we choose amplitudes $X,X'$ with $\|X-X'\|_2$ arbitrarily close to $\db(C,C')$. By \eqref{eq:amplitude-fidelity}, $\Fid(C,D)$ and $\Fid(C',D)$ are the suprema, over the same set of amplitudes $Y$ of $D$, of
\[
 \operatorname{Re}\Tr(X^*Y)
 \quad\hbox{and}\quad
 \operatorname{Re}\Tr(X'^*Y).
\]
For real functions $f,g$ on the same set, $|\sup f-\sup g|\leq\sup|f-g|$. Also $\|Y\|_2=\sqrt{\Tr D}$. By the Hilbert-Schmidt Cauchy-Schwarz inequality,
\begin{equation}
 |\Fid(C,D)-\Fid(C',D)|
 \leq\sqrt{\Tr D}\,\db(C,C').
 \label{eq:fidelity-lipschitz}
\end{equation}
Since $B_t\to B$, the reverse triangle inequality gives $\sqrt{b_t}=\db(B_t,0)\to\db(B,0)=\sqrt{\Tr B}$. Equation \eqref{eq:fidelity-lipschitz} also gives $f_t\to\Fid(B,D)$. Finally,
\[
 \left|\db(t^2B_t,D)-t\sqrt{b_t}\right|
 \leq\db(D,0)=\sqrt{\Tr D}.
\]
After the displayed fraction is divided in the numerator and denominator by $t$, its numerator tends to $-2\Fid(B,D)$ and its denominator tends to $2\sqrt{\Tr B}$. \end{proof}

Having established the previous lemma, allowing us to turn distance comparisons (with large multiples) into comparisons of fidelities, our next task is to turn bounds on differences of fidelities into a bound on $\db(A,C)$. For $A,C\in\C$, let
\begin{equation}
 \Delta(A,C):=\sup_{\substack{B\in\C\\ \Tr B=1}}
 |\Fid(B,A)-\Fid(B,C)|.
 \label{eq:profile}
\end{equation}
The authors in \cite{MZC} studied a related quantity using squared fidelity on finite dimensional state spaces.

\begin{lemma}\label{lem:profile} For all $A,C\in\C$,
\begin{equation}
 \frac1{\sqrt2}\db(A,C)\leq\Delta(A,C)\leq\db(A,C).
 \label{eq:profile-inequality}
\end{equation}
If $\dim H\geq2$, the factor $\sqrt2$ in the left inequality is optimal. \end{lemma}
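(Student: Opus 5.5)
The upper bound $\Delta(A,C)\leq\db(A,C)$ follows at once from the continuity estimate \eqref{eq:fidelity-lipschitz} in the proof of Lemma~\ref{lem:bures-busemann}. Applied with $D=B$ and $\Tr B=1$, it gives $|\Fid(B,A)-\Fid(B,C)|\leq\db(A,C)$ for every state $B$; taking the supremum gives the claim. The real work is the lower bound. There I plan to exhibit two states $B_\pm$, built from an optimal pair of amplitudes of $A$ and $C$, for which the fidelity differences can be computed exactly rather than only estimated.

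I first treat the case where $H$ is finite dimensional and $A$ is invertible. Let
\[
 T=A^{-1/2}\bigl(A^{1/2}CA^{1/2}\bigr)^{1/2}A^{-1/2}\geq0,
\]
so that $TAT=C$. With $X=A^{1/2}$ and $Y=TA^{1/2}$, we have $YY^*=C$ and $X^*Y=(A^{1/2}CA^{1/2})^{1/2}\geq0$. Hence the pair $X,Y$ is optimal in \eqref{eq:amplitude-fidelity} and
\[
 \db(A,C)^2=\|(I-T)A^{1/2}\|_2^2 .
\]
The key observation concerns any $K\geq0$ that is a Borel function of $T$. Then $KT\geq0$ and $K\geq0$, so the amplitude pairs $(KA^{1/2},A^{1/2})$ and $(KA^{1/2},TA^{1/2})$ of $(KAK,A)$ and $(KAK,C)$ have positive products $A^{1/2}KA^{1/2}$ and $A^{1/2}KTA^{1/2}$. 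By the remark following \eqref{eq:amplitude-fidelity}, both fidelities are therefore given by ordinary traces, and
\[
 \Fid(KAK,A)-\Fid(KAK,C)=\Tr\bigl(A^{1/2}K(I-T)A^{1/2}\bigr).
\]
Now I take $K=(I-T)_+$ and $K=(I-T)_-$. Since both are functions of $T$, the right-hand side becomes $\pm\|(I-T)_\pm A^{1/2}\|_2^2$. After normalizing $B_\pm=KAK/\Tr(KAK)$, where $\Tr(KAK)=\|KA^{1/2}\|_2^2$, the homogeneity $\Fid(sB,\cdot)=\sqrt s\,\Fid(B,\cdot)$ gives
\[
 |\Fid(B_\pm,A)-\Fid(B_\pm,C)|=\|(I-T)_\pm A^{1/2}\|_2 .
\]
The two parts have orthogonal supports, so the cross term vanishes and
\[
 \|(I-T)_+A^{1/2}\|_2^2+\|(I-T)_-A^{1/2}\|_2^2=\db(A,C)^2 .
\]
Therefore one of the two terms is at least $\db(A,C)^2/2$, which gives $\Delta(A,C)\geq\db(A,C)/\sqrt2$. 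If one of the $K$'s makes $KAK=0$, that case is harmless: the other one then carries all of $\db(A,C)^2$.

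The main obstacle is removing the assumptions that $A$ is invertible and $H$ is finite dimensional. My plan is to approximate. Let $P_n$ be finite-rank projections increasing to the identity on a separable subspace containing the supports of $A$ and $C$, and put $A_n=P_nAP_n+\eta P_n$ and $C_n=P_nCP_n$, viewed as operators on $P_nH$. By \eqref{eq:bures-trace-comparison}, $A_n\to A$ and $C_n\to C$ in $\db$ as $n\to\infty$ and $\eta\to0$. Both sides of \eqref{eq:profile-inequality} are $1$-Lipschitz in each argument with respect to $\db$: for $\Delta$ this follows from \eqref{eq:fidelity-lipschitz}, and for $\db$ from the triangle inequality. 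States on $P_nH$ are also states on $H$, so $\Delta$ computed on $P_nH$ is at most $\Delta$ computed on $H$. Passing to the limit then transfers the finite-dimensional bound to $A$ and $C$.

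For optimality when $\dim H\geq2$, I take orthonormal vectors $x,y$ and set $A=P_x$ and $C=P_y$. Then $\db(A,C)^2=2$, while for every state $B$,
\[
 |\Fid(B,A)-\Fid(B,C)|=\bigl|\langle Bx,x\rangle^{1/2}-\langle By,y\rangle^{1/2}\bigr|\leq1 ,
\]
with equality at $B=P_x$. Hence $\Delta(A,C)=1=\db(A,C)/\sqrt2$, so the factor $\sqrt2$ cannot be improved.
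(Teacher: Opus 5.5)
Your proposal is correct and follows the paper's proof essentially step for step: the same transport operator $T$, the same states $B_\pm$ built from $(I-T)_\pm A(I-T)_\pm$ with fidelities computed as ordinary traces of positive amplitude products, and the same orthogonal rank one example for optimality. The only variation is that you merge the paper's two approximation steps into the single family $P_nAP_n+\eta P_n$, and you use Lipschitz continuity of $\Delta$ on $H$ together with the inclusion of states of $P_nH$ into states of $H$, where the paper instead uses $\Fid(B,A)=\Fid(B,P_nAP_n)$ for $B$ supported on $P_nH$. Since $\|\eta P_n\|_1=\eta\operatorname{rank}(P_n)$, you must let $\eta\to0$ for fixed $n$ before letting $n\to\infty$ (or choose $\eta_n$ with $\eta_n\operatorname{rank}(P_n)\to0$) to get $A_n\to A$.
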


\begin{proof} The upper bound follows from symmetry of fidelity and \eqref{eq:fidelity-lipschitz}: for every state $B$,
\[
 |\Fid(B,A)-\Fid(B,C)|
 =|\Fid(A,B)-\Fid(C,B)|\leq\db(A,C).
\]

The lower bound is immediate if $A=C$, so assume $A\ne C$. First suppose that $H$ is finite dimensional and $A$ is invertible. We seek amplitudes $X=A^{1/2}$ and $Y=TA^{1/2}$ with $T\geq0$. Then $X^*Y=A^{1/2}TA^{1/2}\geq0$, so these amplitudes will attain the Bures distance provided that $YY^*=C$. This last condition is $TAT=C$, or equivalently
\[
 (A^{1/2}TA^{1/2})^2=A^{1/2}CA^{1/2}.
\]
Taking the positive square root gives
\begin{equation}
 T=A^{-1/2}(A^{1/2}CA^{1/2})^{1/2}A^{-1/2}.
 \label{eq:transport-map}
\end{equation}
This is the usual transport operator for the Bures--Wasserstein metric; see \cite[Section~3]{BJL}. For this choice, $YY^*=TAT=C$, and the positivity of $X^*Y$ gives
\begin{equation}
 \db(A,C)^2=\|X-Y\|_2^2=\Tr A(I-T)^2.
 \label{eq:transport-distance}
\end{equation}
An amplitude $Z$ with $\|Z\|_2=1$ defines a state $ZZ^*$. When $A\ne C$, Cauchy--Schwarz suggests choosing $Z$ proportional to $X-Y$ to make $\operatorname{Re}\Tr Z^*(X-Y)$ large. However, this choice need not make both $Z^*X$ and $Z^*Y$ positive, so their traces need not equal the corresponding fidelities. This motivates us to treat the positive and negative parts of $I-T$ separately. For a self-adjoint operator $R$, we write $R_+=(|R|+R)/2$ for its positive part. We let
\[
 R_+=(I-T)_+,\quad p=\Tr AR_+^2,\qquad
 R_-=(T-I)_+,\quad q=\Tr AR_-^2.
\]
The operators $R_+$ and $R_-$ are functions of $T$ and so they commute with $T$; their product is zero, and $I-T=R_+-R_-$. It follows from \eqref{eq:transport-distance} that
\begin{equation}
 p+q=\Tr A(I-T)^2=\db(A,C)^2.
 \label{eq:pq-bures}
\end{equation}
If $p>0$, then $\|R_+A^{1/2}\|_2^2=p$. Normalizing this operator gives a unit amplitude and hence a state:
\[
 Z=\frac{R_+A^{1/2}}{\sqrt p},\qquad
 B_+=ZZ^*=\frac{R_+AR_+}{p}.
\]
We have
\[
 Z^*X=\frac{A^{1/2}R_+A^{1/2}}{\sqrt p}\geq0.
\]
Moreover $R_+T\geq0$, because $R_+$ commutes with the positive operator $T$, and hence
\[
 Z^*Y=\frac{A^{1/2}R_+TA^{1/2}}{\sqrt p}\geq0.
\]
The equality case following \eqref{eq:amplitude-fidelity} now gives
\[
 \Fid(B_+,A)=\Tr(Z^*X),\qquad
 \Fid(B_+,C)=\Tr(Z^*Y).
\]
We have $R_+(I-T)=R_+^2$, so as a result,
\begin{equation}
 \Fid(B_+,A)-\Fid(B_+,C)
 =\Tr Z^*(X-Y)
 =\frac{\Tr(AR_+^2)}{\sqrt p}=\sqrt p.
 \label{eq:positive-profile}
\end{equation}
The same argument, on the range where $T-I$ is positive, shows that if $q>0$, the state $B_-=R_-AR_-/q$ satisfies
\begin{equation}
 \Fid(B_-,C)-\Fid(B_-,A)=\sqrt q.
 \label{eq:negative-profile}
\end{equation}
By \eqref{eq:pq-bures}, at least one of $p,q$ is at least $\db(A,C)^2/2$. This proves the lower bound when $A>0$ and $H$ is finite dimensional.

Still in finite dimension, the case where $A$ is singular follows by replacing it with $A_\eta=A+\eta I$ and letting $\eta\to0^+$. Indeed, $A_\eta\to A$ in trace norm and hence in Bures distance by Lemma \ref{lem:bures-complete}. Equation \eqref{eq:fidelity-lipschitz}, together with symmetry of fidelity, shows that
\[
 |\Delta(A_\eta,C)-\Delta(A,C)|\leq\db(A_\eta,A).
\]
We may therefore pass to the limit in the inequality already proved for $A_\eta$.

We now suppose that $H$ is infinite dimensional and work on the closed span $M$ of the ranges of $A$ and $C$. The space $M$ is separable because the range of every trace-class operator is separable. We choose finite rank projections $P_n$ on $M$ which increase strongly to $I_M$. Finite rank operators are dense in the trace-class, so
\[
 A_n=P_nAP_n\to A,\qquad
 C_n=P_nCP_n\to C
\]
in trace norm and hence in Bures distance. If a state $B$ is supported on $P_nH$, then $B^{1/2}=P_nB^{1/2} = B^{1/2} P_n$ and
\[
 B^{1/2}AB^{1/2}=B^{1/2}A_nB^{1/2}.
\]
Thus $\Fid(B,A)=\Fid(B,A_n)$, and the same is true for $C$. The finite dimensional result on $P_nH$ then gives
\[
 \Delta(A,C)\geq\frac1{\sqrt2}\db(A_n,C_n).
\]
We let $n\to\infty$ to obtain the lower bound.

We see that the constant cannot be improved by taking $A=aP$ and $C=aQ$ with $a>0$ and orthogonal rank one projections $P,Q$. The orthogonality of $P$ and $Q$ gives $\Fid(A,C)=0$, so $\db(A,C)=\sqrt{2a}$. For any state $B$,
\[
 \Fid(B,aP)=\sqrt{a\Tr(BP)},\qquad
 \Fid(B,aQ)=\sqrt{a\Tr(BQ)}.
\]
Since $0\leq\Tr(BP),\Tr(BQ)\leq1$, both fidelities lie in $[0,\sqrt a]$, and their absolute difference is at most $\sqrt a$. Taking $B=P$ gives fidelities $\sqrt a$ and $0$, respectively, so this bound is attained, letting one conclude
\[
 \Delta(A,C)=\sqrt a=\frac1{\sqrt2}\db(A,C),
\]
which proves that the constant is optimal. \end{proof}

\begin{proof}[Proof of Theorem \ref{thm:bures-main}] We let $\Phi(A)=UAU^*$ be the exact isometry from Lemma \ref{lem:phi-onto}. By surjectivity of $S$, we may choose $X_0\in\C$ such that $S(X_0)=0$.

We fix $B\neq0$ and $D,E\in\C$. We apply the approximate isometry inequality to the pairs $(t^2B,D)$ and $(t^2B,E)$:
\begin{align}
 \Big|&\big[\db(S(t^2B),S(D))-\db(S(t^2B),S(E))\big]
 \notag\\
 &-\big[\db(t^2B,D)-\db(t^2B,E)\big]\Big|
 \leq2\varepsilon.
 \label{eq:two-distance-comparison}
\end{align}
Subtracting the two distances in each bracket cancels their common growing term. Each square bracket can be written as the difference of two expressions of the form appearing in Lemma \ref{lem:bures-busemann}. For the first bracket we use $S(t^2B)=t^2S_t(B)$ and $S_t(B)\to\Phi(B)$ along $t=2^n\to\infty$. For the second we use the constant family $B_t=B$. Since $\Phi$ fixes zero and is an isometry,
\[
 \sqrt{\Tr\Phi(B)}=\db(\Phi(B),0)=\db(B,0)=\sqrt{\Tr B}.
\]
We pass to the limit along $t=2^n\to\infty$ in \eqref{eq:two-distance-comparison}. If
\begin{equation}
 e_D(B)=\Fid(\Phi(B),S(D))-\Fid(B,D),
 \label{eq:fidelity-error}
\end{equation}
then the result is
\begin{equation}
 |e_D(B)-e_E(B)|\leq2\varepsilon\sqrt{\Tr B}.
 \label{eq:fidelity-error-difference}
\end{equation}

Estimate \eqref{eq:fidelity-error-difference} bounds differences of errors. To bound an individual error, we compare it with a nonnegative error and a nonpositive error, supplied by 0 and $X_0$ respectively. Since $\Fid(B,0)=0$,
\[
 e_0(B)=\Fid(\Phi(B),S(0))\geq0.
\]
On the other hand, $S(X_0)=0$, so
\[
 e_{X_0}(B)=-\Fid(B,X_0)\leq0.
\]
Using $E=0$ in \eqref{eq:fidelity-error-difference}, we obtain
\[
 e_D(B)\geq e_0(B)-2\varepsilon\sqrt{\Tr B}
 \geq-2\varepsilon\sqrt{\Tr B}.
\]
Using $E=X_0$ gives
\[
 e_D(B)\leq e_{X_0}(B)+2\varepsilon\sqrt{\Tr B}
 \leq2\varepsilon\sqrt{\Tr B}.
\]
Thus
\begin{equation}
 |\Fid(\Phi(B),S(D))-\Fid(B,D)|
 \leq2\varepsilon\sqrt{\Tr B}.
 \label{eq:profile-control}
\end{equation}
Since $\Phi$ preserves fidelity, $\Fid(B,D)=\Fid(\Phi(B),\Phi(D))$. Moreover, $\Phi$ maps the state space onto itself. Taking the supremum in \eqref{eq:profile-control} over states $B$ gives
\[
 \Delta(S(D),\Phi(D))\leq2\varepsilon.
\]
Lemma~\ref{lem:profile} yields
\[
 \db(S(D),UDU^*)=\db(S(D),\Phi(D))
 \leq2\sqrt2 \varepsilon.
\]

It remains to prove the assertions concerning zero. Let us use the notation
\[
 P=S(0),\qquad Q=\Phi(X_0),\qquad
 a=\sqrt{\Tr P},\qquad b=\sqrt{\Tr Q}.
\]
Since $\Phi$ preserves trace, $b=\db(X_0,0)$. By equation~\eqref{eq:fidelity-error-difference}, with $D=0$ and $E=X_0$,
\[
 \Fid(C,P)+\Fid(C,Q)\leq2\varepsilon,
 \qquad C\in\St,
\]
because $\Phi$ preserves fidelity and maps $\St$ onto itself.

If $a^2+b^2=0$, both assertions are immediate. Assume now that $a^2+b^2\ne0$; since $P^{1/2}(P+Q)P^{1/2}\geq P^2$, monotonicity of the positive square root and symmetry of fidelity give $\Fid(P+Q,P)\geq\Tr P$ and similarly, $\Fid(P+Q,Q)\geq\Tr Q$, so that
\[
 2\varepsilon
 \geq\Fid(C,P)+\Fid(C,Q)
 \geq\frac{\Tr P+\Tr Q}{\sqrt{a^2+b^2}}
 =\sqrt{a^2+b^2}, \qquad  C=\frac{P+Q}{a^2+b^2}.
\]
The approximate isometry inequality for $(0,X_0)$ also gives $|a-b|\leq\varepsilon$.

If $M:=\max\{a,b\}>\varepsilon$, then these inequalities imply
\[
 M^2+(M-\varepsilon)^2\leq4\varepsilon^2,
\]
and so
\[
 M\leq\frac{1+\sqrt7}{2} \varepsilon.
\]
The same bound is immediate when $M\leq\varepsilon$. The argument applies to every preimage $X_0$ of zero, which proves both assertions. \end{proof}

\begin{remark} The factor $2$ comes from comparing two distances, each with error at most $\varepsilon$. The factor $\sqrt2$ comes from Lemma~\ref{lem:profile}; its orthogonal projection example shows that this factor is necessary. (These facts are about the profile, and do not determine the best constant in Theorem~\ref{thm:bures-main}.) \end{remark}

\begin{remark}\label{rem:bures-trace-preserving} Under the assumptions of Theorem~\ref{thm:bures-main}, if $S$ also preserves trace, then
\[
 \db(S(D),UDU^*)\leq\sqrt2 \varepsilon,
 \qquad D\in\C.
\]
Indeed, since $S$ preserves trace, $\Tr S(t^2B) = \Tr (t^2B) = t^2$ for $B\in\St$. Subtracting $t$ from both distances in the $\varepsilon$-isometry inequality for $t^2B,D$, and applying Lemma~\ref{lem:bures-busemann} along $t=2^n\to\infty$, gives
\[
 |\Fid(\Phi(B),S(D))-\Fid(B,D)|\leq\varepsilon.
\]
Since $\Phi$ preserves fidelity and maps $\St$ onto itself, taking the supremum over $B\in\St$ yields $\Delta(S(D),\Phi(D))\leq\varepsilon$. Lemma~\ref{lem:profile} gives the claimed bound. \end{remark}

\section{Trace norm stability on the cone}

For the trace metric, we combine the operator norm estimate of \cite[Theorem~3.15]{DLLZ} with the following theorem. Only the finiteness of that estimate is needed; its numerical constant does not enter the trace norm bound.

\begin{theorem}\label{thm:trace-rigidity} Let $S:(\C,\done)\to(\C,\done)$ be an $\varepsilon$-isometry. If
\begin{equation}
 \sup_{A\in\C}\|S(A)-A\|_\infty<\infty,
 \label{eq:operator-bound}
\end{equation}
then
\begin{equation}
 \bigl\|(S(D)-S(E))-(D-E)\bigr\|_1\leq3\varepsilon,
 \qquad D,E\in\C.
 \label{eq:trace-difference}
\end{equation}
\end{theorem}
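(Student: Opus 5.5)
The plan is to argue through trace norm duality \eqref{eq:trace-duality}, in the spirit of the Cheng--Dong weak stability estimate \eqref{eq:weak-stability-background}. We use large multiples of positive operators as ``points at infinity'', and use the operator norm hypothesis \eqref{eq:operator-bound} to pin down the norming functionals at those points.

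Fix $D,E\in\C$ and put $Z=(S(D)-S(E))-(D-E)$, a self-adjoint trace-class operator. By \eqref{eq:trace-duality} and a finite rank approximation, it suffices to show $\Tr(KZ)\leq3\varepsilon$ for every $K=P-Q$, where $P,Q$ are mutually orthogonal finite rank projections. Given such $P,Q$, we consider the test points $tP$ and $tQ$ for $t\to\infty$. These are positive operators with complementary (orthogonal) supports. Write $M=\sup_A\|S(A)-A\|_\infty<\infty$.

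The first step is a Busemann-type computation for the trace norm. For a finite rank projection $P$ and a fixed $Y\in S_1(H)_{\mathrm{sa}}$, we would show
\[
 \lim_{t\to\infty}\bigl(\|tP-Y\|_1-t\Tr P\bigr)=-\Tr(PY)+\|(I-P)Y(I-P)\|_1 .
\]
The proof compresses to $P$ and $I-P$. For large $t$, the block $tP-PYP$ is positive and dominates, so the off-diagonal blocks contribute only $O(1/t)$.

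The second step uses \eqref{eq:operator-bound}. Since $\|S(tP)-tP\|_\infty\leq M$, we have $S(tP)=tP+R_t$ with $R_t$ uniformly bounded in operator norm. Let $J_t$ be a norming sign operator for $S(tP)-S(D)$. For large $t$, the spectral subspace of $S(tP)$ with eigenvalues of order $t$ is close to $PH$. Consequently $J_t$ acts as the identity on (a perturbation of) $PH$ up to $o(1)$. The aim is the estimate
\[
 \|S(tP)-S(D)\|_1-\|S(tP)-S(E)\|_1\leq-\Tr\bigl(P(S(D)-S(E))\bigr)+\|(I-P)S(D)(I-P)\|_1-\|(I-P)S(E)(I-P)\|_1+o(1).
\]
This follows from the same block computation applied to $S(tP)$ in place of $tP$. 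In that computation the bounded perturbation $R_t$ must be shown to cancel between the $D$ and $E$ terms to first order. That cancellation is where finiteness of $M$, but not its value, is used.

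The third step combines the two displays with the $\varepsilon$-isometry inequality for the pairs $(tP,D)$ and $(tP,E)$, as in \eqref{eq:two-distance-comparison}. This controls $\Tr(PZ)$ together with the $(I-P)$-corner terms, at a cost of $2\varepsilon$. Doing the same with $tQ$ controls the $Q$-part. We then add the two resulting inequalities, using that $P$ and $Q$ have complementary supports. In the sum the unwanted corner norms should combine, via the triangle inequality, into $\Tr((P-Q)Z)$. A third $\varepsilon$ enters from comparing $S(D)$ with $S(E)$ directly, which handles the part of $Z$ living outside $P+Q$. This should produce the total $3\varepsilon$.

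The main obstacle is the second step: identifying the limit of the norming functionals $J_t$ for $S(tP)-S(\cdot)$ with the prescribed sign operator. Without a bound like \eqref{eq:operator-bound}, $S(tP)$ could have its large part on a subspace unrelated to $PH$, and the identification would fail. My first attempt would be to use Weyl-type perturbation of the spectral projections of $tP+R_t$ for a gap of size $t\gg M$. This shows that the spectral projection of $S(tP)$ for $[t/2,\infty)$ converges to $P$ in norm at rate $O(M/t)$. It would then remain to check that the resulting errors are $o(1)$ in trace norm after pairing with the fixed trace-class operators $S(D),S(E)$.
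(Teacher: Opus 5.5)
\paragraph{There is a genuine gap: Step~2 is false.}
Write $S(tP)=tP+R_t$. Your Step~1 computation, with $Y-R_t$ in place of $Y$, produces the corner term $\|(I-P)(Y-R_t)(I-P)\|_1$. The operator $(I-P)R_t(I-P)=(I-P)S(tP)(I-P)\geq0$ is only bounded, not $o(1)$, and it interacts differently with $S(D)$ and with $S(E)$. So nothing cancels.

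Here is a concrete counterexample. On $H=\mathbb C^2$, put $S(0)=0$ and $S(A)=A+r\,e_2\otimes e_2$ for $A\neq0$. This is an $r$-isometry with $\sup_A\|S(A)-A\|_\infty=r$. Take $P=e_1\otimes e_1$, $D=0$ and $E=\eta\,e_2\otimes e_2$. The left side of your Step~2 inequality is $(t+r)-(t+\eta)=r-\eta$, while the right side is $-(\eta+r)+o(1)$. The estimate therefore fails by $2r=2\varepsilon$.

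The structural reason is that the norming sign operator of $S(tP)-S(D)$ is pinned down only on (a perturbation of) $PH$. Your Weyl-type argument correctly handles that part. On $(I-P)H$, however, this operator is the sign of $(I-P)(R_t-S(D))(I-P)$, which the hypothesis does not control. A correct single-ray argument therefore gives only
\[
 \Tr(PZ)\leq\|(I-P)(S(D)-S(E))(I-P)\|_1-\Tr\bigl((I-P)(D-E)\bigr)+2\varepsilon .
\]
This bound involves $S(D)-S(E)$ rather than $Z$, so it is useless.

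The Step~3 bookkeeping does not close either. Two single-ray comparisons already cost $4\varepsilon$. The pair $(D,E)$ controls only the scalar $\|S(D)-S(E)\|_1$, so it cannot absorb the part of $Z$ outside $P+Q$.

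\paragraph{The missing idea.}
You need to chain through both rays and norm the difference of the two \emph{far} images, not the difference between a far image and $S(D)$.

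Let $M$ be a separable subspace carrying $D,E,S(D),S(E)$. Let $J=P-Q$ be the sign of $Z$ on $M$, with $P+Q=I_M$. Take $A,C\geq0$ with $s(A)=P$ and $s(C)=Q$, and let $J_n$ be a sign operator norming $S(nA)-S(nC)$. Then
\[
 \Tr\bigl(J_n(S(D)-S(E))\bigr)\leq\|S(D)-S(nC)\|_1-\|S(nA)-S(nC)\|_1+\|S(nA)-S(E)\|_1 ,
\]
and the middle term is exact. Three uses of the $\varepsilon$-isometry inequality bound the right side by
\[
 \bigl(\|nC-D\|_1-n\Tr C\bigr)+\bigl(\|nA-E\|_1-n\Tr A\bigr)+3\varepsilon .
\]
By Lemma~\ref{lem:trace-busemann}, this tends to $\Tr\bigl(J(D-E)\bigr)+3\varepsilon$. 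So the third $\varepsilon$ comes from the pair $(nA,nC)$, not from $(D,E)$.

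The operator norm hypothesis enters only through the trace norm convergence $S(nB)/n\to B$ of Lemma~\ref{lem:trace-scaling}. Because $s(A)+s(C)=I_M$, the only self-adjoint contraction $K$ with $\Tr(K(A-C))=\Tr A+\Tr C$ equals $J$ on $M$. This forces $J_n\to J$ strongly on $M$, hence $\Tr(J_n(S(D)-S(E)))\to\Tr(J(S(D)-S(E)))$, and so $\|Z\|_1=\Tr(JZ)\leq3\varepsilon$.

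Your finite-rank reduction can be kept. Choose $P+Q$ to nearly exhaust the supports of $D,E,S(D),S(E)$. Then the undetermined part of the limit of $J_n$ contributes only a vanishing error.
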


Neither surjectivity nor preservation of zero is assumed here. If $S(0)=0$, taking $E=0$ gives the bound $3\varepsilon$ in the standard case. The $\delta$-surjective corollary also uses the passage from $\delta$-surjectivity to surjectivity in \cite[Lemma~5.1]{DLL}, recalled as \cite[Lemma~2.7]{DLLZ}.

\subsection{Two trace norm limits}\label{sec:trace-lemmas}

We first show that $S(nB)/n$ converges to $B$ in trace norm. We then compute the finite term left in $\|tB-D\|_1$ after subtracting $t\Tr B$; this term will recover the sign pairing used in the next section.

\begin{lemma}\label{lem:trace-scaling} Under the assumptions of Theorem \ref{thm:trace-rigidity},
\begin{equation}
 \frac{S(nB)}n\to B
 \quad\hbox{in }S_1(H)
 \label{eq:trace-scaling}
\end{equation}
for every $B\in\C$. \end{lemma}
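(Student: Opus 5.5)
Fix $B\in\C$ and set $M:=\sup_{A\in\C}\|S(A)-A\|_\infty<\infty$ as in \eqref{eq:operator-bound}. The proof has three steps: compute the trace of $S(nB)$, get operator norm convergence from \eqref{eq:operator-bound}, and upgrade it to trace norm convergence with a tightness argument.

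\emph{Trace.} Put $P=S(0)$. The $\varepsilon$-isometry inequality applied to the pair $(nB,0)$ gives $\bigl|\|S(nB)-P\|_1-n\Tr B\bigr|\leq\varepsilon$. Since $\bigl|\|S(nB)\|_1-\|S(nB)-P\|_1\bigr|\leq\Tr P$ and $\|S(nB)\|_1=\Tr S(nB)$, we get
\[
 \Bigl|\Tr\frac{S(nB)}n-\Tr B\Bigr|\leq\frac{\varepsilon+\Tr P}{n}\to0 .
\]

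\emph{Operator norm convergence.} Directly from \eqref{eq:operator-bound},
\[
 \Bigl\|\frac{S(nB)}n-B\Bigr\|_\infty\leq\frac Mn\to0 .
\]
Operator norm convergence alone does not give trace norm convergence in infinite dimension, so one more step is needed.

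\emph{Tightness and trace norm convergence.} Write $X_n=S(nB)/n\geq0$. Let $\eta>0$. Choose a finite rank spectral projection $P_0$ of $B$ with $\Tr\bigl(B(I-P_0)\bigr)<\eta$. Since $P_0$ has finite rank, operator norm convergence gives
\[
 \|P_0X_nP_0-P_0BP_0\|_1\leq(\operatorname{rank}P_0)\,\|X_n-B\|_\infty\to0 .
\]
Hence $\Tr(P_0X_n)\to\Tr(P_0B)$. Combined with $\Tr X_n\to\Tr B$, this yields
\[
 \limsup_n\Tr\bigl(X_n(I-P_0)\bigr)=\Tr\bigl(B(I-P_0)\bigr)<\eta .
\]
The off-diagonal and corner blocks of $X_n$ are now small. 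For the positive operator $X_n$, \eqref{eq:schatten-holder} gives
\[
 \|P_0X_n(I-P_0)\|_1\leq\|P_0X_n^{1/2}\|_2\,\|X_n^{1/2}(I-P_0)\|_2=\sqrt{\Tr(P_0X_n)}\,\sqrt{\Tr\bigl(X_n(I-P_0)\bigr)},
\]
and $\|(I-P_0)X_n(I-P_0)\|_1=\Tr\bigl((I-P_0)X_n\bigr)$. The same bounds hold for $B$; since $P_0$ is a spectral projection of $B$, the off-diagonal blocks of $B$ actually vanish. Decomposing $X_n-B$ into the four blocks relative to $P_0$ gives
\[
 \limsup_n\|X_n-B\|_1\leq C\bigl(\eta+\sqrt{\eta\Tr B}\bigr)
\]
for an absolute constant $C$. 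Since $\eta>0$ was arbitrary, $X_n\to B$ in $S_1(H)$.

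The main obstacle is this third step: passing from operator norm convergence plus convergence of traces to trace norm convergence. Positivity of $X_n$ makes it work, because it is what allows the off-diagonal blocks to be controlled by the Cauchy--Schwarz bound above. If $B=0$, the trace estimate alone already gives $\|X_n\|_1=\Tr X_n\to0$, so no separate argument is needed.
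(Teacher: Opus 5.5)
Your proposal is correct and follows essentially the same route as the paper: the trace bound from the pair $(nB,0)$ and the reverse triangle inequality, the $M/n$ operator norm bound, and the upgrade to trace norm via a finite-rank spectral projection of $B$ with Cauchy--Schwarz control of the off-diagonal blocks. The only difference is cosmetic: you use directly that the off-diagonal blocks of $B$ vanish, whereas the paper applies its general tail estimate \eqref{eq:tail-estimate} to both $T_j$ and $T$, which gives a slightly larger but equally harmless constant.
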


\begin{proof} We let $M_0$ denote the supremum in \eqref{eq:operator-bound}, and put $B_n=S(nB)/n$ and $P_0=S(0)$. Then
\begin{equation}
 \|B_n-B\|_\infty\leq\frac{M_0}{n},
 \qquad
 |\Tr B_n-\Tr B|\leq\frac{\varepsilon+\Tr P_0}{n}.
 \label{eq:operator-trace-convergence}
\end{equation}
The first estimate follows directly from \eqref{eq:operator-bound}. For the second, the approximate isometry inequality gives
\[
 \bigl|\|S(nB)-P_0\|_1-n\Tr B\bigr|\leq\varepsilon.
\]
Since $S(nB)\geq0$, the reverse triangle inequality gives
\[
 \bigl|\Tr S(nB)-\|S(nB)-P_0\|_1\bigr|\leq\Tr P_0.
\]
Combining this with the preceding estimate and dividing by $n$ proves the second inequality in \eqref{eq:operator-trace-convergence}.

In infinite dimension, operator norm convergence alone is insufficient: a rank $n$ projection divided by $n$ has operator norm $1/n$ but trace norm $1$. The convergence of traces supplies the additional control. We first work on a finite dimensional subspace containing almost all of the limit's trace. Operator norm convergence controls the compression to this subspace, while convergence of traces controls the trace outside it. Suppose that $T_j,T \in \C$ (in particular, both are positive), that $T_j\to T$ in operator norm, and that $\Tr T_j\to\Tr T$. We claim that $T_j\to T$ in trace norm.

We let $F$ be a finite rank spectral projection of $T$. Since $F$ has finite rank,
\[
 \|F(T_j-T)F\|_1
 \leq\operatorname{rank}(F)\|T_j-T\|_\infty\to0.
\]
It follows that $\Tr(FT_j)\to\Tr(FT)$. Together with convergence of the full traces, this gives
\[
 \Tr(F^\perp T_j)=\Tr T_j-\Tr(FT_j)\to
 \Tr T-\Tr(FT)=\Tr(F^\perp T).
\]

We need one estimate for the part outside $F$. If $X\geq0$ is trace-class, then
\[
 X-FXF=FXF^\perp+F^\perp XF+F^\perp XF^\perp.
\]
By \eqref{eq:schatten-holder},
\[
 \|FXF^\perp\|_1
 =\|FX^{1/2}X^{1/2}F^\perp\|_1
 \leq\sqrt{\Tr(FX)\Tr(F^\perp X)}.
\]
The other off diagonal term has the same norm, while $\|F^\perp XF^\perp\|_1=\Tr(F^\perp X)$. Therefore
\begin{equation}
 \|X-FXF\|_1
 \leq2\sqrt{\Tr X\,\Tr(F^\perp X)}+\Tr(F^\perp X).
 \label{eq:tail-estimate}
\end{equation}
Because $T$ is positive and trace-class, we can choose $F$ so that $\Tr(F^\perp T)$ is as small as desired. The convergence of the preceding tail traces makes $\Tr(F^\perp T_j)$ small for all sufficiently large $j$.

We now use
\[
 \|T_j-T\|_1
 \leq\|T_j-FT_jF\|_1+\|F(T_j-T)F\|_1+\|T-FTF\|_1.
\]
For fixed $F$, the middle term tends to zero because $F$ has finite rank and $T_j\to T$ in operator norm. Applying \eqref{eq:tail-estimate} to the other two terms, and using $\Tr T_j\to\Tr T$ and $\Tr(F^\perp T_j)\to\Tr(F^\perp T)$, gives
\[
 \limsup_{j\to\infty}\|T_j-T\|_1
 \leq4\sqrt{\Tr T\,\Tr(F^\perp T)}
      +2\Tr(F^\perp T).
\]
We can make the right side arbitrarily small by choosing $F$ so that $\Tr(F^\perp T)$ is sufficiently small. Thus $T_j\to T$ in trace norm.

We now apply this result with $T_j=B_j=S(jB)/j$ and $T=B$. These operators are positive and trace-class, and \eqref{eq:operator-trace-convergence} gives both $\|B_j-B\|_\infty\to0$ and $\Tr B_j\to\Tr B$. Consequently, $\|B_j-B\|_1\to0$, which is precisely \eqref{eq:trace-scaling}. \end{proof}

\begin{lemma}\label{lem:trace-busemann} If $B,D\in\C$, then
\begin{equation}
 \lim_{t\to\infty}\bigl(\|tB-D\|_1-t\Tr B\bigr)
 =\Tr D-2\Tr(s(B)D).
 \label{eq:trace-busemann}
\end{equation}
\end{lemma}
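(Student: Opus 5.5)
We prove \eqref{eq:trace-busemann} by squeezing $\|tB-D\|_1-t\Tr B$ between a lower bound from trace norm duality \eqref{eq:trace-duality} and an upper bound from an explicit block decomposition with respect to $P=s(B)$; both bounds will converge to $\Tr D-2\Tr(PD)$. The case $B=0$ is trivial, since then $P=0$ and both sides equal $\Tr D$, so assume $B\neq0$.

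\emph{Lower bound.} We test against the self-adjoint unitary $K=2P-I=P-P^\perp$, which has $\|K\|_\infty\leq1$. Since $PB=B$, we get $\Tr(K(tB-D))=t\Tr B-\Tr(PD)+\Tr(P^\perp D)$, so
\[
 \|tB-D\|_1-t\Tr B\geq\Tr D-2\Tr(PD)
\]
for every $t>0$, with no limit needed.

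\emph{Upper bound.} We approximate the support from inside. For $\eta>0$, let $P_\eta$ be the spectral projection of $B$ for $[\eta,\infty)$. It has finite rank, $P_\eta\leq P$, $P_\eta\to P$ strongly as $\eta\to0^+$, and $B\geq\eta P_\eta$. We write $D=P_\eta DP_\eta+R_\eta$ with $R_\eta=D-P_\eta DP_\eta$. Applying \eqref{eq:tail-estimate} with $F=P_\eta$ and $X=D$ gives
\[
 \|R_\eta-P_\eta^\perp DP_\eta^\perp\|_1\leq2\sqrt{\Tr D\,\Tr(P_\eta^\perp D)}+\Tr(P_\eta^\perp D)\cdot0+\cdots,
\]
or more simply we keep the off-diagonal terms as an error $\rho_\eta$ of size at most $2\sqrt{\Tr(P_\eta D)\Tr(P_\eta^\perp D)}$. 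Then
\[
 \|tB-D\|_1\leq\|tB-P_\eta DP_\eta\|_1+\|P_\eta^\perp DP_\eta^\perp\|_1+\|\rho_\eta\|_1.
\]
Once $t\eta\geq\|D\|_\infty$, we have $tB-P_\eta DP_\eta\geq t\eta P_\eta-P_\eta DP_\eta\geq0$, so its trace norm equals $t\Tr B-\Tr(P_\eta D)$. This yields
\[
 \limsup_{t\to\infty}\bigl(\|tB-D\|_1-t\Tr B\bigr)\leq\Tr D-2\Tr(P_\eta D)+2\sqrt{\Tr(P_\eta D)\Tr(P_\eta^\perp D)}.
\]

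This gives the wrong limit as $\eta\to0$: the term $\Tr(P^\perp D)$ should appear without the cross term. The main obstacle is that the off-diagonal block $P_\eta DP_\eta^\perp$ must not be discarded crudely. The cross terms linking $\operatorname{ran}P$ to $\ker B$ are genuinely there, and they cost only $O(1/t)$, because the large block $tB$ absorbs them: a Schur-complement effect.

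To handle this, we first pass to finite rank. Replace $D$ by $D_n=F_nDF_n$ with $F_n$ finite rank, $F_n\geq P_\eta$, and $\|D-D_n\|_1\to0$. Both sides of the inequality are $1$-Lipschitz in $D$ in trace norm, uniformly in $t$, so this replacement is harmless, and it reduces the problem to finite dimensions.

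In finite dimension, write $tB-D$ in block form relative to $P_\eta\oplus P_\eta^\perp$. The upper-left block $tB_\eta-D_{11}$ is invertible and of order $t$. A congruence by a block-triangular matrix $I+O(1/t)$ reduces $tB-D$ to the block diagonal $(tB_\eta-D_{11})\oplus(tB'-D_{22}-O(1/t))$, where $B'=P_\eta^\perp BP_\eta^\perp$ has norm less than $\eta$. Because the congruence is $I+O(1/t)$, it changes the trace norm by only $O(1)\cdot\|tB-D\|_1/t$, which is bounded. The second block has trace norm at most $t\Tr B'+\Tr(P_\eta^\perp D)+o(1)$.

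Combining these estimates gives
\[
 \limsup_{t\to\infty}\bigl(\|tB-D\|_1-t\Tr B\bigr)\leq\Tr D-2\Tr(P_\eta D)+c(\eta),
\]
where $c(\eta)$ collects the contribution of the small eigenvalues of $B$ below $\eta$ and tends to zero. Letting $\eta\to0$ and using $\Tr(P_\eta D)\to\Tr(PD)$ then matches the lower bound and proves \eqref{eq:trace-busemann}.

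The step I expect to be hardest is controlling the block of $B$ with eigenvalues below $\eta$, where $tB$ need not dominate $D$. I would handle it by splitting $P$ into $P_\eta$ and $P-P_\eta$ and bounding the remaining defect by $2\Tr((P-P_\eta)D)$, which tends to zero as $\eta\to0$.
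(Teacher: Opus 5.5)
Your lower bound is correct. It is the paper's lower bound evaluated at the optimal test operator $K=2s(B)-I$.

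The upper bound has a genuine gap at the congruence step. You are computing the $O(1)$ remainder of a quantity of size $t\Tr B$, so an error that is merely ``bounded'' is fatal, and boundedness is all your estimate gives. Write $L=I+X$ with $\|X\|_\infty\leq\|D\|_\infty/(t\eta-\|D\|_\infty)$. Then the bound $\|LNL^*\|_1\leq\|L\|_\infty^2\|N\|_1$ exceeds $\|N\|_1$ by an amount of order $\|D\|_\infty\Tr B/\eta$. This does not vanish as $t\to\infty$, and it blows up as $\eta\to0^+$, so nothing in your argument justifies $c(\eta)\to0$. The block you flag as hardest, the eigenvalues of $B$ below $\eta$, is actually harmless: the triangle inequality there loses only $2\Tr((P-P_\eta)D)\to0$. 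The real difficulty is the off-diagonal coupling, and the crude congruence estimate does not control it. (Separately, replacing $D$ by $F_nDF_n$ does not reduce to finite dimensions, since $B$ may have infinite rank. It is also unnecessary, because $P_\eta$ already has finite rank.)

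The missing idea is to apply the multiplicative perturbation only to a part of size $O(1)$. Put $M=tB-D$. Since $\Tr M=t\Tr B-\Tr D$, the identity $\|M\|_1=\Tr M+2\Tr M_-$ shows that the lemma is equivalent to $\Tr M_-\to\Tr(P^\perp D)$.
\begin{itemize}
\item From $M=LNL^*\geq-LN_-L^*$, and the fact that $M\geq-Y$ with $Y\geq0$ implies $\Tr M_-\leq\Tr Y$, you get $\Tr M_-\leq\|L\|_\infty^2\Tr N_-$.
\item For large $t$ the block $tB_\eta-D_{11}$ is positive, so $N_-$ is the negative part of the Schur complement.
\item That complement is bounded below by $-D_{22}-D_{21}(tB_\eta-D_{11})^{-1}D_{12}$, whose second term has trace $O(1/t)$.
\end{itemize}
Hence $\Tr M_-\leq(1+O(1/t))\bigl(\Tr(P_\eta^\perp D)+O(1/t)\bigr)$. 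Letting $t\to\infty$ and then $\eta\to0^+$ closes the argument.

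The paper avoids blocks altogether. It reads the limit as the one-sided derivative of $\|\cdot\|_1$ at $B$ in the direction $-D$. It bounds the difference quotient by $-\Tr(K_hD)$ for self-adjoint contractions $K_h$ norming $B-hD$, using $\Tr(K_hB)\leq\Tr B$. It then shows that any weak star cluster point $K$ satisfies $\Tr(KB)=\Tr B$. This forces $K=I$ on $s(B)H$, and hence $-\Tr(KD)\leq\Tr D-2\Tr(s(B)D)$.
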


\begin{proof} For scalars $b,d\geq0$,
\[
 \lim_{t\to\infty}(|tb-d|-tb)
 =
 \begin{cases}
  -d,&b>0,\\
  d,&b=0.
 \end{cases}
\]
For diagonal operators, this subtracts the entries of $D$ where $B$ is nonzero and adds them where $B$ is zero, giving $\Tr D-2\Tr(s(B)D)$. We now prove that the same formula holds without a commutativity assumption.

We regard $S_1(H)_{\mathrm{sa}}$ as a real Banach space. With $h=1/t$, the left side is the one sided directional derivative
\[
 \lim_{h\to0^+}\frac{\|B-hD\|_1-\|B\|_1}{h}.
\]
We explain the norm derivative which is used here. Trace norm duality \eqref{eq:trace-duality} shows that the derivative is
\begin{equation}
 \max\{-\Tr(KD):K=K^*,\ \|K\|_\infty\leq1,\
                         \Tr(KB)=\|B\|_1\}.
 \label{eq:subdifferential}
\end{equation}
Indeed, we fix any $K$ in the displayed set, then
\[
 \|B-hD\|_1\geq\Tr(K(B-hD))
 =\|B\|_1-h\Tr(KD),
\]
which gives the lower bound for the derivative. For the reverse bound, for each $h>0$ choose a self-adjoint contraction $K_h$ which norms the self-adjoint operator $B-hD$:
\[
 \Tr(K_h(B-hD))=\|B-hD\|_1.
\]
For example, we may take $K_h$ to be the sign of $B-hD$, with either sign on its kernel. On the closed span of the ranges of $B$ and $D$, a sequence $K_{h_j}$ with $h_j\to0^+$ has a weak star convergent subsequence by applying Banach-Alaoglu together with separability of that closed span of the ranges of $B$ and $D$. If its limit is $K$, then
\[
 \Tr(KB)=\lim_j\Tr(K_{h_j}B)=\Tr B,
\]
because $\|B-h_jD\|_1\to\|B\|_1$ and $h_j\Tr(K_{h_j}D)\to0$. Also $\Tr(K_{h_j}B)\leq\Tr B$, so
\[
 \frac{\|B-h_jD\|_1-\|B\|_1}{h_j}
 \leq-\Tr(K_{h_j}D)\to-\Tr(KD).
\]
We take a sequence which realizes the upper limit and obtain \eqref{eq:subdifferential}. In particular, the one sided derivative exists.

If $K$ occurs in \eqref{eq:subdifferential}, then $I-K\geq0$ and
\[
 0=\Tr B-\Tr(KB)
 =\Tr\bigl(B^{1/2}(I-K)B^{1/2}\bigr).
\]
The operator inside the trace is positive. A positive trace-class operator with trace zero is zero, so $(I-K)^{1/2}B^{1/2}=0$. The range of $B^{1/2}$ is dense in $s(B)H$, so $K=I$ on $s(B)H$. Since $K$ is self-adjoint, it has no off diagonal part between $s(B)H$ and $s(B)^\perp H$, and hence
\[
 K=I_{s(B)H}\oplus K_0,
 \qquad K_0=K_0^*,\quad \|K_0\|_\infty\leq1,
\]
on $s(B)H\oplus s(B)^\perp H$. The maximum in \eqref{eq:subdifferential} is attained at $K_0=-I$ and its value is
\[
 -\Tr(s(B)D)+\Tr(s(B)^\perp D)=\Tr D-2\Tr(s(B)D),
\]
which proves the lemma. \end{proof}

\subsection{From operator norm to trace norm}\label{sec:trace-rigidity}

We now prove Theorem~\ref{thm:trace-rigidity}. The defining inequality in the definition of an $\varepsilon$-isometry is a bound on distance comparisons, so we need to express the trace norm error through distance comparisons.

Lemma~\ref{lem:trace-busemann} gives, for fixed $B,D\in \C$,
\[
 \lim_{t\to\infty}\bigl(\|tB-D\|_1-t\Tr B\bigr)
 =\Tr D-2\Tr(RD)
 =\Tr((I-R)D(I-R))-\Tr(RDR), \quad R:=s(B).
\]
The last equality decomposes the trace over $H=RH\oplus(I-R)H$. Thus, after subtracting the growing term $t\Tr B$, the distance converges to a fixed number: the trace of $D$ outside the support of $B$, minus its trace inside that support.

This is useful because the trace norm is obtained by assigning a minus sign to negative eigenvalues. More precisely, if $X$ is self-adjoint and trace-class, and $R$ projects onto the closed span of its eigenvectors with negative eigenvalues, then
\[
 \Tr((I-R)X(I-R))-\Tr(RXR)=\|X\|_1.
\]
This tells us how to choose the supports of the distant operators: their signs should match those of the error we want to estimate.

The lemma applies to positive operators, but its limiting expression is linear in $D$. The proof below combines distance comparisons to bound this expression on the self-adjoint error. It uses two positive operators with complementary supports on the subspace under consideration, together with Lemma~\ref{lem:trace-scaling} to control their images under $S$.

\begin{proof}[Proof of Theorem~\ref{thm:trace-rigidity}] For $D,E\in\C$, let
\[
G:=(S(D)-S(E))-(D-E).
\]
We must prove that $\|G\|_1\leq3\varepsilon$. To do this, we will express $\|G\|_1$ as a trace $\Tr(JG)$ and estimate that trace using distances, to which the approximate isometry inequality applies.

Let $M$ be the closed linear span of the supports of $D,E,S(D)$, and $S(E)$. This subspace is separable, even when $H$ is not, because trace-class operators have separable ranges. All four operators vanish on $M^\perp$. If $H$ is separable, we may take $M=H$.

We define $J$ on $M$ to act as $+1$ on the positive spectral subspace of $G$ and as $-1$ on its negative spectral subspace. On $\ker G\cap M$, we let $J$ act as $+1$. This last choice makes $J$ a self-adjoint unitary without affecting the product $JG$. Multiplication by $J$ changes each negative eigenvalue of $G$ to its absolute value, so $JG=|G|$. Therefore
\begin{equation}
 \Tr(JG)=\|G\|_1.
 \label{eq:norming-symmetry}
\end{equation}

Let
\[
 P:=\frac{I_M+J}{2},\qquad Q:=\frac{I_M-J}{2}.
\]
These are the orthogonal projections onto the subspaces where $J$ acts as $+1$ and $-1$, respectively. In particular,
\[
 P+Q=I_M,\qquad J=P-Q.
\]

We now choose the directions of our two rays. In Lemma~\ref{lem:trace-busemann}, a direction with support $R$ gives the limiting expression $\Tr X-2\Tr(RX)$. For $X$ supported on $M$, the identities
\[
 I_M-2P=-J,\qquad I_M-2Q=J
\]
show that supports $P$ and $Q$ give $-\Tr(JX)$ and $\Tr(JX)$, respectively. Hence we will choose positive trace-class operators $A,C$, supported on $M$, such that
\begin{equation}
 s(A)=P,\qquad s(C)=Q.
 \label{eq:faithful-probes}
\end{equation}
If $P=0$, we put $A=0$, and if $Q=0$, we put $C=0$. To construct operators $A$ and $C$, recall that $M$ is separable, and so choose an orthonormal basis $(u_j)$ of $PM$, with
\[ A:=\sum_j2^{-j}u_j\otimes u_j. \]
The summability of the coefficients makes $A$ trace-class, and their positivity gives $s(A)=P$. We construct $C$ in the same way on $QM$.

For $n\geq1$, let
\[
X_n=S(nA),\qquad Y_n=S(nC),\qquad Z_n=X_n-Y_n.
\]
We choose a self-adjoint unitary $J_n$ on $H$ extending $\operatorname{sgn}(Z_n)$. Thus $J_nZ_n=|Z_n|$, and
\begin{equation}
\Tr(J_nZ_n)=\|Z_n\|_1.
 \label{eq:norming-Jn}
\end{equation}
Lemma \ref{lem:trace-scaling} gives
\begin{equation}
\frac{Z_n}{n}\to A-C\quad\hbox{in }S_1(H), \qquad \frac{\|Z_n\|_1}{n}\to a+c, \qquad a := \Tr A, \ c := \Tr C.
 \label{eq:Zn-limit}
\end{equation}
For the second limit, we used $\|A-C\|_1=\Tr A+\Tr C=a+c$, since $A$ and $C$ have orthogonal supports.

For a self-adjoint contraction $K$, the largest possible value of $\Tr(K(A-C))$ is $a+c$. To attain $a+c$, we need both $\Tr(KA)=a$ and $\Tr(KC)=-c$; equivalently,
\[ \Tr((I-K)A)=0,\qquad \Tr((I+K)C)=0. \]
Since $I-K$ and $I+K$ are positive, these equalities imply that $K$ acts as $+1$ (i.e. is the identity) on the support of $A$ and as $-1$ on the support of $C$. Thus $K$ agrees with $J$ on $M$.

We will show that $J_n$ approaches this largest value, then use the two nonnegative deficits to prove convergence of its traces against fixed trace-class operators.

Trace norm duality and \eqref{eq:norming-Jn} give
\begin{equation}
\left|\Tr(J_n(A-C))-\frac{\|Z_n\|_1}{n}\right| \leq\left\|A-C-\frac{Z_n}{n}\right\|_1\to0,
 \label{eq:weak-star-identification}
\end{equation}
so $\Tr(J_n(A-C))\to a+c$ by \eqref{eq:Zn-limit}.

We put $P_n=(I+J_n)/2$ and $Q_n=(I-J_n)/2$; these are orthogonal projections on $H$, and
\[
\begin{aligned}
a-\Tr(J_nA)
&=2\Tr\bigl(A^{1/2}Q_nA^{1/2}\bigr)
=2\|Q_nA^{1/2}\|_2^2,\\
c+\Tr(J_nC)
&=2\Tr\bigl(C^{1/2}P_nC^{1/2}\bigr)
=2\|P_nC^{1/2}\|_2^2.
\end{aligned}
\]
Both quantities are nonnegative, and their sum is $a+c-\Tr(J_n(A-C))\to0$. Hence
\begin{equation}
\|Q_nA^{1/2}\|_2\to0,\qquad\|P_nC^{1/2}\|_2\to0.
 \label{eq:Qn-to-zero}
\end{equation}
Equation~\eqref{eq:Qn-to-zero} implies $Q_nA^{1/2}x\to0$ for every $x\in H$. The range of $A^{1/2}$ is dense in $PM$, and $\|Q_n\|_\infty\leq1$. Hence approximating a vector of $PM$ by vectors in this range gives $Q_nx\to0$ for every $x\in PM$. Similarly, $P_nx\to0$ for every $x\in QM$. Thus, for $x\in M$,
\[
(J_n-J)x=-2Q_nPx+2P_nQx\to0.
\]
It follows that
\begin{equation}
\Tr(J_nX)\to\Tr(JX)
 \label{eq:test-SD}
\end{equation}
for every trace-class $X$ supported on $M$: first check this for rank one operators, then use finite rank approximation and $\|J_n\|_\infty=1$. In particular, this applies to $X=S(D)-S(E)$.

Since $\Tr(JG)=\|G\|_1$, it suffices to bound this pairing from above. We compare $S(D)$ and $S(E)$ through $Y_n$ and $X_n$:
\begin{align*}
r_n&:=\Tr(J_n(S(D)-S(E)))\\
 &=\Tr J_n\bigl((S(D)-Y_n)+(Y_n-X_n)+(X_n-S(E))\bigr)\\
 &\leq\|S(D)-Y_n\|_1-\|X_n-Y_n\|_1
       +\|X_n-S(E)\|_1\\
 &\leq\bigl(\|nC-D\|_1-nc\bigr)
       +\bigl(\|nA-E\|_1-na\bigr)+3\varepsilon.
\end{align*}
The middle term is exact because $J_n$ norms $X_n-Y_n$; the last inequality uses the approximate isometry property three times. Since $s(C)=Q$, $s(A)=P$, and $J=P-Q$ on $M$, Lemma~\ref{lem:trace-busemann} gives
\[
\|nC-D\|_1-nc\to\Tr(JD),\qquad
 \|nA-E\|_1-na\to-\Tr(JE).
\]
Using \eqref{eq:test-SD} and passing to the limit yields
\[
\|G\|_1
 =\Tr J\bigl((S(D)-S(E))-(D-E)\bigr)
 \leq3\varepsilon.
\]
This proves \eqref{eq:trace-difference}. \end{proof}

\subsection{Proof of the trace norm stability theorem}\label{sec:trace-main}

\begin{proof}[Proof of Theorem \ref{thm:trace-global}] The operator norm estimate of \cite[Theorem 3.15]{DLLZ} requires the surjective $\eta$-isometry $R$ to obey $R(0)=0$; it gives a unitary or antiunitary $U$ such that $\sup_{A\in \mathcal{C}}\|R(A)-UAU^*\|_\infty \le 11554\eta$.

We temporarily arrange this by swapping the values of $T$ at zero and at a preimage of zero. The modified map is standard and surjective, so \cite[Theorem~3.15]{DLLZ} provides a unitary or antiunitary $U$. Since the swap changes only two values, $U^*T(\cdot)U$ also has bounded operator norm error. We then apply the trace norm rigidity theorem to this conjugate of the original $\varepsilon$-isometry.

Let $P_0:=T(0), b:=\Tr P_0$. By surjectivity, we choose $X_0\in\C$ such that $T(X_0)=0$. If $P_0=0$, let $R=T$. If $P_0\neq0$, then $X_0\neq0$, and we define the map $R$ as follows,
\[
 R(0)=0,\qquad R(X_0)=P_0,\qquad
 R(A)=T(A)\quad(A\neq0,X_0).
\]
This map $R$ is obtained by interchanging the inputs $0$ and $X_0$ of $T$ (and equals $T$ when $P_0=0$); hence it has the same range as $T$, and $R(0)=0$. Moreover, $\|R(A)-T(A)\|_1\leq b$ for every $A$. The triangle inequality shows that $R$ is an $\eta$-isometry, where $ \eta=\varepsilon+2b.$ This size of $\eta$ will not enter the final estimate and is instead used only to obtain a finite operator norm bound.

By \cite[Theorem~3.15]{DLLZ}, there is a unitary or antiunitary $U$ such that
\begin{equation}
 M:=\sup_{A\in\C}\|U^*R(A)U-A\|_\infty<\infty.
 \label{eq:uniform-operator}
\end{equation}
We return to the original map and put
\[
 S(A)=U^*T(A)U.
\]
Since $\|T(A)-R(A)\|_1\leq b$, we have
\[
 \sup_{A\in\C}\|S(A)-A\|_\infty\leq b+M<\infty.
\]
Thus $S$ satisfies the hypothesis of Theorem \ref{thm:trace-rigidity}. It remains an $\varepsilon$-isometry, since conjugation by $U$ preserves trace distance.

We put $Q_0=S(0)=U^*P_0U$ so $\Tr Q_0=b$, and $S(X_0)=0$. Theorem \ref{thm:trace-rigidity}, first with $D=X_0$ and $E=0$, gives
\begin{equation}
 \Tr X_0+\Tr Q_0=\|-X_0-Q_0\|_1\leq3\varepsilon.
 \label{eq:two-basepoint-traces}
\end{equation}
For an arbitrary $A\in\C$, we may use either $E=0$ or $E=X_0$ in \eqref{eq:trace-difference}. These two choices give
\begin{align*}
 \|S(A)-A\|_1
 &\leq3\varepsilon+\Tr Q_0,\\
 \|S(A)-A\|_1
 &\leq3\varepsilon+\Tr X_0.
\end{align*}
As a result,
\[
 \|S(A)-A\|_1
 \leq3\varepsilon+\min\{\Tr Q_0,\Tr X_0\}
 \leq\frac92\varepsilon.
\]
We conjugate back and obtain \eqref{eq:trace-global}. If $T(0)=0$, then $Q_0=0$, and the first of the preceding two estimates gives the sharper bound $3\varepsilon$. Finally, for any $X\in\C$ with $T(X)=0$, Theorem \ref{thm:trace-rigidity}, applied to $D=X$ and $E=0$, finishes the proof of \eqref{eq}:
\[
 \Tr X+\Tr T(0)=\|-X-Q_0\|_1\leq3\varepsilon.
\]
\end{proof}

\begin{remark} Surjectivity is used above to choose a preimage $X_0$ of zero and to apply \cite[Theorem~3.15]{DLLZ} to $R$. The finite operator norm bound then allows us to apply Theorem \ref{thm:trace-rigidity} to the original $\varepsilon$-isometry after conjugation. \end{remark}

\begin{proof}[Proof of Corollary \ref{cor:trace-delta-surjective}] We use a surjective replacement only to identify the conjugation; the final error bound will come from applying Theorem~\ref{thm:trace-rigidity} to the original map. By \cite[Lemma~5.1]{DLL} (see also \cite[Lemma~2.7]{DLLZ}), there are finite constants $\alpha,\beta>0$ and a standard surjective $\alpha$-isometry $R:(\C,\done)\to(\C,\done)$ such that
\[
 \sup_{A\in\C}\|T(A)-R(A)\|_1\leq\beta.
\]
We apply \cite[Theorem~3.15]{DLLZ} to $R$ and obtain a unitary or antiunitary $U$ on $H$ such that
\[
 M:=\sup_{A\in\C}\|U^*R(A)U-A\|_\infty<\infty.
\]
Since the operator norm is bounded by the trace norm, the preceding estimates give
\[
 \sup_{A\in\C}\|S(A)-A\|_\infty
 \leq\beta+M<\infty, \qquad S(A):=U^*T(A)U.
\]
$S$ is a standard $\varepsilon$-isometry, so Theorem \ref{thm:trace-rigidity}, with $D=A$ and $E=0$, gives
\[
 \|T(A)-UAU^*\|_1=\|S(A)-A\|_1\leq3\varepsilon,
 \qquad A\in\C.
\]
\end{proof}

\section{State spaces in finite and infinite dimensions}\label{sec:spheres} The cone arguments use large scalar multiples, which are unavailable on $\St$. We extend the logarithmic spiral of \cite{CW} from pure states to all states. The phases depend on the logarithms of the diagonal entries, so entries of very different sizes can acquire very different phases. In the fidelity comparison of two states with diagonals $(p_i)$ and $(q_i)$, these changes are weighted by $\sqrt{p_iq_i}$. The estimates below use these weights to keep distance distortion small, while growing dimension allows the map to remain a fixed distance from every unitary or antiunitary conjugation.

Suppose that $H$ is infinite dimensional, fix a countably infinite orthonormal sequence $(e_i)$, and let $E_0$ be its orthogonal complement. For $\rho\in\St$, let
\[
 p_i=\langle\rho e_i,e_i\rangle,
\]
and, for $\tau\in\mathbb R$, define the diagonal unitary
\begin{equation}
 D_\tau(\rho)e_k=
 \begin{cases}
 p_k^{\,i\tau/2}e_k,&p_k>0,\\
 e_k,&p_k=0
 \end{cases}
 \label{eq:state-phase}
\end{equation}
where $p_k^{\,i\tau/2}$ means $\exp((i\tau/2)\log p_k)$, which has absolute value one so $D_\tau(\rho)$ is unitary. We let $D_\tau(\rho)$ be the identity on $E_0$ and put
\begin{equation}
 \Theta_\tau(\rho)=D_\tau(\rho)\rho D_\tau(\rho)^*.
 \label{eq:twist-map}
\end{equation}
Conjugation by this diagonal unitary does not change the numbers $p_i$,
\[
 D_{-\tau}(\Theta_\tau(\rho))=D_\tau(\rho)^*,
\]
and $\Theta_{-\tau}$ is the inverse of $\Theta_\tau$. Hence the map is bijective. Unitary conjugation also preserves rank, so $\Theta_\tau$ maps pure states, which are the rank one states, to pure states.

Fix $\tau>0$. For two states $\rho,\sigma$, a common unitary conjugation turns their images into $\rho$ and $W\sigma W^*$, where $W=D_\tau(\rho)^*D_\tau(\sigma)$. By \eqref{eq:fidelity-trace-norm}, the change in fidelity is bounded by $\|\rho^{1/2}(W-I)\sigma^{1/2}\|_1$. This explains the quantity in the following lemma.

\begin{lemma}\label{lem:common-error} For every $\tau>0$ and all $\rho,\sigma\in\St$,
\begin{equation}
 \|\rho^{1/2}(D_\tau(\rho)^*D_\tau(\sigma) - I)\sigma^{1/2}\|_1\leq2\tau.
 \label{eq:Gamma-bound}
\end{equation}
\end{lemma}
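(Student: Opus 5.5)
The plan is to exploit the fact that $W:=D_\tau(\rho)^*D_\tau(\sigma)$ is diagonal in the orthonormal system $(e_i)$ and equals the identity on $E_0$. This lets us expand the operator in \eqref{eq:Gamma-bound} as a sum of rank one operators and bound each term by the triangle inequality in $S_1(H)$. Write $p_k=\langle\rho e_k,e_k\rangle$ and $q_k=\langle\sigma e_k,e_k\rangle$. Let $w_k$ be the eigenvalue of $W$ at $e_k$. Then
\[
 W-I=\sum_k (w_k-1)\,e_k\otimes e_k,
\]
with the sum converging strongly and $W-I$ vanishing on $E_0$. Consequently
\[
 \rho^{1/2}(W-I)\sigma^{1/2}=\sum_k (w_k-1)\,(\rho^{1/2}e_k)\otimes(\sigma^{1/2}e_k).
\]
The trace norm of a rank one operator $x\otimes y$ is $\|x\|\,\|y\|$, and $\|\rho^{1/2}e_k\|^2=\langle\rho e_k,e_k\rangle=p_k$. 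So the $k$-th term has trace norm $|w_k-1|\sqrt{p_kq_k}$. Provided the resulting scalar series converges, the operator series converges absolutely in $S_1(H)$ to the same strong limit. It therefore suffices to show
\[
 \sum_k |w_k-1|\sqrt{p_kq_k}\le 2\tau .
\]

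Next I treat the indices where $p_k=0$ or $q_k=0$. In that case $\sqrt{p_kq_k}=0$: indeed $\rho^{1/2}e_k=0$ (respectively $\sigma^{1/2}e_k=0$). So these terms vanish, whatever convention defines the phase in \eqref{eq:state-phase}. When both $p_k,q_k>0$ we have
\[
 w_k=\exp\bigl(\tfrac{i\tau}{2}(\log q_k-\log p_k)\bigr),
 \qquad\text{hence}\qquad
 |w_k-1|\le\tfrac{\tau}{2}\,|\log(q_k/p_k)|,
\]
using $|e^{i\theta}-1|\le|\theta|$.

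The only genuine estimate, and the step I regard as the crux, is the scalar inequality
\[
 \sqrt{pq}\,|\log(q/p)|\le 2\max\{p,q\}\le 2(p+q).
\]
This inequality is what tames the logarithmic blow-up of the phases with the fidelity weights. To prove it, assume by symmetry that $p\ge q>0$ and put $x=\sqrt{q/p}\in(0,1]$. Then the left side equals $2p\,x\log(1/x)$. The elementary bound $\log(1/x)\le (1-x)/x$ gives $x\log(1/x)\le 1-x\le 1$, which yields the inequality.

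Summing, and using $\sum_k p_k\le\Tr\rho=1$ and $\sum_k q_k\le\Tr\sigma=1$, gives
\[
 \sum_k|w_k-1|\sqrt{p_kq_k}\le\frac{\tau}{2}\cdot 2\sum_k(p_k+q_k)\le 2\tau .
\]
This also justifies the absolute convergence used at the start, and proves \eqref{eq:Gamma-bound}.
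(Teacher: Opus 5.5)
Your proof is correct and follows essentially the same route as the paper. Both arguments expand $\rho^{1/2}(W-I)\sigma^{1/2}$ into rank-one terms of trace norm $\sqrt{p_kq_k}$, apply $|e^{i\theta}-1|\le|\theta|$, use a scalar inequality to control $\sqrt{pq}\,|\log(q/p)|$, and then sum against the diagonal entries. The only difference is in that scalar step. The paper uses $|\log x|\le|x-x^{-1}|$ to get $\sqrt{p_kq_k}\,|\log\sqrt{q_k/p_k}|\le|q_k-p_k|$, which is slightly sharper. Your cruder bound by $2\max\{p,q\}\le 2(p+q)$ still sums to the required $2\tau$.
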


\begin{proof} On the vector $e_i$, the operator $W:= D_\tau(\rho)^*D_\tau(\sigma)$ has diagonal entry
\[
 \exp\left(i\tau\log\sqrt{q_i/p_i}\right), \qquad q_i=\langle\sigma e_i,e_i\rangle
\]
when $p_iq_i>0$. Terms for which $p_iq_i=0$ make no contribution below. For the rank one operator occurring in the expansion of $\Gamma:=\rho^{1/2}(W-I)\sigma^{1/2}$, we have
\[
 \|\rho^{1/2}(e_i\otimes e_i)\sigma^{1/2}\|_1
 =\|\rho^{1/2}e_i\|\,\|\sigma^{1/2}e_i\|
 =\sqrt{p_iq_i}.
\]
By the triangle inequality one has
\begin{align*}
 \|\Gamma\|_1
 &\leq\sum_i\sqrt{p_iq_i}
 \left|\exp\left(i\tau\log\sqrt{q_i/p_i}\right)-1\right|\\
 &\leq\tau\sum_i\sqrt{p_iq_i}\left|\log\sqrt{q_i/p_i}\right|\\
 &\leq\tau\sum_i|q_i-p_i|\leq2\tau.
\end{align*}
In the second line we used $|e^{ix}-1|\leq|x|$. For $x>0$, $|\log x|\leq|x-x^{-1}|$: when $x\geq1$, this follows from $\log x\leq x-1\leq x-x^{-1}$, and the case $0<x<1$ follows after replacing $x$ by $x^{-1}$. We take $x=\sqrt{q_i/p_i}$ and obtain
\[
 \sqrt{p_iq_i}\left|\log\sqrt{q_i/p_i}\right|
 \leq|q_i-p_i|.
\]
Finally, $\sum_i p_i\leq\Tr\rho=1$ and $\sum_iq_i\leq\Tr\sigma=1$. The same estimates applied to the tails show that the finite partial sums converge to $\Gamma$ in trace norm, which justifies the expansion. \end{proof}

\begin{lemma}\label{lem:block-trace} If $A,B$ are states and $W$ is unitary, then
\begin{equation}
 \left|\|A-WBW^*\|_1-\|A-B\|_1\right|
 \leq4\sqrt{\|A^{1/2}(W-I)B^{1/2}\|_1}.
 \label{eq:block-trace-lemma}
\end{equation}
\end{lemma}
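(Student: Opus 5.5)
The plan is to compare the two trace distances through their restrictions to a spectral window of $A$. On that window the factor $A^{1/2}$ can be inverted at a controlled cost, and outside it everything is governed by traces. Put
\[
 \gamma:=\|A^{1/2}(W-I)B^{1/2}\|_1,\qquad C:=WBW^*.
\]
If $\gamma\geq1/4$ the claim is trivial, because both trace distances lie in $[0,2]$ and $4\sqrt\gamma\geq2$. So we assume $\gamma<1/4$.

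The first step is the sandwich identity
\[
 A^{1/2}CA^{1/2}-A^{1/2}BA^{1/2}
 =\Gamma\,B^{1/2}W^*A^{1/2}+A^{1/2}B^{1/2}\Gamma^*,
 \qquad \Gamma:=A^{1/2}(W-I)B^{1/2}.
\]
It follows by writing $WBW^*-B=(W-I)BW^*+B(W^*-I)$. Since $\|B^{1/2}W^*A^{1/2}\|_\infty\leq1$, this gives
\[
 \|A^{1/2}(C-B)A^{1/2}\|_1\leq2\gamma .
\]
The same bound holds with any contraction inserted between the factors. In particular, for a spectral projection $F$ of $A$, we get $\|A^{1/2}F(C-B)FA^{1/2}\|_1\leq2\gamma$. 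We also record the mixed version, obtained the same way: for every contraction $K$,
\[
 |\Tr(KA^{1/2}(W-I)B W^*)|\leq\gamma .
\]

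The second step localizes. Let $F=F_\eta$ be the spectral projection of $A$ for $[\eta,1]$, with $\eta>0$ to be chosen. On $FH$ we have $A^{-1/2}F$ with norm at most $\eta^{-1/2}$. A cruder but usable route is to write
\[
 F(C-B)F=A^{-1/2}F\bigl(A^{1/2}(C-B)A^{1/2}\bigr)FA^{-1/2},
\]
which gives only $\|F(C-B)F\|_1\leq2\gamma/\eta$. That is too lossy, so the better route is to test traces. Instead of bounding $F(C-B)F$ in norm, I will bound the relevant pairing. Choose $K$ as the sign of $A-C$ (respectively $A-B$), so that the difference of trace distances is controlled by $|\Tr K(C-B)|$ for a suitable self-adjoint contraction. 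This pairing splits into an $FKF$ part and parts involving $F^\perp$. The $FKF$ part is handled through the factorization above, using $\|FA^{-1/2}\|_\infty\leq\eta^{-1/2}$ only once on each side. Writing $C-B$ as $(W-I)BW^*+B(W^*-I)$ and moving one factor $A^{1/2}$ next to $(W-I)$ should give a bound of order $\gamma/\sqrt\eta$ rather than $\gamma/\eta$. The off-diagonal and $F^\perp$ parts are bounded with the tail estimate \eqref{eq:tail-estimate}, namely
\[
 \|X-FXF\|_1\leq2\sqrt{\Tr X\,\Tr(F^\perp X)}+\Tr(F^\perp X),
\]
applied to $X=A,B,C$. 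Here $\Tr(F^\perp A)$ is not automatically small, since $A$ may have many small eigenvalues. However, $A$ restricted to $F^\perp$ has operator norm below $\eta$, and its contribution cancels between the two distances up to the mismatch $|\Tr F^\perp C-\Tr F^\perp B|$. By the first step, $|\Tr F(C-B)|$ is again controlled by $\gamma/\sqrt\eta$.

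The last step optimizes over $\eta$. The resulting bound should have the form $c_1\gamma/\sqrt\eta+c_2\sqrt\eta$, and the choice $\eta\approx\gamma$ gives order $\sqrt\gamma$. Afterwards the constants must be tracked so that they total at most $4$, possibly after symmetrizing the roles of $A$ and $B$.

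The main obstacle is the second step, specifically controlling the part of $C-B$ near the small eigenvalues of $A$. There $\gamma$ gives no information, because $W$ may move $B$ arbitrarily in directions invisible to $A$. One must show that such motion changes $\|A-\cdot\|_1$ only through terms weighted by $A$, which are small. If the spectral cut loses too much, I would instead try a purification argument. That means viewing $A^{1/2}$ and $W B^{1/2}$, $B^{1/2}$ as amplitudes, bounding $\|A-C\|_1$ and $\|A-B\|_1$ via the polarization
\[
 A-C=\tfrac12\bigl[(X-Z)(X+Z)^*+(X+Z)(X-Z)^*\bigr],
\]
and comparing $Z=WB^{1/2}$ with $B^{1/2}$ through the Cauchy--Schwarz inequality \eqref{eq:schatten-holder}. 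The square root would then come from $\|\cdot\|_2\leq\sqrt{\|\cdot\|_1}$-type estimates, as in \eqref{eq:powers-stormer}.
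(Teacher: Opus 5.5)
\paragraph{There is a genuine gap.} You do not carry out the step you single out as the main obstacle, and the mechanism you propose for it is false. Your treatment of $F^\perp$ uses only $\|AF^\perp\|_\infty<\eta$ and the mismatch $|\Tr F^\perp C-\Tr F^\perp B|$, never $\gamma$, and no estimate of that kind can hold.

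Take orthonormal vectors $e_1,\dots,e_N,f_1,\dots,f_N$, put $A=B=N^{-1}\sum_i e_i\otimes e_i$, and let $W$ rotate each plane $\operatorname{span}\{e_i,f_i\}$ by an angle $\theta\in[0,\pi/2]$. Then $\gamma=1-\cos\theta$, $\|A-B\|_1=0$ and $\|A-WBW^*\|_1=2\sin\theta$.

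\begin{itemize}
\item For every $\eta>1/N$ one has $F=0$. So there is no $FKF$ part, no off-diagonal part, and the trace mismatch vanishes. Yet the difference of distances is $2\sin\theta$: it equals $2$ for $\theta=\pi/2$, and is about $2\sqrt{2\gamma}$ for small $\theta$.
\item Hence the $F^\perp$ part is not a cancelling error term. It is the whole lemma for states of small operator norm.
\item Contrary to your remark, $\gamma$ does carry the relevant information on $F^\perp$: in this example all of $\gamma$ comes from eigenvalues of $A$ below $\eta$.
\item If instead $\eta\leq1/N$, everything lies in $F=\sum_i e_i\otimes e_i$. But then your $FKF$ bound, of order $\gamma/\sqrt\eta\geq\gamma\sqrt N$, is unbounded as $N\to\infty$ with $\theta$ fixed.
\end{itemize}
So no choice of spectral cut gives a dimension-free estimate this way.

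\paragraph{The purification fallback fails for a related reason.} Comparing $WB^{1/2}$ with $B^{1/2}$ cannot work, because $\|(W-I)B^{1/2}\|_2$ is not controlled by $\gamma$. For $A=e\otimes e$, $B=x\otimes x$ with $x\perp e$, and $W$ exchanging $x$ with a unit vector $y\perp e,x$, one has $\gamma=0$ but $\|(W-I)B^{1/2}\|_2=\sqrt2$. Moreover, polarization only gives one-sided bounds such as $\|A-C\|_1\leq\|X-Z\|_2\|X+Z\|_2$, and two such upper bounds cannot be subtracted.

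\paragraph{What is missing.} You need an exact formula for both trace distances in which $W$ enters only through $A^{1/2}WB^{1/2}$. The paper obtains it as follows.
\begin{itemize}
\item Write $A-B=XJX^*$ and $A-WBW^*=X'JX'^*$, where $X=[A^{1/2},B^{1/2}]$ and $X'=[A^{1/2},WB^{1/2}]$ map $H\oplus H$ to $H$ and $J=\operatorname{diag}(I,-I)$. Your polarization identity is this factorization after a change of coordinates on $H\oplus H$.
\item Move to the other side of the polar decomposition: $\|A-B\|_1=\|G^{1/2}JG^{1/2}\|_1$ with $G=X^*X$, and similarly for $G'=X'^*X'$.
\item The Gram operators $G,G'$ share the diagonal blocks $A,B$. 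The off-diagonal blocks of $G-G'$ are $-A^{1/2}(W-I)B^{1/2}$ and its adjoint, so $\|G-G'\|_1=2\gamma$.
\item Put $R=G^{1/2}$ and $R'=G'^{1/2}$. Use the identity $RJR-R'JR'=(R-R')JR+R'J(R-R')$ and Cauchy--Schwarz with $\|R\|_2=\|R'\|_2=\sqrt2$. Then apply the Powers--St{\o}rmer inequality $\|R-R'\|_2^2\leq\|G-G'\|_1$.
\end{itemize}
This gives the bound $2\sqrt2\cdot\sqrt{2\gamma}=4\sqrt\gamma$, with no spectral cut at all.
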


\begin{proof} The previous estimate controls $A^{1/2}(W-I)B^{1/2}$. We place this operator in the off diagonal blocks of the difference of two positive Gram operators, so that the square root estimate \eqref{eq:powers-stormer} applies. Let us regard $X=[A^{1/2},B^{1/2}]$ and $X'=[A^{1/2},WB^{1/2}]$ as operators from $H\oplus H$ to $H$. For
\[
 J:=\begin{bmatrix}I&0\\0&-I\end{bmatrix},
\]
put
\[
 \begin{aligned}
 G&=X^*X=\begin{bmatrix}
 A&A^{1/2}B^{1/2}\\
 B^{1/2}A^{1/2}&B
 \end{bmatrix},\\
 G'&=X'^*X'=\begin{bmatrix}
 A&A^{1/2}WB^{1/2}\\
 B^{1/2}W^*A^{1/2}&B
 \end{bmatrix}.
 \end{aligned}
\]
Then
\[
 XJX^*=A-B,\qquad X'JX'^*=A-WBW^*.
\]
If $X=VG^{1/2}$ is the polar decomposition, then $XJX^*=VG^{1/2}JG^{1/2}V^*$. The middle operator $G^{1/2}JG^{1/2}$ is supported on the initial space of the partial isometry $V$, so multiplication by $V$ and $V^*$ does not change its trace norm. The same argument applies to $X'$, so
\[
 \|A-B\|_1=\|G^{1/2}JG^{1/2}\|_1,
 \qquad
 \|A-WBW^*\|_1=\|G'^{1/2}JG'^{1/2}\|_1.
\]
The identity
\[
 RJR-R'JR'=(R-R')JR+R'J(R-R'), \qquad R:=  G^{1/2}, R' := (G')^{1/2}
\]
and \eqref{eq:schatten-holder} show that the difference of the two trace norms is at most
\[
 (\|R\|_2+\|R'\|_2)\|R-R'\|_2
 =2\sqrt2 \|G^{1/2}-G'^{1/2}\|_2
\]
where $\|R\|_2^2=\Tr G=\Tr A+\Tr B=2$, and similarly for $R'$.

The square root estimate \eqref{eq:powers-stormer}, applied to the positive operators $G,G'$ on $H\oplus H$, gives
\[
 \|G^{1/2}-G'^{1/2}\|_2^2
 \leq\|G-G'\|_1.
\]
The diagonal blocks of $G-G'$ are zero, and its off diagonal blocks are $-A^{1/2}(W-I)B^{1/2}$ and its adjoint. A self-adjoint block operator $\left[\begin{smallmatrix}0&Z\\Z^*&0\end{smallmatrix}\right]$ has each singular value of $Z$ twice, so
\[
 \|G-G'\|_1
 =2\|A^{1/2}(W-I)B^{1/2}\|_1.
\]
Combining the last three estimates gets us \eqref{eq:block-trace-lemma}. \end{proof}

\begin{lemma}\label{lem:twist-distortion} For every $\tau>0$ and all $\rho,\sigma\in\St$,
\begin{align}
 \big|\db(\Theta_\tau\rho,\Theta_\tau\sigma)
        -\db(\rho,\sigma)\big|
 &\leq2\sqrt\tau,
 \label{eq:bures-twist-distortion}\\
 \big|\|\Theta_\tau\rho-\Theta_\tau\sigma\|_1
        -\|\rho-\sigma\|_1\big|
 &\leq4\sqrt{2\tau}.
 \label{eq:trace-twist-distortion}
\end{align}
\end{lemma}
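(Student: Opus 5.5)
First reduce both estimates to a comparison of $\rho$ and $W\sigma W^*$. Put $V=D_\tau(\rho)$ and $W=D_\tau(\rho)^*D_\tau(\sigma)$. Then
\[
\Theta_\tau\rho=V\rho V^*,\qquad \Theta_\tau\sigma=V(W\sigma W^*)V^*.
\]
Both metrics are invariant under a common unitary conjugation, since fidelity and trace norm are unitarily invariant. Hence
\[
\db(\Theta_\tau\rho,\Theta_\tau\sigma)=\db(\rho,W\sigma W^*),\qquad
\|\Theta_\tau\rho-\Theta_\tau\sigma\|_1=\|\rho-W\sigma W^*\|_1 .
\]
Write $\Gamma=\rho^{1/2}(W-I)\sigma^{1/2}$. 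Lemma~\ref{lem:common-error} gives $\|\Gamma\|_1\leq2\tau$.

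For the trace metric, apply Lemma~\ref{lem:block-trace} with $A=\rho$ and $B=\sigma$. This gives
\[
\bigl|\|\Theta_\tau\rho-\Theta_\tau\sigma\|_1-\|\rho-\sigma\|_1\bigr|\leq4\sqrt{\|\Gamma\|_1}\leq4\sqrt{2\tau},
\]
which is \eqref{eq:trace-twist-distortion}.

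For the Bures metric, first compare fidelities. The square root of $W\sigma W^*$ is $W\sigma^{1/2}W^*$. By \eqref{eq:fidelity-trace-norm} and unitary invariance of the trace norm,
\[
\Fid(\rho,W\sigma W^*)=\|\rho^{1/2}W\sigma^{1/2}W^*\|_1=\|\rho^{1/2}W\sigma^{1/2}\|_1 .
\]
The reverse triangle inequality then gives
\[
|\Fid(\rho,W\sigma W^*)-\Fid(\rho,\sigma)|\leq\|\rho^{1/2}(W-I)\sigma^{1/2}\|_1=\|\Gamma\|_1\leq2\tau .
\]
Since all traces equal $1$, we have $\db(\cdot,\cdot)^2=2-2\Fid$. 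So with $x=\db(\Theta_\tau\rho,\Theta_\tau\sigma)$ and $y=\db(\rho,\sigma)$, this yields $|x^2-y^2|\leq4\tau$. To pass from squares to the distances themselves, use the elementary inequality $|x-y|^2\leq|x-y|(x+y)=|x^2-y^2|$ for $x,y\geq0$. Thus $|x-y|\leq\sqrt{4\tau}=2\sqrt\tau$, which is \eqref{eq:bures-twist-distortion}.

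The only real content is already in Lemmas~\ref{lem:common-error} and~\ref{lem:block-trace}. The step that needs care is the unitary reduction, in particular checking that $W$ is the correct relative twist. The rest is elementary manipulation.
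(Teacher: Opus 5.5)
Your proof is correct and follows the paper's argument essentially verbatim. You use the same reduction by the common conjugation $X\mapsto D_\tau(\rho)^*XD_\tau(\rho)$ to the pair $(\rho,W\sigma W^*)$, the same reverse-triangle bound $|\Fid(\rho,W\sigma W^*)-\Fid(\rho,\sigma)|\leq\|\rho^{1/2}(W-I)\sigma^{1/2}\|_1\leq2\tau$ followed by passage from squared Bures distances to distances (your $|x-y|^2\leq|x^2-y^2|$ is the paper's $|\sqrt a-\sqrt b|\leq\sqrt{|a-b|}$), and the same direct application of Lemma~\ref{lem:block-trace} with Lemma~\ref{lem:common-error} for the trace metric.
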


\begin{proof} Set $U=D_\tau(\rho)$ and $V=D_\tau(\sigma)$, so that (by definition of $\Theta_\tau$) $\Theta_\tau\rho=U\rho U^*$ and $\Theta_\tau\sigma=V\sigma V^*$. Both distances are invariant under applying the same unitary conjugation to both states. Applying $X\mapsto U^*XU$ gives
\[
 (U^*\Theta_\tau(\rho)U,\,
  U^*\Theta_\tau(\sigma)U)
 =(\rho,\,U^*V\sigma V^*U)
 =(\rho,W\sigma W^*),
 \qquad W=U^*V.
\]
Thus, for either distance, we need to measure the effect of conjugating $\sigma$ by $W$.

We begin with the Bures estimate. One can compute that $(W\sigma W^*)^{1/2} =W\sigma^{1/2}W^*$; hence,
\[
 \Fid(\Theta_\tau\rho,\Theta_\tau\sigma)
 =\Fid(\rho,W\sigma W^*)
 =\|\rho^{1/2}W\sigma^{1/2}W^*\|_1
 =\|\rho^{1/2}W\sigma^{1/2}\|_1,
\]
where the last equality uses unitary invariance of the trace norm. The reverse triangle inequality now gives
\[
 \bigl|\Fid(\Theta_\tau\rho,\Theta_\tau\sigma)
            -\Fid(\rho,\sigma)\bigr|
 \leq\|\rho^{1/2}(W-I)\sigma^{1/2}\|_1.
\]

For states, \eqref{eq:bures-definition} reads $\db(\rho,\sigma)^2=2-2\Fid(\rho,\sigma)$. Hence the absolute difference of the squares of the two Bures distances is at most $4\tau$ by Lemma \ref{lem:common-error}. Since $|\sqrt x-\sqrt y|\leq\sqrt{|x-y|}$ for $x,y\geq0$, this proves \eqref{eq:bures-twist-distortion}.

For the trace metric, apply Lemma~\ref{lem:block-trace} with $A=\rho$, $B=\sigma$, and the present unitary $W$. Lemma \ref{lem:common-error} bounds its right side by $4\sqrt{2\tau}$, which proves \eqref{eq:trace-twist-distortion}. \end{proof}

We next show that these maps do not approach any exact isometry. Let $z=\sum_i z_ie_i$ be a unit vector and let $P_z=z\otimes z$ be the corresponding pure state. Its diagonal entries are $p_i=|z_i|^2$, and hence $D_\tau(P_z)z$ has coordinates $z_i|z_i|^{i\tau}$. Thus, on pure states supported on the closed span of $(e_k)$, the map in \eqref{eq:twist-map} is induced by
\begin{equation} (z_k)_k\mapsto\bigl(z_k|z_k|^{i\tau}\bigr)_k. \label{eq:spiral} \end{equation}
For $d\geq3$, take a unit vector with half its squared norm in one coordinate and half equally distributed over the other $d-1$ coordinates. The two coordinate magnitudes have ratio $\sqrt{d-1}$, so their phases under \eqref{eq:spiral} differ by $\frac{\tau}{2}\log(d-1)$. Making this difference $\pi$ sends the vector to an orthogonal one: the two halves of its inner product with the image cancel. To obtain this orthogonality, we choose
\begin{equation}
 \tau_d=\frac{2\pi}{\log(d-1)}.
 \label{eq:tau-d}
\end{equation}
Moving a vector far from itself only rules out approximation by the identity. To exclude every conjugation, we use the following lower bound against complex linear mappings. The parameter used by \cite{CW} is $2\tau_d$, so our choice gives $(2\tau_d)\log(d-1)=4\pi$.

Let $M_d$ denote the complex vector space of $d\times d$ matrices. For $z=(z_1,\ldots,z_d)\in\mathbb C^d$, define
\[
 g_\tau(z):=\bigl(z_j|z_j|^{i\tau}\bigr)_{j=1}^d,
\]
where the value of a zero coordinate is zero.

\begin{lemma}\label{lem:spiral-linear-gap} Let $d\geq3$ and $\tau=2\pi/\log(d-1)$. For every complex linear map $L:M_d\to M_d$, there is a unit vector $z\in\mathbb C^d$ such that
\begin{equation}
 \bigl\|L(z\otimes z)-g_\tau(z)\otimes g_\tau(z)\bigr\|_\infty
 \geq\frac12.
 \label{eq:spiral-linear-gap}
\end{equation}
\end{lemma}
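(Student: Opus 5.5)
The plan is to exploit complex linearity of $L$ by extracting one off-diagonal matrix unit direction from combinations of four pure states. The same direction is produced both by a family of vectors that the spiral $g_\tau$ fixes and by a family on which $g_\tau$ acts by a sign flip. Suppose, for contradiction, that $\eta:=\sup_{\|z\|=1}\|L(z\otimes z)-g_\tau(z)\otimes g_\tau(z)\|_\infty<\tfrac12$. Write $\mathbf 1=\sum_{j\geq2}e_j$.

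\emph{Step 1 (the four-state identity).} For a unit vector $x=be_1+cw$, with $b,c>0$ and $w$ supported on $e_2,\dots,e_d$, put $x_\theta=be_1+e^{i\theta}cw$ and $X=bc\,e_1\otimes w$ (the operator $e_1w^*$ scaled). I will check that $P_{x_\theta}-P_{x_{\theta+\pi}}=2(e^{-i\theta}X+e^{i\theta}X^*)$. It then follows that
\[
 4X=(P_{x_0}-P_{x_\pi})+i(P_{x_{\pi/2}}-P_{x_{3\pi/2}}).
\]
Applying $L$, and using that the coefficients have total modulus $4$, gives $\|L(X)-\tfrac14 Y\|_\infty\leq\eta$. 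Here $Y$ is the same combination of the projections $g_\tau(x_\theta)\otimes g_\tau(x_\theta)$.

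\emph{Step 2 (two test families).} First take the uniform vector, $b=c\sqrt{d-1}=1/\sqrt d$ with $w=\mathbf 1$. Since $x_\theta$ has all coordinates of equal modulus, $g_\tau(x_\theta)$ is a unimodular multiple of $x_\theta$. Hence $Y=4X_u$ with $X_u=\tfrac1d e_1\otimes\mathbf 1$, and $\|L(X_u)-X_u\|_\infty\leq\eta$. Second take the split vector, $b=1/\sqrt2$ and $c=1/\sqrt{2(d-1)}$, again with $w=\mathbf 1$. The coordinate moduli have ratio $\sqrt{d-1}$, so by the choice $\tau=2\pi/\log(d-1)$ the spiral multiplies the $e_1$ coordinate and the remaining coordinates by unimodular factors whose ratio is $-1$. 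Up to a global phase, which projections ignore, $g_\tau(x_\theta)\propto x_{\theta+\pi}$. Thus $Y=-4X_s$, with $X_s=\tfrac1{2\sqrt{d-1}}e_1\otimes\mathbf 1$, and $\|L(X_s)+X_s\|_\infty\leq\eta$.

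\emph{Step 3 (contradiction).} We have $X_s=\kappa X_u$ with $\kappa=\frac{d}{2\sqrt{d-1}}\cdot\frac1{\sqrt{d-1}}=\frac{d}{2(d-1)}\leq\frac34$ for $d\geq3$. Linearity then gives $\|L(X_s)-X_s\|_\infty\leq\kappa\eta$. Combining with $\|L(X_s)+X_s\|_\infty\leq\eta$ yields $2\|X_s\|_\infty\leq(1+\kappa)\eta$. Since $\|X_s\|_\infty=\frac{\sqrt{d-1}}{2\sqrt{d-1}}=\tfrac12$, this says $1\leq\tfrac74\eta$, so $\eta\geq\tfrac47>\tfrac12$. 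This contradicts the assumption, so some unit vector $z$ attains \eqref{eq:spiral-linear-gap}.

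The main obstacle is Step 2's phase bookkeeping. One must verify that the spiral's phase depends only on coordinate moduli, so that adding $e^{i\theta}$ to the tail leaves the relative phase shift unchanged. That shift is then $\frac\tau2\log(d-1)=\pi$ for the split family and $0$ for the uniform family, in both cases independently of $\theta$. The other point needing care is using complex linearity, not merely real linearity, to pass from the four-term combination to $X$.
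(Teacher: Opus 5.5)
Your Steps 1 and 2 are correct (up to a typo: the uniform vector needs $b=c=1/\sqrt d$, which is what your formula $X_u=\tfrac1d\,e_1\otimes\mathbf 1$ actually uses). Step 3, however, rests on a miscomputed constant, and the argument does not survive the correction. From $X_s=\frac{1}{2\sqrt{d-1}}\,e_1\otimes\mathbf 1$ and $X_u=\frac1d\,e_1\otimes\mathbf 1$ one gets $\kappa=\frac{d}{2\sqrt{d-1}}$; the extra factor $1/\sqrt{d-1}$ has no source. Since $d^2>4(d-1)$ for $d\geq3$, the true $\kappa$ exceeds $1$. This is forced, because $\|X_s\|_\infty=\tfrac12>\frac{\sqrt{d-1}}{d}=\|X_u\|_\infty$. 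With the correct value your two inequalities give only $\eta\geq\frac1{1+\kappa}=\frac{2\sqrt{d-1}}{d+2\sqrt{d-1}}$. This is strictly less than $\tfrac12$ for every $d\geq3$ and decays like $2/\sqrt d$, so it does not even give the dimension-free gap needed in Theorem~\ref{thm:sphere-instability}. A sanity check exposes the error: your conclusion $\eta\geq\tfrac47$ contradicts the example $L(A)=\tfrac12\Tr(A)I$, whose error is exactly $\tfrac12$. For this $L$ one has $L(X_s)=0$, so at $d=3$ your claimed bound $\|L(X_s)-X_s\|_\infty\leq\tfrac34\eta=\tfrac38$ fails, since the left side is $\tfrac12$.

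The defect is structural, not just arithmetic. Your four-state identity sees only the single block $e_1\otimes\mathbf 1$, and for an arbitrary $L$ nothing ties its action there to its action on other blocks. On this block the uniform state has norm $\sqrt{d-1}/d$, while the split state has the maximal norm $\tfrac12$; your bound is the harmonic mean of these two numbers. Rescaling within one block cannot help, since a fixed family and a flipped family with proportional blocks can never both have block norm $\tfrac12$.

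The missing idea is the paper's symmetrization: average $L$ over the group $G$ of diagonal unitaries times permutations. Since $g_\tau(Uz)=Ug_\tau(z)$ for $U\in G$, the averaged map $\widetilde L$ has error at most that of $L$, and equivariance forces $\widetilde L(A)=\alpha\Tr(A)I+\beta A+\gamma\operatorname{diag}(A)$. For the uniform vector $\psi$ one gets $\widetilde L(P_\psi)-P_\psi=(\alpha+\gamma/d)I+(\beta-1)P_\psi$, and the spread of its two eigenvalues gives $|\beta-1|/2$ at full strength. Testing $\widetilde L(P_\varphi)-P_{g_\tau(\varphi)}$ on $g_\tau(\varphi)$ and on $\varphi$ gives $|\beta+1|/2$, and together these give $\tfrac12$. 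In these terms your Step 2 reads $|\beta-1|\frac{\sqrt{d-1}}{d}\leq\eta$ and $|\beta+1|\tfrac12\leq\eta$. The first, off-diagonal-only inequality is too weak and must be replaced by the spectral bound on the whole of $P_\psi$.
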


\begin{proof} We adapt the averaging argument in \cite[proof of Theorem~3(ii)]{CW}, using an additional test vector to improve the lower bound from $1/3$ to the optimal value $1/2$. We write $P_z=z\otimes z$ and put
\[
 \delta(L)=\sup_{\|z\|=1}
 \|L(P_z)-P_{g_\tau(z)}\|_\infty.
\]
Let $G$ be the group of matrices $D\Pi$, where $D$ is diagonal and unitary, and $\Pi$ is a permutation matrix. We average uniformly over the permutations and over the $d$ phases in $D$ and we let $\widetilde L$ denote the averaged map
\[
 \widetilde L(A) := \int_G U L(U^*AU)U^*\,dU
\]
whose role will be to simplify the arbitrary linear map $L$ while preserving the lower-bound argument. (Averaging will give $\widetilde L$ enough symmetry so that it has only three unknown coefficients.) For $U\in G$, we have $g_\tau(Uz)=Ug_\tau(z)$. By the triangle inequality: for every unit vector $z$,
\begin{align*}
 \|\widetilde L(P_z)-P_{g_\tau(z)}\|_\infty
 &\leq\int_G
 \|U L(P_{U^*z})U^*-U P_{g_\tau(U^*z)}U^*\|_\infty\,dU\\
 &\leq\delta(L),
\end{align*}
hence $\delta(\widetilde L)\leq\delta(L)$.

We now determine the form of the averaged mapping $\widetilde L$. The averaging gives
\[
 \widetilde L(UAU^*)=U\widetilde L(A)U^*,
 \qquad U\in G,\quad A\in M_d.
\]
We first use this identity for diagonal unitaries. Let $E_{ij}$ denote the matrix units and $D_\theta:=\operatorname{diag}(e^{i\theta_1},\ldots,e^{i\theta_d})$; then conjugation of the unit by $D_\theta$ gives
\[
 D_\theta E_{ij}D_\theta^*
 =e^{i(\theta_i-\theta_j)}E_{ij}.
\]
For $i\ne j$, these phase functions are distinct and nonconstant.

Since $\widetilde L$ commutes with every such conjugation, it cannot mix off-diagonal matrix units, and it sends diagonal matrices to diagonal matrices. Permutation symmetry then makes the multipliers on the off-diagonal entries equal. On diagonal inputs, it makes the $i$th output entry a linear combination of the $i$th input entry and the sum of all the entries.

Thus there are $\alpha,\beta,\gamma\in\mathbb C$ such that
\begin{equation}
 \widetilde L(A)=\alpha\Tr(A)I+\beta A+\gamma\operatorname{diag}(A),
 \qquad A\in M_d.
 \label{eq:averaged-linear-map}
\end{equation}
Consider the unit vector
\[
 \varphi:=\frac1{\sqrt2}
 \left(1,\frac1{\sqrt{d-1}},\ldots,
                 \frac1{\sqrt{d-1}}\right),
 \qquad w=g_\tau(\varphi).
\]
The phase of the first coordinate of $w$ differs from the common phase of the other coordinates by $\pi$, from which we deduce $\langle\varphi,w\rangle=0$. Let $R:=P_\varphi$, $S:=P_w$, and
\[
 c :=\sum_{j=1}^d|\varphi_j|^4=\frac{d}{4(d-1)},
\]
and observe that the matrices $R$ and $S$ have the same diagonal. Testing $\widetilde L(R)-S$ on both $w$ and $\varphi$ gives
\begin{align}
 \delta(\widetilde L)
 &\geq\max\left\{
  \left|\alpha+\gamma c-1\right|,
  \left|\alpha+\beta+\gamma c\right|\right\}\\
 &\geq\frac{|\beta+1|}{2}.
 \label{eq:first-beta-bound}
\end{align}

Consider the unit vector
\[
 \psi:=d^{-1/2}(1,\ldots,1).
\]
All its coordinates have the same modulus, so $P_{g_\tau(\psi)}=P_\psi$. By \eqref{eq:averaged-linear-map},
\[
 \widetilde L(P_\psi)-P_\psi
 =(\alpha+\gamma/d)I+(\beta-1)P_\psi.
\]
Its eigenvalues on $\mathbb C\psi$ and $\psi^\perp$ differ by $\beta-1$, so
\begin{equation}
 \delta(\widetilde L)\geq\frac{|\beta-1|}{2}.
 \label{eq:second-beta-bound}
\end{equation}
Together with \eqref{eq:first-beta-bound}, this gives
\[
 \delta(L)\geq\delta(\widetilde L)
 \geq\frac12\max\{|\beta+1|,|\beta-1|\}
 \geq\frac12.
\]
The coefficient $1/2$ is optimal: the map $L(A)=\frac12\Tr(A)I$ has error $1/2$ for every rank one projection.

The function $g_\tau$ is continuous at zero coordinates as well, since the absolute value of $z_j|z_j|^{i\tau}$ is $|z_j|$. The unit sphere of $\mathbb C^d$ is compact, so the supremum in the definition of $\delta(L)$ is attained. \end{proof}

\begin{proof}[Proof of Theorem \ref{thm:sphere-instability}] We choose strictly increasing integers $d_n\geq3$ with $d_n\to\infty$, and put
\[
 \Theta_n=\Theta_{\tau_{d_n}},\qquad
 \varepsilon_n=8\sqrt{\frac{\pi}{\log(d_n-1)}}.
\]
For each $n$, we write $d=d_n$. Lemma \ref{lem:twist-distortion} gives the two errors
\begin{equation}
 \sqrt{\frac{8\pi}{\log(d-1)}}
 \quad\hbox{and}\quad
 8\sqrt{\frac{\pi}{\log(d-1)}}
 \label{eq:distortion-rates}
\end{equation}
for $\db$ and $\done$, respectively. These are obtained by substituting $\tau_d=2\pi/\log(d-1)$ into $2\sqrt\tau$ and $4\sqrt{2\tau}$. Both tend to zero, and the second one is $\varepsilon_n$, showing that each $\Theta_n$ is an $\varepsilon_n$-isometry for both metrics.

Let $U$ be a unitary or antiunitary on $H$ and $E_d:=\operatorname{span}\{e_1,\ldots,e_d\}$, with orthogonal projection $P_d$. If $U$ is unitary, set $L(A)=P_dUAU^*P_d$; if $U$ is antiunitary, let $K$ be complex conjugation in an orthonormal basis extending $(e_1,\ldots,e_d)$, and set $L(A)=P_d(UK)A^{\mathsf T}(UK)^*P_d$. In either case $L:M_d\to M_d$ is complex linear and $L(A)=P_dUAU^*P_d$ for every self-adjoint $A\in M_d$.

Applying Lemma \ref{lem:spiral-linear-gap} to the appropriate compressed map, we obtain a rank one projection $P$ supported on $E_d$ such that
\[
 \|P_d(\Theta_{\tau_d}(P)-UPU^*)P_d\|_\infty\geq\frac12;
\]
the same lower bound holds without $P_d$ since compressions do not increase the operator norm. We recall the two elementary formulas which convert this operator norm gap into the two metric gaps. If $P=x\otimes x$ and $Q=y\otimes y$ are rank one projections, where $x,y$ are unit vectors, then $\Fid(P,Q)=|\langle x,y\rangle|$. The possibly nonzero eigenvalues of $P-Q$ are $\pm\sqrt{1-|\langle x,y\rangle|^2}$. As a result,
\[
 \|P-Q\|_1=2\|P-Q\|_\infty,
 \qquad
 \db(P,Q)^2=2-2\sqrt{1-\|P-Q\|_\infty^2}.
\]
We apply these identities to $\Theta_{\tau_d}(P)$ and $UPU^*$. By the operator norm gap $1/2$, we obtain
\[
 \|\Theta_{\tau_d}(P)-UPU^*\|_1\geq1,
 \qquad
 \db(\Theta_{\tau_d}(P),UPU^*)
 \geq\sqrt{2-2\sqrt{1-\frac14}}
 =\sqrt{2-\sqrt3}.
\]
For every $U$ there is such a projection $P$, which proves \eqref{eq:fixed-bures-gap} and \eqref{eq:fixed-trace-gap}.

This proof has just shown instability on a fixed infinite-dimensional Hilbert space $H$, but this construction can be run afresh on $\mathbb C^d$ using its standard basis. The resulting maps have distortion $\varepsilon_d\to0$ for both metrics and their distance from every unitary or antiunitary conjugation remains at least $1$ and $\sqrt{2-\sqrt3}$ in trace and Bures distance respectively, which implies that no stability estimate tending to 0 with the distortion can hold uniformly in $d$.

\end{proof}

\begin{remark} The map in \eqref{eq:twist-map} extends to the cone by
\[
 \widehat\Theta_\tau(A)=
 \begin{cases}
  (\Tr A)\Theta_\tau(A/\Tr A),&A\neq0,\\
  0,&A=0.
 \end{cases}
\]
If $A$ and $B$ are replaced by $tA$ and $tB$, every trace distance error is multiplied by $t$, while every Bures distance error is multiplied by $\sqrt t$, so this extension cannot have a bounded additive error on the cone unless every error is zero. \end{remark}

The preceding examples rule out estimates which are independent of the dimension; however, in each fixed finite dimension, compactness gives a stability estimate.

\begin{proof}[Proof of Corollary \ref{cor:fixed-dimensional}] We will show that a sequence of surjective maps whose distortions tend to zero has a subsequence converging uniformly to an exact surjective isometry. Compactness gives convergent images on a dense set, and the distance inequality upgrades this convergence to uniform convergence.

If the corollary failed, there would be a $\delta>0$, numbers $\varepsilon_n\to0$, and surjective $\varepsilon_n$-isometries $T_n$ whose uniform distance from every map of the form $A\mapsto UAU^*$ is at least $\delta$. The state space is compact in finite dimension: it is closed and bounded in the finite dimensional space of self-adjoint matrices. The two metrics give the same topology by \eqref{eq:state-metric-comparison}.

We let $(x_k)$ be a countable dense subset of this compact state space. By compactness there is a subsequence for which $T_n(x_1)$ converges; we choose a further subsequence for which $T_n(x_2)$ also converges, and continue. The diagonal subsequence makes $T_n(x_k)$ convergent for every $k$. We define $T(x_k)=\lim_nT_n(x_k)$. For $j,k$,
\[
 \mathsf d(T(x_j),T(x_k))
 =\lim_n\mathsf d(T_n(x_j),T_n(x_k))
 =\mathsf d(x_j,x_k),
\]
because $\varepsilon_n\to0$. Thus $T$ is an isometry on the dense set; by taking limits, it extends uniquely to an isometry of the whole state space.

The convergence is uniform even though the maps $T_n$ need not be continuous. Indeed, we choose a finite $\eta$-net from the points $x_k$. For $x$ and a net point $z$ with $\mathsf d(x,z)<\eta$, the triangle inequality, the $\varepsilon_n$-isometry inequality, and the fact that $T$ is an isometry give
\[
 \mathsf d(T_nx,Tx)
 \leq 2\eta+\varepsilon_n+\mathsf d(T_nz,Tz).
\]
For fixed $\eta$, the last term tends to zero at every point of the finite net, and hence uniformly over that net. Taking the upper limit in $n$ and then letting $\eta\to0^+$ proves uniform convergence.

Let us check that the limit is surjective: fix a state $y$; by surjectivity, we choose $y_n$ with $T_n(y_n)=y$. Compactness gives a subsequence for which $y_n\to x$; along this subsequence,
\[
 \mathsf d(Tx,y)
 \leq\mathsf d(Tx,T_nx)+\mathsf d(T_nx,T_ny_n)
 \leq\mathsf d(Tx,T_nx)+\mathsf d(x,y_n)+\varepsilon_n\to0,
\]
that is $T(x)=y$. Theorem \ref{thm:MT} gives $T(A)=UAU^*$ for a unitary or antiunitary $U$, which contradicts the fixed uniform gap. \end{proof}

\begin{remark} Compactness gives a stability modulus but no quantitative rate. In particular, it does not establish a bound of the form $C_d\varepsilon$. Equation \eqref{eq:distortion-rates} shows that any such bound would require $C_d$ to grow at least as fast as $\sqrt{\log d}$ for both metrics. \end{remark} \section*{Acknowledgements}

I am grateful to Denny Leung for suggesting these stability questions, for several helpful discussions, and for pointing out the distinction between the positive cone and the state space. I also thank him for his encouragement and for drawing my attention to the broader context of isometries of positive trace-class operators, and for feedback and comments on an earlier version of this manuscript. AI assistance was used for literature search.


\begin{thebibliography}{99}

\bibitem{BJL} R.~Bhatia, T.~Jain and Y.~Lim, \emph{On the Bures-Wasserstein distance between positive definite matrices}, Expo. Math. \textbf{37} (2019), 165-191. \href{https://doi.org/10.1016/j.exmath.2018.01.002}{doi:10.1016/j.exmath.2018.01.002}.

\bibitem{BBI} D.~Burago, Y.~Burago and S.~Ivanov, \emph{A Course in Metric Geometry}, Graduate Studies in Mathematics, vol.~33, American Mathematical Society, Providence, RI, 2001. \href{https://doi.org/10.1090/gsm/033}{doi:10.1090/gsm/033}.

\bibitem{Busch} P.~Busch, \emph{Stochastic isometries in quantum mechanics}, Math. Phys. Anal. Geom. \textbf{2} (1999), 83-106. \href{https://doi.org/10.1023/A:1009822315406}{doi:10.1023/A:1009822315406}.

\bibitem{CD} L.~Cheng and Y.~Dong, \emph{A note on the stability of nonsurjective $\varepsilon$-isometries of Banach spaces}, Proc. Amer. Math. Soc. \textbf{148} (2020), 4837-4844. \href{https://doi.org/10.1090/proc/15110}{doi:10.1090/proc/15110}.

\bibitem{CDZ} L.~Cheng, Y.~Dong and W.~Zhang, \emph{On stability of nonlinear non-surjective $\varepsilon$-isometries of Banach spaces}, J. Funct. Anal. \textbf{264} (2013), 713-734. \href{https://doi.org/10.1016/j.jfa.2012.11.008}{doi:10.1016/j.jfa.2012.11.008}.

\bibitem{CTZ} L.~Cheng, K.~Tu and W.~Zhang, \emph{On weak stability of $\varepsilon$-isometries on wedges and its applications}, J. Math. Anal. Appl. \textbf{433} (2016), 1673-1689. \href{https://doi.org/10.1016/j.jmaa.2015.08.033}{doi:10.1016/j.jmaa.2015.08.033}.

\bibitem{CW} J.~Cuesta and M.~M.~Wolf, \emph{Are almost-symmetries almost linear?}, J. Math. Phys. \textbf{60} (2019), 082101. \href{https://doi.org/10.1063/1.5087539}{doi:10.1063/1.5087539}.

\bibitem{DLL} Y.~Dong, D.~H.~Leung and L.~Li, \emph{Stability of isometries between the positive cones of ordered Banach spaces}, Math. Ann. \textbf{389} (2024), 253-280. \href{https://doi.org/10.1007/s00208-023-02649-z}{doi:10.1007/s00208-023-02649-z}.

\bibitem{DLLZ} Y.~Dong, D.~H.~Leung, L.~Li and B.~Zhang, \emph{Stability of isometries between the positive cones of $L^1$ and trace-class operators}, Israel J. Math., to appear.

\bibitem{FvdG} C.~A.~Fuchs and J.~van de Graaf, \emph{Cryptographic distinguishability measures for quantum-mechanical states}, IEEE Trans. Inform. Theory \textbf{45} (1999), 1216-1227. \href{https://doi.org/10.1109/18.761271}{doi:10.1109/18.761271}.

\bibitem{Gevirtz} J.~Gevirtz, \emph{Stability of isometries on Banach spaces}, Proc. Amer. Math. Soc. \textbf{89} (1983), 633-636. \href{https://doi.org/10.1090/S0002-9939-1983-0718987-6}{doi:10.1090/S0002-9939-1983-0718987-6}.

\bibitem{Had} N.~Hadjisavvas, \emph{Metrics on the set of states of a $W^*$-algebra}, Linear Algebra Appl. \textbf{84} (1986), 281-287. \href{https://doi.org/10.1016/0024-3795(86)90320-4}{doi:10.1016/0024-3795(86)90320-4}.

\bibitem{HU} D.~H.~Hyers and S.~M.~Ulam, \emph{On approximate isometries}, Bull. Amer. Math. Soc. \textbf{51} (1945), 288-292. \href{https://doi.org/10.1090/S0002-9904-1945-08337-2}{doi:10.1090/S0002-9904-1945-08337-2}.

\bibitem{MZC} Z.~Ma, F.-L.~Zhang and J.-L.~Chen, \emph{Fidelity induced distance measures for quantum states}, Phys. Lett. A \textbf{373} (2009), 3407-3409. \href{https://doi.org/10.1016/j.physleta.2009.07.042}{doi:10.1016/j.physleta.2009.07.042}.

\bibitem{MT} L.~Moln\'ar and W.~Timmermann, \emph{Isometries of quantum states}, J. Phys. A: Math. Gen. \textbf{36} (2003), 267-273. \href{https://doi.org/10.1088/0305-4470/36/1/318}{doi:10.1088/0305-4470/36/1/318}.

\bibitem{OS} M.~Omladi\v{c} and P.~\v{S}emrl, \emph{On non linear perturbations of isometries}, Math. Ann. \textbf{303} (1995), 617-628. \href{https://doi.org/10.1007/BF01461008}{doi:10.1007/BF01461008}.

\bibitem{PS} R.~T.~Powers and E.~St{\o}rmer, \emph{Free states of the canonical anticommutation relations}, Comm. Math. Phys. \textbf{16} (1970), 1-33.

\bibitem{SV} P.~\v{S}emrl and J.~V\"ais\"al\"a, \emph{Nonsurjective nearisometries of Banach spaces}, J. Funct. Anal. \textbf{198} (2003), 268-278. \href{https://doi.org/10.1016/S0022-1236(02)00049-6}{doi:10.1016/S0022-1236(02)00049-6}.

\bibitem{Sun} L.~Sun, \emph{Hyers-Ulam stability of $\varepsilon$-isometries between the positive cones of $L^p$-spaces}, J. Math. Anal. Appl. \textbf{487} (2020), 124014. \href{https://doi.org/10.1016/j.jmaa.2020.124014}{doi:10.1016/j.jmaa.2020.124014}.

\bibitem{SSW} L.~Sun, Y.~Sun and S.~Wang, \emph{On perturbed isometries between the positive cones of certain continuous function spaces}, Results Math. \textbf{78} (2023), article 63. \href{https://doi.org/10.1007/s00025-023-01844-3}{doi:10.1007/s00025-023-01844-3}.

\bibitem{Uhlmann} A.~Uhlmann, \emph{The ``transition probability'' in the state space of a $^*$-algebra}, Rep. Math. Phys. \textbf{9} (1976), 273-279.

\bibitem{Vestfrid15} I.~A.~Vestfrid, \emph{Stability of almost surjective $\varepsilon$-isometries of Banach spaces}, J. Funct. Anal. \textbf{269} (2015), 2165-2170. \href{https://doi.org/10.1016/j.jfa.2015.04.009}{doi:10.1016/j.jfa.2015.04.009}.

\bibitem{Vestfrid23} I.~A.~Vestfrid, \emph{$\varepsilon$-isometries between the positive cones of continuous functions spaces}, Israel J. Math. \textbf{253} (2023), 989-1000. \href{https://doi.org/10.1007/s11856-022-2393-4}{doi:10.1007/s11856-022-2393-4}.

\end{thebibliography}
 \end{document}